\newcommand{\jmp}[1]{[\![#1]\!]}              % jump
\newcommand{\mvl}[1]{\{\!\!\{#1\}\!\!\}}      % mean value
\newtheorem{lemma}{Lemma}
\newtheorem{theorem}{Theorem}
\newtheorem{remark}{Remark}
\journal{Journal of Computational and Applied Mathematics}
\begin{document}
 
\begin{frontmatter}
\title{High-order explicit local time-stepping methods for damped wave equations\tnoteref{t1}}
\tnotetext[t1]{This work was supported by the Swiss National Science Foundation.}

\author{Marcus J.~Grote\corref{cor1}} \cortext[cor1]{Corresponding author: Tel. +41612673996, Fax +41612672695} \ead{Marcus.Grote@unibas.ch}
\author{Teodora Mitkova} \ead{Teodora.Mitkova@unibas.ch}

\address{Institute of Mathematics, University of Basel, Rheinsprung 21, 4051 Basel, Switzerland}

\begin{abstract}
Locally refined meshes impose severe stability constraints on explicit time-stepping methods for the numerical simulation of time dependent wave phenomena. 
Local time-stepping methods overcome that bottleneck by using smaller time-steps precisely where the smallest elements in the mesh are located.
Starting from classical Adams-Bashforth multi-step methods, local time-stepping methods of arbitrarily high order of accuracy are derived for damped wave equations. 
When combined with a finite element discretization in space with an essentially diagonal mass matrix, the resulting time-marching schemes are fully explicit 
and thus inherently parallel. Numerical experiments with continuous and discontinuous Galerkin finite element discretizations corroborate the expected rates of convergence and illustrate 
the usefulness of these local time-stepping methods.
\end{abstract}

\begin{keyword}
Time dependent waves \sep damped waves \sep finite element methods \sep mass lumping \sep discontinuous Galerkin methods \sep explicit time integration 
\sep local time-stepping 

\MSC 65N30
\end{keyword}
\end{frontmatter}

%%%%%%%%%%%%%%%%%%%%%%%%%%%%%%%%%%%%%%%%%%%%%%%%%%%%%%%%%%%%%%%%%%%%%%%%%%%%%%%%%%%%%%%%%%%%%%%%%%%%%%%%%%%%%%%%%%%%%%%%%%%
%%%%%%% Introduction (Section 1) %%%%%%%%%%%%%%%%%%%%%%%%%%%%%%%%%%%%%%%%%%%%%%%%%%%%%%%%%%%%%%%%%%%%%%%%%%%%%%%%%%%%%%%%%%
\section{Introduction}
Efficient numerical methods for the simulation of damped wave phenomena are of fundamental importance in acoustic, electromagnetic or seismic wave propagation.
In the presence of complex geometry, such as cracks, sharp corners or irregular material interfaces, standard finite difference methods generally become ineffective 
and cumbersome. In contrast, finite element methods (FEMs) easily handle unstructured meshes and local refinement. Moreover, their extension to high order
is straightforward, a key feature to keep numerical dispersion minimal.

The finite element Galerkin approximation of hyperbolic problems typically leads to a system of ordinary differential equations. However, if explicit time-stepping is 
subsequently employed, the mass matrix arising from the spatial discretization by standard conforming finite elements must be inverted at each time-step: a major drawback 
in terms of efficiency. To overcome that difficulty, various ``mass lumping'' techniques have been proposed, which effectively replace the mass matrix by a diagonal 
approximation. While straightforward for piecewise linear elements \cite{Ciar78,Hug00}, mass lumping techniques require special quadrature rules at higher order
to preserve the accuracy and guarantee numerical stability \cite{CJRT01,GT06}.

Discontinuous Galerkin (DG) methods offer an attractive and increasingly popular alternative for the spatial discretization of time-dependent hyperbolic problems 
\cite{CS89, RW03, AMM06, GSS06,GSS07, HW08}. Not only do they accommodate elements of various types and shapes, irregular non-matching grids, and even locally varying polynomial order, and hence 
offer greater flexibility in the mesh design. They also lead to a block-diagonal mass matrix, with block size equal to the number of degrees of freedom per element; 
in fact, for a judicious choice of (locally orthogonal) shape functions, the mass matrix is truly diagonal. Thus, when a spatial DG discretization is combined with 
explicit time integration, the resulting time-marching scheme will be truly explicit and inherently parallel.

In the presence of complex geometry, adaptivity and mesh refinement are certainly key for the efficient numerical simulation of wave phenomena. However, locally refined 
meshes impose severe stability constraints on explicit time-marching schemes, where the maximal time-step allowed by the CFL condition is dictated by the smallest elements 
in the mesh. When mesh refinement is restricted to a small region, the use of implicit methods, or a very small time-step in the entire computational domain, are very 
high a price to pay. To overcome this overly restricitve stability constraint, various local time-stepping (LTS) schemes \cite{CFJ03, CFJ06, DFFL10} were developed, 
which use either implicit time-stepping or explicit smaller time-steps, but only where the smallest elements in the mesh are located.

Since DG methods are inherently local, they are particularly well-suited for the development of explicit local time-stepping schemes \cite{HW08}. By combining the sympletic 
St\"ormer-Verlet method with a DG discretization, Piperno derived a symplectic LTS scheme for Maxwell's equations in a non-conducting medium \cite{Pip06}, which is explicit 
and second-order accurate. In \cite{MPFC08}, Montseny et al. combined a similar recursive integrator with discontinuous hexahedral elements. Starting from the so-called 
arbitrary high-order derivatives (ADER) DG approach, alternative explicit LTS methods for Maxwell's equations \cite{TDMS09} and for elastic wave equations \cite{DKT07} were 
proposed. In \cite{EJ10}, the LTS approach from Collino et al. \cite{CFJ03, CFJ06} was combined with a DG-FE discretization for the numerical solution of symmetric first-order hyperbolic systems. 
Based on energy conservation, that LTS approach is second-order and explicit inside the coarse and the fine mesh; at the interface, however, it nonetheless requires at 
every time-step the solution of a linear system. More recently, Constantinescu and Sandu devised multirate explicit methods for hyperbolic conservation laws, 
which are based on both Runge-Kutta and Adams-Bashforth schemes combined with a finite volume discretization \cite{CS07, CS09}. Again these multirate schemes are limited to 
second-order accuracy.

Starting from the standard leap-frog method, Diaz and Grote proposed energy conserving fully explicit LTS integrators of arbitrarily high accuracy for the classical wave 
equation \cite{DG09}; that approach was extended to Maxwell's equations in \cite{GM10} for non-conductive media. By blending the leap-frog and the Crank-Nicolson methods, 
a second-order LTS scheme was also derived there for (damped) electromagnetic waves in conducting media, yet  this approach cannot be readily extended beyond order 
two.

To achieve arbitrarily high accuracy in the presence of dissipation, while remaining fully explicit, we shall derive here explicit LTS methods for damped wave equations based 
on Adams-Bashforth (AB) multi-step schemes. They can also be interpreted as particular
approximations of exponential-Adams multistep methods \cite{HO11}. The rest of the paper is organized as follows. In section 2, we first recall the standard continuous and the symmetric 
interior penalty (IP) DG finite element discretizations of the second-order damped wave equation; there, we also rewrite the damped wave equation as a first-order hyperbolic system 
and recall its nodal DG formulation. Starting from the Adams-Bashforth multi-step schemes, we then derive in Section 3 a LTS approach of arbitrarily high accuracy. 
When combined with a finite element discretization in space with a diagonal mass matrix, the resulting time-marching schemes remain fully explicit. Finally in Section 4, 
we present numerical experiments in one and two space dimensions, which validate the theory and underpin both the stability properties and the usefulness of these 
Adams-Bashforth LTS schemes. 

%%%%%%%%%%%%%%%%%%%%%%%%%%%%%%%%%%%%%%%%%%%%%%%%%%%%%%%%%%%%%%%%%%%%%%%%%%%%%%%%%%%%%%%%%%%%%%%%%%%%%%%%%%%%%%%%%%%%%%%%%%%
%%%%%%% FE discretization of the problem (Section 2) %%%%%%%%%%%%%%%%%%%%%%%%%%%%%%%%%%%%%%%%%%%%%%%%%%%%%%%%%%%%%%%%%%%%%%
\section{Finite element discretizations for the damped wave equation}
We consider the scalar damped wave equation
\begin{equation} \label{model_eq_1} \begin{split}
u_{tt} + \sigma u_t - \nabla \cdot (c^2 \nabla u) & = f \quad \ \mbox{in } \ \Omega \times (0, T) \,,\\
u(\cdot, t) & = 0 \quad \ \mbox{on } \ \partial \Omega \times (0, T)\,, \\
u(\cdot, 0) = u_0\,, \  u_t(\cdot, 0) & = v_0 \quad \mbox{in } \Omega \,, \\
\end{split} \end{equation}
where $\Omega$ is a bounded domain in ${\mathbb R}^d$, $d = 1, 2, 3$. Here, $f\in L^2(0,T; L^2(\Omega))$ is a (known) source term,
while $u_0\in H^1_0(\Omega)$ and $v_0\in L^2(\Omega)$ are prescribed initial conditions. At the boundary, $\partial \Omega$, we impose 
a homogeneous Dirichlet boundary condition, for simplicity. The damping coefficient, $\sigma = \sigma(x)$, is assumed non-negative 
($\sigma \geq 0$) whereas the speed of propagation, $c = c(x)$, is piecewise smooth and strictly positive ($c(x) \geq c_0  > 0$).

We shall now discretize (\ref{model_eq_1}) in space by using any one of the following three distinct FE discretizations: 
continuous ($H^1$-conforming) finite elements with mass lumping, a symmetric IP-DG 
discretization, or a nodal DG method. Thus, we consider shape-regular 
meshes ${\mathcal T}_h$ that partition the domain $\Omega$ into disjoint elements $K$, such that 
$\overline \Omega = \cup_{K \in {\mathcal T}_h} \overline K$. The elements are triangles or quadrilaterals in two space dimensions, 
and tetrahedra or hexahedra in three dimensions, respectively. The diameter of element $K$ is denoted by $h_K$ and the mesh size, 
$h$, is given by $h = \max_{K \in {\mathcal T}_h} h_K$.

%%%%%%%%%%%%%%%%%%%%%%%%%%%%%%%%%%%%%%%%%%%%%%%%%%%%%%%%%%%%%%%%%%%%%%%%%%%%%%%%%%%%%%%%%%%%%%%%%%%%%%%%%%%%%%%%%%%%%%%%%%
\subsection{Continuous Galerkin formulation}
The continuous ($H^1$-conforming) Galerkin formulation of (\ref{model_eq_1}) starts from its weak formulation: 
find 
$u \in [0, T] \to H^1_0(\Omega)$ such that
\begin{equation} \label{weak_eq} \begin{split}
(u_{tt}, \varphi) + (\sigma u_t, \varphi) + (c \nabla u, c \nabla \varphi) & = (f, \varphi) 
\qquad \forall \ \varphi \in H_0^1(\Omega)\,, \quad t \in (0, T) \,, \\
u(\cdot, 0) = u_0 \,, \ u_t(\cdot, 0) & = v_0 \,,
\end{split} \end{equation}
where $(\cdot, \cdot)$ denotes the standard $L^2$ inner product over $\Omega$. It is well-known that (\ref{weak_eq}) is well-posed and has a unique 
solution \cite{LM72}.

For a given partition ${\mathcal T}_h$ of $\Omega$, assumed polygonal for simplicity, and an approximation order $\ell \geq 1$, we shall approximate the
solution $u(\cdot, t)$ of (\ref{weak_eq}) in the finite element space
$$
V^h := \left \{ \varphi \in H_0^1(\Omega) \ : \ \varphi|_{K} \in {\mathcal S}^{\ell}(K) \ \
\forall \ K \in {\mathcal T}_h \right \}\,,
$$
where ${\mathcal S}^{\ell}(K)$ is the space ${\mathcal P}^{\ell}(K)$ (for triangles or tetrahedra) or
${\mathcal Q}^{\ell}(K)$ (for quadrilaterals or hexahedra). Here, we consider the following semi-discrete Galerkin approximation of
(\ref{weak_eq}): find $u^h : [0, T] \to V^h$ such that
\begin{equation} \label{fe_eq} \begin{split}
(u_{tt}^h, \varphi) + (\sigma u_{t}^h, \varphi) + (c \nabla u^h, c \nabla \varphi) & = (f, \varphi) 
\qquad \forall \, \varphi \in V^h\,, \quad t \in (0, T) \,, \\
u^h(\cdot, 0) = {\Pi}_h u_0 \,, \ u^h_t(\cdot, 0) &= {\Pi}_h v_0 \,.
\end{split} \end{equation}
Here, ${\Pi}_h$ denotes the $L^2$-projection onto $V^h$.

The semi-discrete formulation (\ref{fe_eq}) is equivalent to the second-order system of ordinary differential equations
\begin{equation*} \begin{split}
{\mathbf M} \, \frac{d^2 {\mathbf U}}{d t^2}(t) + {\mathbf M}_{\sigma} \, \frac{d {\mathbf U}}{d t}(t) +
{\mathbf K} \, {\mathbf U}(t) & = {\mathbf F}(t)\,, \qquad t \in (0, T)\,, \\
{\mathbf M} \, {\mathbf U}(0) = u_0^h \,, \qquad {\mathbf M} \, \frac{d \mathbf U}{d t}(0) & = v_0^h \,.
\end{split} \end{equation*}
Here, ${\mathbf U}$ denotes the vector whose components are the coefficients of $u^h$ with respect to the finite element basis of
$V_h$, $\mathbf M$ the mass matrix, $\mathbf K$ the stiffness matrix, whereas  ${\mathbf M}_{\sigma}$ denotes
the mass matrix with weight $\sigma$. The matrix ${\mathbf M}$ is sparse, symmetric and positive
definite, whereas the matrices $\mathbf K$ and ${\mathbf M}_{\sigma}$ are sparse, symmetric and, in general, only positive semi-definite. In fact, $\mathbf K$ is positive definite, unless Neumann boundary conditions would be imposed
in (\ref{model_eq_1}) instead. Since we shall never need to invert $\mathbf K$, our derivation also applies to
the semi-definite case with purely Neumann boundary conditions.

Usually, the mass matrix ${\mathbf M}$ is not diagonal, yet needs to be inverted at every time-step of any explicit time integration scheme. 
To overcome this diffculty, various mass lumping techniques have been developed \cite{CJRT01, Coh02,CJT93,CJT94}, which essentially replace $\mathbf M$ 
with a diagonal approximation by computing the required integrals over each element $K$ with judicious quadrature rules that do not effect the 
spatial accuracy \cite{BD76}.

%%%%%%%%%%%%%%%%%%%%%%%%%%%%%%%%%%%%%%%%%%%%%%%%%%%%%%%%%%%%%%%%%%%%%%%%%%%%%%%%%%%%%%%%%%%%%%%%%%%%%%%%%%%%%%%%%%%%%%%%%%
\subsection{Interior penalty discontinuous Galerkin formulation}
Following \cite{GSS06} we briefly recall the symmetric interior penalty (IP) DG formulation of (\ref{model_eq_1}). For simplicity,
we assume in this section that the elements are triangles or quadrilaterals in two space dimensions and tetrahedra or hexahedra
in three dimensions, respectively. Generally, we allow for irregular ($k$-irregular) meshes with hanging nodes \cite{BPW09}. We denote 
by ${\mathcal E}^{\mathcal I}_h$ the set of all interior edges of ${\mathcal T}_h$, by ${\mathcal E}^{\mathcal B}_h$ the set of all 
boundary edges of ${\mathcal T}_h$, and set ${\mathcal E}_h = {\mathcal E}^{\mathcal I}_h \cup {\mathcal E}^{\mathcal B}_h$. Here, we 
generically refer to any element of ${\mathcal E}_h$ as an ``edge'', both in two and three space dimensions.

For a piecewise smooth function $\varphi$, we introduce the following trace operators. Let $e \in {\mathcal E}^{\mathcal I}_h$ be an
interior edge shared by two elements $K^+$ and $K^-$ with unit outward normal vectors ${\mathbf n}^{\pm}$, respectively. Denoting
by $v^{\pm}$ the trace of $v$ on $\partial K^\pm$ taken from within $K^\pm$, we define the jump and the average on $e$ by
$$
\jmp{\varphi} := \varphi^+ {\mathbf n}^+ + \varphi^-{\mathbf n}^-  \,, \qquad
\mvl{\varphi}:= (\varphi^+ + \varphi^-)/2\,.
$$
On every boundary edge $e \in {\mathcal E}^{\mathcal B}_h$, we set $\jmp{\varphi} := \varphi {\mathbf n}$ and 
$\mvl{\varphi} := \varphi$. Here, $\mathbf{n}$ is the outward unit normal to the domain boundary $\partial \Omega$.

For a piecewise smooth vector-valued function $\boldsymbol{\psi}$, we analogously define the average across interior faces by
$\mvl{\boldsymbol{\psi}}:=(\boldsymbol{\psi}^+ + \boldsymbol{\psi}^-)/2$, and on boundary faces we set 
$\mvl{\boldsymbol{\psi}}:=\boldsymbol{\psi}$. The jump of a
vector-valued function will not be used.  For a vector-valued function $\boldsymbol{\psi}$ with continuous normal components across a
face $e \in {\mathcal E}_h$, the trace identity
\begin{equation*}
\varphi^+\left( \mathbf{n}^{+} \cdot \boldsymbol{\psi}^+ \right) + \varphi^-\left( \mathbf{n}^{-} \cdot \boldsymbol{\psi}^- \right)
 = \jmp{\varphi} \cdot \mvl{\boldsymbol{\psi}} \quad \mbox{on } e\,,
\end{equation*}
immediately follows from the above definitions.

For a given partition ${\cal T}_h$ of $\Omega$ and an approximation order $\ell \geq 1$, we wish to approximate the solution 
$u(t, \cdot)$ of (\ref{model_eq_1}) in the finite element space
\begin{equation*}
V^h :=  \left \{ \varphi \in L^2(\Omega): \, \varphi|_K\in {\cal S}^{\ell}(K) ~~ \forall K\in {\cal T}_h \right \} \,,
\end{equation*}
where ${\cal S}^{\ell}(K)$ is the space ${\cal P}^{\ell}(K)$ of polynomials of total degree at most $\ell$ on $K$ if $K$ is a
triangle or a tetrahedra, or the space ${\cal Q}^{\ell}(K)$ of polynomials of degree at most $\ell$ in each variable on $K$ if $K$
is a quadrilateral or a hexahedral. 
Thus, we consider the following (semidiscrete) DG approximation of (\ref{model_eq_1}): find
$u^h : [0, T] \to V^h$ such that
\begin{equation} \label{dg_eq} \begin{split}
(u_{tt}^h, \varphi) + (\sigma u_{t}^h, \varphi) + a_h(u^h, \varphi) & = (f,\varphi) 
\qquad \forall \, \varphi \in V^h\,, \quad t \in (0, T) \,, \\
u^h(\cdot, 0) = {\Pi}_h u_0 \,, \ u^h_t(\cdot, 0) &= {\Pi}_h v_0 \,.
\end{split} \end{equation}
Here, ${\Pi}_h$ again denotes the $L^2$-projection onto $V^h$ whereas the DG bilinear form $a_h(\cdot, \cdot)$, defined on $V^h \times V^h$, is given by
\begin{equation} \label{dg_form} \begin{split}
a_{h}(u, \varphi) :=  & \sum_{K \in {\mathcal T}_h} \int_K c^2 \, \nabla u \cdot \nabla \varphi \,dx - 
\sum_{e \in {\mathcal E}_h} \int_{e} \jmp{u} \cdot \mvl{c^2 \, \nabla \varphi} \, dA \\
& -\sum_{e \in{\mathcal E}_h} \int_{e} \jmp{\varphi} \cdot \mvl{c^2 \, \nabla u} \, dA + 
\sum_{e \in {\mathcal E}_h} \int_{e}\, {\tt a} \, \jmp{u} \cdot \jmp{\varphi} \,dA \, .
\end{split} \end{equation}
The last three terms in (\ref{dg_form}) correspond to jump and flux terms at element boundaries; they vanish when 
$u, \varphi, \in H^1_0(\Omega)\cap H^{1+m}(\Omega)$ for $m > \frac{1}{2}$. Hence, the above semi-discrete DG formulation (\ref{dg_eq}) 
is consistent with the original continuous problem~(\ref{weak_eq}).

In (\ref{dg_form}) the function ${\tt a}$ penalizes the jumps of $u$ and $v$ over the faces of ${\mathcal T}_h$. To define it, we first
introduce the functions $\tt h$ and $\tt c$ by
$$
{\tt h} |_e = \left \{ \begin{array}{ll}
\min \{ h_{K^+}, h_{K^-} \}, & e \in {\mathcal E}_h^{\mathcal I}\,,\\[5pt] h_K, & e \in {\mathcal E}_h^{\mathcal B}\,,
\end{array} \right. \
{\tt c} |_e(x) = \left \{ \begin{array}{ll}
\max \{ c|_{K^+}(x), c|_{K^-}(x) \}, & e \in {\mathcal E}_h^{\mathcal I} \,,\\[5pt] c|_K(x), & e \in {\mathcal E}_h^{\mathcal B} \,.
\end{array} \right.
$$
Then, on each $e \in {\mathcal E}_h$, we set
\begin{equation} \label{param_alpha}
{\tt a} |_e := \alpha \, {\tt c}^2 {\tt h}^{-1}\,,
\end{equation}
where $\alpha$ is a positive parameter independent of the local mesh sizes and the coefficient $c$. There exists a threshold value
$\alpha_{min} > 0$, which depends only on the shape regularity of the mesh  and the approximation order $\ell$ such that for  
$\alpha \geq \alpha_{min}$ the DG bilinear form $a_h$ is coercive and, hence, the discretization stable \cite{ABCM01, AD10}. Throughout the 
rest of the paper we shall assume that $\alpha \geq \alpha_{min}$ so that the semi-discrete problem (\ref{dg_eq}) has 
a unique solution which converges with optimal order \cite{GSS06,GSS07,GSS08, GS09}. In \cite{GSS06,GS09}, a detailed convergence analysis 
and numerical study of the IP-DG method for (\ref{dg_form}) with $\sigma = 0$ was presented. In particular, optimal a-priori estimates 
in a DG-energy norm and the $L^2$-norm were derived. This theory immediately generalizes to the case $\sigma \geq 0$. For sufficiently 
smooth solutions, the IP-DG method (\ref{dg_form}) thus yields the optimal $L^2$-error estimate of order ${\mathcal O}(h^{\ell + 1})$.

The semi-discrete IP-DG formulation (\ref{dg_eq}) is equivalent to the second-order system of ordinary differential equations
\begin{equation*} \begin{split}
{\mathbf M} \, \frac{d^2 {\mathbf U}}{d t^2}(t) + {\mathbf M}_{\sigma} \, \frac{d {\mathbf U}}{d t}(t) +
{\mathbf K} \, {\mathbf U}(t) & = {\mathbf F}(t)\,, \qquad t \in (0, T)\,, \\
{\mathbf M} \, {\mathbf U}(0) = u_0^h \,, \qquad {\mathbf M} \, \frac{d \mathbf U}{d t}(0) & = v_0^h \,.
\end{split} \end{equation*}
Again, the mass matrix ${\mathbf M}$ is sparse, symmetric and positive definite. Yet because individual elements decouple, 
${\mathbf M}$ (and ${\mathbf M}_{\sigma}$) is block-diagonal with block size equal to the number of degrees of freedom per element. 
Thus, ${\mathbf M}$ can be inverted at very low computational cost. In fact, for a judicious choice of (locally orthogonal) shape functions, 
${\mathbf M}$ is truly diagonal.

%%%%%%%%%%%%%%%%%%%%%%%%%%%%%%%%%%%%%%%%%%%%%%%%%%%%%%%%%%%%%%%%%%%%%%%%%%%%%%%%%%%%%%%%%%%%%%%%%%%%%%%%%%%%%%%%%%%%%%%%%%
\subsection{Nodal discontinuous Galerkin formulation}
Finally, we briefly recall the nodal discontinuous Galerkin formulation from \cite{HW08} for the spatial discretization of (\ref{model_eq_1}) 
rewritten as a first-order system. To do so, we first let $v := u_t$, ${\mathbf w}:=-\nabla u$, and thus we rewrite (\ref{model_eq_1}) as the 
first-order hyperbolic system:
\begin{equation} \label{model_eq_tmp} \begin{split}
v_t + \sigma v + \nabla \cdot (c^2 {\mathbf w}) & = f \quad \quad \quad \mbox{in } \ \Omega \times (0, T) \,,\\
{\mathbf w}_t + \nabla v & = {\mathbf 0} \quad \quad \quad \mbox{in } \ \Omega \times (0, T) \,,\\
v(\cdot, t) &= 0 \quad \quad \quad  \mbox{on } \ \partial \Omega \times (0, T)\,, \\
v(\cdot, 0) = v_0\,, \ {\mathbf w}(\cdot, 0) & = - \nabla u_0  \quad \mbox{in } \Omega \,,
\end{split} \end{equation}
or in more compact notation as 
\begin{equation} \label{model_eq_2}
{\mathbf q}_t + {\mathbf \Sigma} \, {\mathbf q} + \nabla \cdot {\mathcal F}({\mathbf q}) = {\mathbf S} \,,
\end{equation}
with ${\mathbf q} = (v, {\mathbf w})^T$. Following \cite{HW08}, we now consider the following nodal DG formulation of (\ref{model_eq_2}): 
find ${\mathbf q}^h : [0, T] \to {\mathbf V}^h$ such that
\begin{equation} \label{dg1_pr}
({\mathbf q}_{t}^h, {\boldsymbol \psi}) + ({\mathbf \Sigma} \, {\mathbf q}^h, {\boldsymbol \psi}) + 
{\widetilde a}_h({\mathbf q}^h, {\boldsymbol \psi}) = ({\mathbf S}, {\boldsymbol \psi}) 
\qquad \forall \, {\boldsymbol \psi} \in {\mathbf V}^h\,, \quad t \in (0, T) \,.
\end{equation}
Here ${\mathbf V}^h$ denotes the finite element space
\begin{equation*}
{\mathbf  V}^h :=  \left \{ {\boldsymbol \psi} \in L^2(\Omega)^{d+1}: \, {\boldsymbol \psi}|_K\in {\cal S}^{\ell}(K)^{d+1} ~~ \forall K\in {\cal T}_h \right \}
\end{equation*}
for a given partition ${\cal T}_h$ of $\Omega$ and an approximation order $\ell \geq 1$. The nodal-DG bilinear form ${\widetilde a}_h(\cdot , \cdot)$ is defined on 
${\mathbf V}^h \times {\mathbf V}^h$ as
$$
{\widetilde a}_h({\mathbf q}, {\boldsymbol \psi}) := 
\sum_{K \in {\cal T}_h} \int_K \left ( \nabla \cdot {\mathcal F}({\mathbf q}) \right ) \cdot {\boldsymbol \psi} \, dx - 
\sum_{e \in {\mathcal E}_h} \int_e \left ( {\mathbf n} \cdot {\mathcal F}({\mathbf q}) - 
({\mathbf n} \cdot {\mathcal F}({\mathbf q}))^* \right ) \cdot {\boldsymbol \psi} \, dA\,,
$$
where $({\mathbf n} \cdot {\mathcal F}({\mathbf q}))^*$ is a suitably chosen numerical flux in the unit normal direction $\mathbf n$. The 
semi-discrete problem (\ref{dg1_pr}) has a unique solution, which converges with optimal order in the $L^2$-norm \cite{HW08}.

The semi-discrete nodal DG formulation (\ref{dg1_pr}) is equivalent to the first-order system of ordinary differential equations
\begin{equation} \label{sd_pr2}
{\mathbf M} \frac{d {\mathbf Q}}{dt}(t) + {\mathbf M}_\sigma \, {\mathbf Q}(t) + {\mathbf C} \, {\mathbf Q}(t) = {\mathbf F}(t) \,, 
\qquad t \in (0, T)\,.
\end{equation}
Here ${\mathbf Q}$ denotes the vector whose components are the coefficients of ${\mathbf q}^h$ with respect to the finite element basis of 
${\mathbf V}^h$ and $\mathbf C$ the DG stiffness matrix. Because the individual elements decouple, the mass matrices ${\mathbf M}$ and 
${\mathbf M}_{\sigma}$ are sparse, symmetric, positive semi-definite and block-diagonal; moreover, ${\mathbf M}$ is positive definite and can be inverted at 
very low computational cost.

%%%%%%%%%%%%%%%%%%%%%%%%%%%%%%%%%%%%%%%%%%%%%%%%%%%%%%%%%%%%%%%%%%%%%%%%%%%%%%%%%%%%%%%%%%%%%%%%%%%%%%%%%%%%%%%%%%%%%%%%%%%
%%%%%%% High-order LTS (Section 3) %%%%%%%%%%%%%%%%%%%%%%%%%%%%%%%%%%%%%%%%%%%%%%%%%%%%%%%%%%%%%%%%%%%%%%%%%%%%%%%%%%%%%%%%
\section{High-order explicit local time-stepping}
The $H^1$-conforming and the IP-DG finite element discretizations of (\ref{model_eq_1}) presented in Section 2 lead to the second-order system of 
differential equations
\begin{equation} \label{sdisc_eq1}
{\mathbf M} \, \frac{d^2 {\mathbf U}}{d t^2}(t) + {\mathbf M}_{\sigma} \, \frac{d {\mathbf U}}{d t}(t) + {\mathbf K} \,
{\mathbf U}(t) = {\mathbf 0}\,, \qquad t \in (0, T)\,,
\end{equation}
whereas the nodal DG discretization leads to the first-order system of differential equations
\begin{equation} \label{sdisc_eq4}
{\mathbf M} \frac{d {\mathbf Q}}{dt}(t) + {\mathbf M}_\sigma \, {\mathbf Q}(t) + {\mathbf C} \, {\mathbf Q}(t) = {\mathbf 0} \,, 
\qquad t \in (0, T)\,.
\end{equation}
In both (\ref{sdisc_eq1}) and (\ref{sdisc_eq4}) the mass matrix ${\mathbf M}$ is symmetric, positive definite and essentially diagonal; thus, ${\mathbf M}^{-1}$ or 
${\mathbf M}^{-\frac{1}{2}}$ can be computed explicitly at a negligible cost; for simplicity, we restrict ourselves here to the homogeneous case, 
i.e. ${\mathbf F}(t) = \mathbf{0}$.

If we multiply (\ref{sdisc_eq1}) by ${\mathbf M}^{-\frac{1}{2}}$, we obtain
\begin{equation} \label{sdisc_eq2}
\frac{d^2 {\mathbf z}}{d t^2}(t) + {\mathbf D} \, \frac{d {\mathbf z}}{d t}(t) + {\mathbf A} \, {\mathbf z}(t) = {\mathbf 0} \,,
\end{equation}
with ${\mathbf z}(t) = {\mathbf M}^{\frac{1}{2}} {\mathbf U}(t)$, ${\mathbf D} = {\mathbf M}^{-\frac{1}{2}} {\mathbf M}_{\sigma}
{\mathbf M}^{-\frac{1}{2}}$ and ${\mathbf A} = {\mathbf M}^{-\frac{1}{2}} {\mathbf K} {\mathbf M}^{-\frac{1}{2}}$. Note that
$\mathbf A$ is also sparse and symmetric positive semi-definite. Thus, we can rewrite (\ref{sdisc_eq2}) as a 
first-order problem of the form
\begin{equation} \label{sdisc_eq3}
\frac{d {\mathbf y}}{d t}(t) = {\mathbf B} {\mathbf y}(t)\,,
\end{equation} with
$$
{\mathbf y}(t) = \left ( {\mathbf z}(t), \frac{d {\mathbf z}}{d t}(t) \right )^T\,, \qquad {\mathbf B} = \left ( \begin{array}{rr}
{\mathbf 0} & {\mathbf I} \\ -{\mathbf A} & -{\mathbf D} \end{array} \right ) \,.
$$
Similarly, we can also rewrite (\ref{sdisc_eq4}) as in the form (\ref{sdisc_eq3}) with 
${\mathbf y}(t) = {\mathbf Q}(t)$ and ${\mathbf B} = {\mathbf M}^{-1}\left ( -{\mathbf M}_\sigma - {\mathbf C} \right )$. Hence all three distinct finite element 
discretizations from Section 2 lead to a semi-discrete system as in (\ref{sdisc_eq3}).

Starting from explicit multi-step Adams-Bashforth methods, we shall now derive explicit local time-stepping schemes of arbitrarily 
high accuracy for a general problem of the form
\begin{equation} \label{sd_mod_pr}
\frac{d {\mathbf y}}{d t}(t) = {\mathbf B} {\mathbf y}(t)\,,  \qquad t \in (0, T)\,.
\end{equation}

%%%%%%%%%%%%%%%%%%%%%%%%%%%%%%%%%%%%%%%%%%%%%%%%%%%%%%%%%%%%%%%%%%%%%%%%%%%%%%%%%%%%%%%%%%%%%%%%%%%%%%%%%%%%%%%%%%%%%%%%%%%
\subsection{Adams-Bashforth methods}
First, we briefly recall the construction of the classical $k$-step ($k$th-order) Adams-Bashforth method for the numerical solution of 
(\ref{sd_mod_pr}) \cite{HNW00}. Let $t_i = i \Delta t$ and ${\mathbf y}_n$, ${\mathbf y}_{n-1}$,..., ${\mathbf y}_{n-k+1}$ the numerical approximations 
to the exact solution ${\mathbf y}(t_n)$,..., ${\mathbf y}(t_{n-k+1})$. The solution of (\ref{sd_mod_pr}) satisfies
\begin{equation} \label{int_eq}
{\mathbf y}(t_n + \xi \Delta t) = {\mathbf y}(t_n) + \int_{t_n}^{t_n + \xi \Delta t} {\mathbf B} {\mathbf y}(t) \, dt \,, \qquad 0 < \xi \leq 1\,.
\end{equation}
We now replace the unknown solution ${\mathbf y}(t)$ under the integral in (\ref{int_eq}) by the interpolation polynomial $p(t)$ through the points 
$(t_i, {\mathbf y}_i)$, $i = n-k+1, \dots, n$. It is explicitly given in terms of backward differences 
$$
\nabla^0 {\mathbf y}_n = {\mathbf y}_n\,, \quad \nabla^{j+1} {\mathbf y}_n = \nabla^{j} {\mathbf y}_n - \nabla^{j} {\mathbf y}_{n-1}
$$
by
\begin{equation*}
p(t) = p(t_n + s \Delta t) = \sum_{j = 0}^{k-1}(-1)^j \left (\begin{array}{c} -s \\ j \end{array} \right ) \nabla^{j} {\mathbf y}_n \,.
\end{equation*}
Integration of (\ref{int_eq}) with ${\mathbf y}(t)$ replaced by $p(t)$ then yields the approximation ${\mathbf y}_{n+\xi}$ of 
${\mathbf y}(t_n + \xi \Delta t)$, $0 < \xi \leq 1$,
\begin{equation} \label{ab}
{\mathbf y}_{n+\xi} = {\mathbf y}_n + \Delta t {\mathbf B} \sum_{j=0}^{k-1} \gamma_j (\xi) \nabla^{j} {\mathbf y}_n\,,
\end{equation}
where the polynomials $\gamma_j(\xi)$ are defined as 
$$
\gamma_j(\xi) = (-1)^j \int_0^{\xi} \left ( \begin{array}{c} -s \\ j \end{array} \right ) \, ds \,.
$$
They are given in Table \ref{tab1} for $j \leq 3$. After expressing the backward differences in terms of ${\mathbf y}_{n-j}$ and setting $\xi = 1$ in 
(\ref{ab}), we recover the common form of the $k$-step Adams-Bashforth scheme \cite{HNW00}
\begin{equation} \label{stand_ab}
{\mathbf y}_{n+1} = {\mathbf y}_n + \Delta t {\mathbf B} \sum_{j=0}^{k-1} \alpha_j {\mathbf y}_{n-j} \,,
\end{equation}
where the coefficients $\alpha_j$, $j = 0,\dots, k-1$ for the second, third- and fourth-order ($k=2,3,4$) Adams-Bashforth schemes are given in Table
\ref{tab2}. For higher values of $k$ we refer to \cite{HNW00}.

\begin{table}[h!]
\begin{center}
\renewcommand{\arraystretch}{1.5}
\begin{tabular}{c|cccc} \hline
$j$ & 0 & 1 & 2 & 3  \\ \hline $\gamma_j(\xi)$ & $\xi$ &
$\frac{1}{2} \xi^2$ & $\frac{1}{6} \xi^3 + \frac{1}{4} \xi^2$ &
$\frac{1}{24} \xi^4 + \frac{1}{6} \xi^3 + \frac{1}{6} \xi^2$ \\
\hline
\end{tabular} 
\caption{Coefficients $\gamma_j(\xi)$ for the explicit Adams-Bashforth methods.} \label{tab1}
\end{center}
\end{table}
\begin{table}[ht!]
\begin{center}
\renewcommand{\arraystretch}{1.5}
\begin{tabular}{c|cccc} \hline
      & $\alpha_0$      & $\alpha_1$       & $\alpha_2$      & $\alpha_3$      \\ \hline
$k=2$ & $\frac{3}{2}$   & $-\frac{1}{2}$   & 0               & 0
\\ \hline $k=3$ & $\frac{23}{12}$ & $-\frac{16}{12}$ &
$\frac{5}{12}$  & 0               \\ \hline $k=4$ & $\frac{55}{24}$
& $-\frac{59}{24}$ & $\frac{37}{24}$ & $-\frac{9}{24}$ \\ \hline
\end{tabular} 
\caption{Coefficients for the $k$-th order Adams-Bashforth methods.} \label{tab2}
\end{center}
\end{table}

%%%%%%%%%%%%%%%%%%%%%%%%%%%%%%%%%%%%%%%%%%%%%%%%%%%%%%%%%%%%%%%%%%%%%%%%%%%%%%%%%%%%%%%%%%%%%%%%%%%%%%%%%%%%%%%%%%%%%%%%%%%
\subsection{Adams-Bashforth based LTS}
Starting from the classical Adams-Bashforth methods from Section 3.1, we shall now derive LTS schemes of arbitrarily high accuracy for (\ref{sd_mod_pr}), which allow arbitrarily 
small time-steps precisely where small elements in the spatial mesh are located. To do so, we first split the unknown vector ${\mathbf y}(t)$ in two parts
\begin{equation*}
{\mathbf y}(t) = ({\mathbf I} - {\mathbf P}) {\mathbf y}(t) + {\mathbf P} {\mathbf y}(t) = 
{\mathbf y}^{[\mbox{\scriptsize{coarse}}]}(t) + {\mathbf y}^{[\mbox{\scriptsize fine}]}(t)\,,
\end{equation*}
where the matrix $\mathbf P$ is diagonal. Its diagonal entries, equal to zero or one, identify the unknowns associated with the locally refined region, where smaller 
time-steps are needed. Hence $\mathbf P$ corresponds to a discrete partition of unity of the degrees of freedom associated with $V_h$ .

The exact solution of (\ref{sd_mod_pr}) again satisfies
\begin{equation} \label{tmp1}
{\mathbf y}(t_n + \xi \Delta t) = {\mathbf y}(t_n) + \int_{t_n}^{t_n + \xi \Delta t} {\mathbf B} 
\left ( {\mathbf y}^{[\mbox{\scriptsize{coarse}}]}(t) + {\mathbf y}^{[\mbox{\scriptsize fine}]}(t) \right ) \, 
dt \,, \qquad 0 < \xi \leq 1\,.
\end{equation}
Since we wish to use the standard $k$-step Adams-Bashforth method in the coarse region, we approximate the term in (\ref{tmp1}) that involve ${\mathbf
y}^{[\mbox{\scriptsize coarse}]}(t)$ as in (\ref{int_eq}), which yields
\begin{equation} \label{tmp2}
{\mathbf y}(t_n + \xi \Delta t) \approx {\mathbf y}_n + \Delta t \, {\mathbf B} 
({\mathbf I} - {\mathbf P}) \sum_{j=0}^{k-1} \gamma_j (\xi) \nabla^{j} {\mathbf y}_n + \int_{t_n}^{t_n + \xi \Delta t}
{\mathbf B}{\mathbf P} {\mathbf y}(t) \, dt \,.
\end{equation}
To circumvent the severe stability constraint due to the smallest elements associated with ${\mathbf y}^{[\mbox{\scriptsize fine}]}(t)$, we shall now treat 
${\mathbf y}^{[\mbox{\scriptsize fine}]}(t)$ differently from ${\mathbf y}^{[\mbox{\scriptsize coarse}]}(t)$ and instead we approximate the integrand in (\ref{tmp2}) as
\begin{equation*}
\int_{t_n}^{t_n + \xi \Delta t} {\mathbf B}{\mathbf P} {\mathbf y}(t) \, dt \approx 
\int_{0}^{\xi \Delta t} {\mathbf B}{\mathbf P} {\widetilde{\mathbf y}}(\tau) \, d \tau \,,
\end{equation*}
where ${\widetilde {\mathbf y}}(\tau)$ solves the differential
equation
\begin{equation} \label{mod_pr} \begin{split}
\frac{d {\widetilde {\mathbf y}}}{d \tau}(\tau) & = 
{\mathbf B}({\mathbf I} - {\mathbf P}) \sum_{j=0}^{k-1} {\widetilde \gamma}_j
\left ( \frac{\tau}{\Delta t} \right ) \nabla^{j} {\mathbf y}_n +
{\mathbf B}{\mathbf P} \, {\widetilde {\mathbf y}}(\tau) \,,\\
{\widetilde {\mathbf y}}(0) & = {\mathbf y}_n\,,
\end{split} \end{equation}
with coefficients 
\begin{equation} \label{coeff_gamma_lts}
{\widetilde \gamma}_j (\xi) = \frac{d}{d \xi} \gamma_j ({\xi}) = \frac{d}{d \xi} \left ( (-1)^j \int_0^\xi \binom{-s}{j} \, ds \right ) = 
(-1)^j \binom{-\xi}{j}\,.
\end{equation}
The polynomials ${\widetilde \gamma}_j (\xi)$ are given in Table \ref{tab3} for $j \leq 3$. 
\begin{table}[t!]
\begin{center}
\renewcommand{\arraystretch}{1.5}
\begin{tabular}{c|cccc} \hline
$j$ & 0 & 1 & 2 & 3  \\ \hline ${\widetilde \gamma}_j(\xi)$ & $1$ &
$\xi$ & $\frac{1}{2} \xi^2 + \frac{1}{2} \xi$ & $\frac{1}{6} \xi^3 +
\frac{1}{2} \xi^2 + \frac{1}{3} \xi$ \\ \hline
\end{tabular} 
\caption{The polynomial coefficients ${\widetilde \gamma_j}(\xi)$} \label{tab3}
\end{center}
\end{table}
Replacing ${\mathbf y}(t)$ by ${\widetilde {\mathbf y}}(\tau)$ in (\ref{tmp2}), we obtain
\begin{equation} \label{tmp4}
{\mathbf y}(t_n + \xi \Delta t) \approx {\mathbf y}_n + \Delta t \, {\mathbf B} 
({\mathbf I} - {\mathbf P}) \sum_{j=0}^{k-1} \gamma_j (\xi) \nabla^{j} {\mathbf y}_n + 
\int_{0}^{\xi \Delta t} {\mathbf B}{\mathbf P} {\widetilde{\mathbf y}}(\tau) \, d \tau \,.
\end{equation}
By considering (\ref{mod_pr}) in integrated form, we find that
\begin{equation} \label{tmp3} \begin{split}
{\widetilde {\mathbf y}}(\xi \Delta t) & = 
{\widetilde {\mathbf y}}(0) + {\mathbf B}({\mathbf I} - {\mathbf P}) 
\sum_{j=0}^{k-1} \left ( 
\int_0^{\xi \Delta t} {\widetilde \gamma}_j \left( \frac{\tau}{\Delta t} \right) \, d \tau \right ) 
\nabla^{j} {\mathbf y}_n 
+ \int_{0}^{\xi \Delta t} {\mathbf B}{\mathbf P} \, {\widetilde {\mathbf y}}(\tau) \, d \tau \\
& = {\mathbf y}_n + \Delta t \, {\mathbf B}({\mathbf I} - {\mathbf P}) 
\sum_{j=0}^{k-1}  \gamma_j (\xi) \nabla^{j} {\mathbf y}_n + 
\int_{0}^{\xi \Delta t} {\mathbf B}{\mathbf P} \, {\widetilde {\mathbf y}}(\tau) \, d \tau \,.
\end{split} \end{equation}
 From the comparison of (\ref{tmp4}) and (\ref{tmp3}) we infer that
$${\mathbf y}(t_n + \xi \Delta t) \approx {\widetilde {\mathbf y}}(\xi \Delta t) \,.$$
Thus to advance ${\mathbf y}(t_n)$ from $t_n$ to $t_n + \Delta t$, we shall evaluate $\widetilde{\mathbf y}(\Delta t)$ by 
solving (\ref{mod_pr}) on $[0, \Delta t]$ numerically. We solve (\ref{mod_pr}) until $\tau = \Delta t$ again with a $k$-step
Adams-Bashforth scheme, using a smaller time-step $\Delta \tau = \Delta t / p$, where $p$ denotes the ratio of local 
refinement. For $m = 0, \dots, p-1$ we then have
\begin{equation} \label{tmp5}
\widetilde{\mathbf y}_{(m+1)/p} = \widetilde{\mathbf y}_{m/p} +
\Delta \tau \, {\mathbf B}({\mathbf I} - {\mathbf P}) 
\sum_{\ell = 0}^{k-1} \alpha_{\ell} \sum_{j = 0}^{k-1} {\widetilde \gamma}_j \left ( \frac{m-\ell}{p} \right ) 
\nabla^{j} {\mathbf y}_n +
\Delta \tau \, {\mathbf B}{\mathbf P} \sum_{\ell = 0}^{k-1} \alpha_{\ell} {\widetilde{\mathbf y}}_{(m-l)/p} \,,
\end{equation}
where $\alpha_\ell$, $\ell = 0,\dots, k-1$ denote the coefficients of the classical $k$-step Adams-Bashforth scheme 
(see Table \ref{tab2}). Finally, after expressing the backward differences in terms of 
${\mathbf y}_{n-\ell}$, we find
\begin{equation} \label{lts_ab}
\widetilde{\mathbf y}_{(m+1)/p} = \widetilde{\mathbf y}_{m/p} +
\Delta \tau \, {\mathbf B}({\mathbf I} - {\mathbf P}) \sum_{\ell = 0}^{k-1} \beta_{m, \ell} \, {\mathbf y}_{n-\ell} + 
\Delta \tau \, {\mathbf B}{\mathbf P} \sum_{\ell = 0}^{k-1} \alpha_{\ell} \, {\widetilde{\mathbf y}}_{(m-l)/p} \,, \ \ m = 0, \dots, p-1,
\end{equation}
where the constant coefficients $\beta_{m, \ell}$, $m=0,\dots,p-1$, $\ell=
0,\dots,k-1$, satisfy
\begin{equation} \label{coeff_beta_lts}
\beta_{m, \ell} = \sum_{i=0}^{k-1} \alpha_i 
\sum_{j = \ell}^{k-1} (-1)^{\ell} \left ( \begin{array}{c} j \\ \ell \end{array} \right )
{\widetilde \gamma}_j \left ( \frac{m-i}{p} \right ) \,,
\end{equation}
with ${\widetilde \gamma}_j$ defined in (\ref{coeff_gamma_lts}).

In summary, the LTS-AB$k$($p$) algorithm computes ${\mathbf y}_{n+1} \simeq \mathbf y(t_n +
\Delta t)$, given $\mathbf y_n$, ${\mathbf y}_{n-1}$,..., ${\mathbf y}_{n-k+1}$, 
${\mathbf B}({\mathbf I} - {\mathbf P}) {\mathbf y}_{n-1}$,..., ${\mathbf B}({\mathbf I} - {\mathbf P}) {\mathbf y}_{n-k+1}$ 
and ${\mathbf P}{\mathbf y}_{n-1/p}$, ${\mathbf P}{\mathbf y}_{n-2/p}$,..., ${\mathbf P}{\mathbf y}_{n-(k-1)/p}$ as follows:

\bigskip

\noindent {\bf LTS-AB$k$($p$) Algorithm}{\it 
\begin{enumerate}
\item Set ${\widetilde{\mathbf y}}_0:= \mathbf y_n$, ${\widetilde{\mathbf y}}_{-\ell/p}:= {\mathbf P}{\mathbf y}_{n-\ell/p}$, 
$\ell = 1, \dots, k-1$.
\item Set ${\mathbf w}_{n-\ell} := {\mathbf B}({\mathbf I} - {\mathbf P}) {\mathbf y}_{n-\ell}$, $\ell = 1, \dots, k-1$.
\item Compute ${\mathbf w}_{n} := {\mathbf B}({\mathbf I} - {\mathbf P}) {\mathbf y}_{n}$.
\item For $m = 0, \dots, p-1$, compute
\begin{equation*}
\widetilde{\mathbf y}_{(m+1)/p} := \widetilde{\mathbf y}_{m/p} + \frac{\Delta t}{p} 
\sum_{\ell = 0}^{k-1} \beta_{m, \ell} \, {\mathbf w}_{n-\ell} + 
\frac{\Delta t}{p} {\mathbf B}{\mathbf P} \sum_{\ell = 0}^{k-1} \alpha_{\ell} \, {\widetilde{\mathbf y}}_{(m-l)/p} \,.
\end{equation*}
\item Set ${\mathbf y}_{n+1} := \widetilde{\mathbf y}_1$.
\end{enumerate}}

Steps 1-4 correspond to the numerical solution of (\ref{mod_pr}) until $\tau = \Delta t$ with the $k$-step 
Adams-Bashforth scheme, using the local time-step $\Delta \tau = \Delta t / p$. For $\mathbf P = \mathbf 0$ or $p=1$, that is without 
any local time-stepping, we thus recover the standard $k$-step Adams-Bashforth scheme. If the fraction of nonzero entries in
$\mathbf P$ is small, the overall cost is dominated by the computation of ${\mathbf w}_{n}$ in Step 3,  which requires one 
multiplications by ${\mathbf B}({\mathbf I} - {\mathbf P})$ per time-step $\Delta t$. All further matrix-vector 
multiplications by ${\mathbf B} {\mathbf P}$ only affect those unknowns that lie inside the refined region, or immediately 
next to it; hence, their computational cost remains negligible as long as the locally refined region contains a small part of $\Omega$.

%%%%%%%%%%%%%%%%%%%%%%%%%%%%%%%%%%%%%%%%%%%%%%%%%%%%%%%%%%%%%%%%%%%%%%%%%%%%%%%%%%%%%%%%%%%%%%%%%%%%%%%%%%%%%%%%%%%%%%%%%%%
\subsection{Examples of LTS-AB$k$($p$) schemes}
We have shown above how to derive LTS-AB$k$($p$) schemes of arbitrarily high accuracy. Since the third- and fourth-order LTS-AB$k$($p$) 
schemes are probably the most relevant for applications, we now describe in detail the LTS-AB$k$($p$) schemes for $k=3$, 4 and $p=2$.

For $k=3$ and $p=2$, we find from (\ref{tmp5}) and (\ref{lts_ab}) for the first half-step of size $\Delta \tau = \Delta t / 2$ that 
\begin{equation*} \begin{split}
\widetilde{\mathbf y}_{1/2} & = {\widetilde{\mathbf y}}_0 +
\frac{\Delta t}{2} {\mathbf B}({\mathbf I} - {\mathbf P}) \Bigl [
\alpha_0 \left ( \widetilde{\gamma}_0(0) \nabla^0 {\mathbf y}_n + \widetilde{\gamma}_1(0) \nabla^1 {\mathbf y}_n 
+ \widetilde{\gamma}_2(0) \nabla^2 {\mathbf y}_n \right )  \\
& \qquad \qquad \qquad \qquad \ + \alpha_1 \left ( \widetilde{\gamma}_0(-1/2) \nabla^0 {\mathbf y}_n + 
\widetilde{\gamma}_1(-1/2) \nabla^1 {\mathbf y}_n 
+ \widetilde{\gamma}_2(-1/2) \nabla^2 {\mathbf y}_n \right ) \\
& \qquad \qquad \qquad \qquad \ + \alpha_2 \left ( \widetilde{\gamma}_0(-1) \nabla^0 {\mathbf y}_n + 
\widetilde{\gamma}_1(-1) \nabla^1 {\mathbf y}_n 
+ \widetilde{\gamma}_2(-1) \nabla^2 {\mathbf y}_n \right ) \Bigr ] \\
& \qquad \ \ + \frac{\Delta t}{2}{\mathbf B}{\mathbf P} \Bigl [ \alpha_0 {\widetilde{\mathbf y}}_0 +
\alpha_1 {\widetilde{\mathbf y}}_{-1/2} + \alpha_2 {\widetilde{\mathbf y}}_{-1} \Bigr ] \\
%%%
& = {{\mathbf y}}_n + \frac{\Delta t}{2} {\mathbf B}({\mathbf I} - {\mathbf P}) \Bigl [ 
{\beta_{0, 0}} {\mathbf y}_n + {\beta_{0, 1}} {\mathbf y}_{n-1} + {\beta_{0, 2}} {\mathbf y}_{n-2} \Bigl ] 
+ \frac{\Delta t}{2}{\mathbf B}{\mathbf P} \Bigl [ \alpha_0 {{\mathbf y}}_n + 
\alpha_1 {\widetilde{\mathbf y}}_{-1/2} + \alpha_2 {{\mathbf y}}_{n-1} \Bigr ]
\end{split} \end{equation*}
with
\begin{equation*} \begin{split}
\beta_{0, 0} & = \alpha_0 \left \{ \widetilde{\gamma}_0(0) + \widetilde{\gamma}_1(0) + \widetilde{\gamma}_2(0) \right \} +
                 \alpha_1 \left \{ \widetilde{\gamma}_0(-1/2) + \widetilde{\gamma}_1(-1/2) + \widetilde{\gamma}_2(-1/2) \right \} \\
                 & + \alpha_2 \left \{ \widetilde{\gamma}_0(-1) + \widetilde{\gamma}_1(-1) + \widetilde{\gamma}_2(-1) \right \}\\
             & = \frac{23}{12} \left \{ 1 + 0 + 0 \right \} - \frac{16}{12} \left \{ 1 - \frac{1}{2} - \frac{1}{8}\right \} 
               + \frac{5}{12} \left \{ 1 - 1 + 0 \right \} = \frac{17}{12} \,, \\
\beta_{0, 1} & = \alpha_0 \left \{ - \widetilde{\gamma}_1(0) -2 \widetilde{\gamma}_2(0) \right \} +
                 \alpha_1 \left \{ - \widetilde{\gamma}_1(-1/2) -2 \widetilde{\gamma}_2(-1/2) \right \} 
             + \alpha_2 \left \{ - \widetilde{\gamma}_1(-1) -2 \widetilde{\gamma}_2(-1) \right \} \\
             & = \frac{23}{12} \left \{ 0 + 0 \right \} - \frac{16}{12} \left \{ \frac{1}{2} + \frac{1}{4}\right \} 
               + \frac{5}{12} \left \{ 1 + 0 \right \} = -\frac{7}{12} \,, \\
\beta_{0, 2} & = \alpha_0  \widetilde{\gamma}_2(0) + \alpha_1 \widetilde{\gamma}_2(-1/2) + \alpha_2  \widetilde{\gamma}_2(-1)
               = \frac{23}{12} \cdot 0 - \frac{16}{12} \left \{ -\frac{1}{8} \right \} + \frac{5}{12} \cdot 0 = \frac{2}{12} \,.
\end{split} \end{equation*}
\begin{figure}[t!]
\centerline{\includegraphics[scale=0.7]{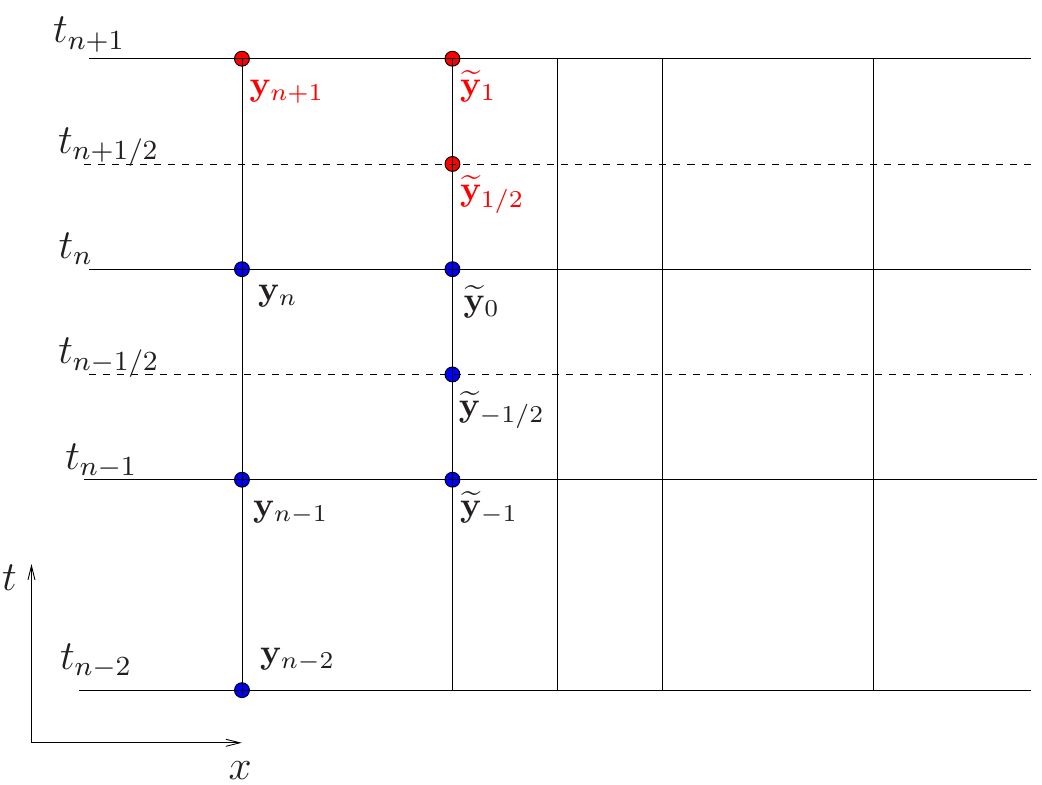}}  
\caption{Illustration of the LTS-AB$3$($2$) scheme in one space dimension.} \label{fig_lts32}
\end{figure}
Next, we perform a second half-step thus completing the full step of size $\Delta t$:
\begin{equation*} \begin{split}
\widetilde{\mathbf y}_{1} & = {\widetilde{\mathbf y}}_{1/2} +
\frac{\Delta t}{2} {\mathbf B}({\mathbf I} - {\mathbf P}) \Bigl [
\alpha_0 \left ( \widetilde{\gamma}_0(1/2) \nabla^0 {\mathbf y}_n + \widetilde{\gamma}_1(1/2) \nabla^1 {\mathbf y}_n 
+ \widetilde{\gamma}_2(1/2) \nabla^2 {\mathbf y}_n \right )  \\
& \qquad \qquad \qquad \qquad \ + \alpha_1 \left ( \widetilde{\gamma}_0(0) \nabla^0 {\mathbf y}_n + \widetilde{\gamma}_1(0) \nabla^1 {\mathbf y}_n 
+ \widetilde{\gamma}_2(0) \nabla^2 {\mathbf y}_n \right ) \\
& \qquad \qquad \qquad \qquad \ + \alpha_2 \left ( \widetilde{\gamma}_0(-1/2) \nabla^0 {\mathbf y}_n + \widetilde{\gamma}_1(-1/2) \nabla^1 {\mathbf y}_n 
+ \widetilde{\gamma}_2(-1/2) \nabla^2 {\mathbf y}_n \right ) \Bigr ] \\
& \qquad \ \ + \frac{\Delta t}{2}{\mathbf B}{\mathbf P} \Bigl [ \alpha_0 {\widetilde{\mathbf y}}_{1/2} + \alpha_1 {\widetilde{\mathbf y}}_{0} + \alpha_2 {\widetilde{\mathbf y}}_{-1/2} \Bigr ] \\
%%%
& = {\widetilde {\mathbf y}}_{1/2} + \frac{\Delta t}{2} {\mathbf B}({\mathbf I} - {\mathbf P}) \Bigl [ 
{\beta_{1, 0}} {\mathbf y}_n + {\beta_{1, 1}} {\mathbf y}_{n-1} + {\beta_{1, 2}} {\mathbf y}_{n-2} \Bigl ] 
+ \frac{\Delta t}{2}{\mathbf B}{\mathbf P} \Bigl [ \alpha_0 {\widetilde {\mathbf y}}_{1/2} + 
\alpha_1 {{\mathbf y}}_{n} + \alpha_2 {\widetilde {\mathbf y}}_{-1/2} \Bigr ]
\end{split} \end{equation*}
with
\begin{equation*} \begin{split}
\beta_{1, 0} & = \alpha_0 \left \{ \widetilde{\gamma}_0(1/2) + \widetilde{\gamma}_1(1/2) + \widetilde{\gamma}_2(1/2) \right \} +
                 \alpha_1 \left \{ \widetilde{\gamma}_0(0) + \widetilde{\gamma}_1(0) + \widetilde{\gamma}_2(0) \right \} \\
              &+ \alpha_2 \left \{ \widetilde{\gamma}_0(-1/2) + \widetilde{\gamma}_1(-1/2) + \widetilde{\gamma}_2(-1/2) \right \} \\
             & = \frac{23}{12} \left \{ 1 + \frac{1}{2} + \frac{3}{8} \right \} - \frac{16}{12} \left \{ 1 + 0 + 0 \right \} 
               + \frac{5}{12} \left \{ 1 - \frac{1}{2} - \frac{1}{8} \right \} = \frac{29}{12} \,,\\
\beta_{1, 1} & = \alpha_0 \left \{ - \widetilde{\gamma}_1(1/2) -2 \widetilde{\gamma}_2(1/2) \right \} +
                 \alpha_1 \left \{ - \widetilde{\gamma}_1(0) -2 \widetilde{\gamma}_2(0) \right \}
               + \alpha_2 \left \{ - \widetilde{\gamma}_1(-1/2) -2 \widetilde{\gamma}_2(-1/2) \right \} \\
             & = \frac{23}{12} \left \{ -\frac{1}{2} -\frac{3}{4} \right \} - \frac{16}{12} \left \{ 0 - 0 \right \} 
               + \frac{5}{12} \left \{ \frac{1}{2} + \frac{1}{4} \right \} = -\frac{25}{12} \,, \\
\beta_{1, 2} & = \alpha_0  \widetilde{\gamma}_2(1/2) + \alpha_1 \widetilde{\gamma}_2(0) + \alpha_2  \widetilde{\gamma}_2(-1/2)
               = \frac{23}{12} \left \{ \frac{3}{8} \right \} - \frac{16}{12} \cdot 0 + \frac{5}{12} \left \{ -\frac{1}{8} \right \} = \frac{8}{12} \,.
\end{split} \end{equation*}
Recall that the coefficients $\alpha_\ell$ correspond to the standard coefficients of the Adams-Bashforth methods given in Table \ref{tab2}.
After simplification, the LTS-AB$3$($2$) method then reads (see Figure \ref{fig_lts32}):
\begin{equation*} \begin{split}
\widetilde{\mathbf y}_{1/2} & = {{\mathbf y}}_n + \frac{\Delta t}{2} {\mathbf B}({\mathbf I} - {\mathbf P}) \left [
\frac{17}{12} {\mathbf y}_n - \frac{7}{12} {\mathbf y}_{n-1} +  \frac{2}{12} {\mathbf y}_{n-2} \right ] 
+ \frac{\Delta t}{2}{\mathbf B}{\mathbf P} \left [ \frac{23}{12} {{\mathbf y}}_n -
\frac{16}{12} {\widetilde{\mathbf y}}_{-1/2} + \frac{5}{12} {{\mathbf y}}_{n-1} \right ] \,, \\
{\mathbf y}_{n+1} = \widetilde{\mathbf y}_{1} & = \widetilde{\mathbf y}_{1/2} + \frac{\Delta t}{2} {\mathbf B}({\mathbf I} - {\mathbf
P}) \left [ \frac{29}{12} {\mathbf y}_n - \frac{25}{12} {\mathbf y}_{n-1} +  \frac{8}{12} {\mathbf y}_{n-2} \right ] 
+ \frac{\Delta t}{2}{\mathbf B}{\mathbf P} \left [ \frac{23}{12} 
{\widetilde{\mathbf y}}_{1/2} - \frac{16}{12} {{\mathbf y}}_{n} + \frac{5}{12} {\widetilde{\mathbf y}}_{-1/2} \right ] \,.
\end{split} \end{equation*}

For the case with $k=4$ and $p = 2$, similar calculations yield the LTS-AB$4$($2$) scheme:
\begin{equation*} \begin{split}
\widetilde{\mathbf y}_{1/2}  = {{\mathbf y}}_n & +
\frac{\Delta t}{2} {\mathbf B}({\mathbf I} - {\mathbf P}) \left [
\frac{297}{192} {\mathbf y}_n - \frac{187}{192} {\mathbf y}_{n-1} +  \frac{107}{192} {\mathbf y}_{n-2} - \frac{25}{192} {\mathbf y}_{n-3}\right ] \\
& + \frac{\Delta t}{2}{\mathbf B}{\mathbf P} \left [ \frac{55}{24} {{\mathbf y}}_n 
- \frac{59}{24} {\widetilde{\mathbf y}}_{-1/2} + \frac{37}{24} {{\mathbf y}}_{n-1} - \frac{9}{24} {\widetilde{\mathbf y}}_{-3/2} \right ] \,, \\
{\mathbf y}_{n+1} = \widetilde{\mathbf y}_{1} = \widetilde{\mathbf y}_{1/2} & + \frac{\Delta t}{2} {\mathbf B}({\mathbf I} - {\mathbf P}) \left [
\frac{583}{192} {\mathbf y}_n - \frac{757}{192} {\mathbf y}_{n-1} + \frac{485}{192} {\mathbf y}_{n-2} - \frac{119}{192} {\mathbf y}_{n-3} \right ] \\
& + \frac{\Delta t}{2}{\mathbf B}{\mathbf P} \left [ \frac{55}{24} {\widetilde{\mathbf y}}_{1/2} 
- \frac{59}{24} {{\mathbf y}}_{n} + \frac{37}{24} {\widetilde{\mathbf y}}_{-1/2} - \frac{9}{24} {{\mathbf y}}_{n-1} \right ] \,.
\end{split} \end{equation*}

Other examples of LTS Adams-Bashforth schemes are listed in Table \ref{tab4}, where the coefficients
of the schemes are given for different values of $k$ and $p$.

\begin{table}[ht!]
\begin{center}
\renewcommand{\arraystretch}{1.5}
\begin{tabular}{c|c|cccc} \hline
Scheme     & $m$ & $\beta_{m, 0}$     & $\beta_{m, 1}$      &
$\beta_{m, 2}$     & $\beta_{m, 3}$     \\ \hline
LTS-AB2(2) &  0  & $\frac{5}{4}$      & $-\frac{1}{4}$      &     0              & 0                  \\
           &  1  & $\frac{7}{4}$      & $-\frac{3}{4}$      &     0              & 0                  \\ \hline
%%%%%%%%%%%%%%%%%%%%%%%%%%%%%%%%%%%%%%%%%%%%%%%%%%%%%%%%%%%%%%%%%%%%%%%%%%%%%%%%%%%%%%%%%%%%%%%%%%%%%%%%%%%%%%%%
LTS-AB2(3) &  0  & $\frac{7}{6}$      & $-\frac{1}{6}$      &     0              & 0                  \\
           &  1  & $\frac{9}{6}$      & $-\frac{3}{6}$      &     0              & 0                  \\
           &  2  & $\frac{11}{6}$     & $-\frac{5}{6}$      &     0              & 0                  \\ \hline \hline
%%%%%%%%%%%%%%%%%%%%%%%%%%%%%%%%%%%%%%%%%%%%%%%%%%%%%%%%%%%%%%%%%%%%%%%%%%%%%%%%%%%%%%%%%%%%%%%%%%%%%%%%%%%%%%%%
LTS-AB3(2) &  0  & $\frac{17}{12}$    & $-\frac{7}{12}$     & $\frac{2}{12}$     & 0                  \\
           &  1  & $\frac{29}{12}$    & $-\frac{25}{12}$    & $\frac{8}{12}$     & 0                  \\ \hline
%%%%%%%%%%%%%%%%%%%%%%%%%%%%%%%%%%%%%%%%%%%%%%%%%%%%%%%%%%%%%%%%%%%%%%%%%%%%%%%%%%%%%%%%%%%%%%%%%%%%%%%%%%%%%%%%%
LTS-AB3(3) &  0  & $\frac{137}{108}$  & $-\frac{40}{108}$   & $\frac{11}{108}$   & 0                  \\
           &  1  & $\frac{203}{108}$  & $-\frac{136}{108}$  & $\frac{41}{108}$   & 0                  \\
           &  2  & $\frac{281}{108}$  & $-\frac{256}{108}$  & $\frac{83}{108}$   & 0                  \\ \hline \hline
%%%%%%%%%%%%%%%%%%%%%%%%%%%%%%%%%%%%%%%%%%%%%%%%%%%%%%%%%%%%%%%%%%%%%%%%%%%%%%%%%%%%%%%%%%%%%%%%%%%%%%%%%%%%%%%%%
LTS-AB4(2) &  0  & $\frac{297}{192}$  & $-\frac{187}{192}$  & $\frac{107}{192}$  & $-\frac{25}{192}$  \\
           &  1  & $\frac{583}{192}$  & $-\frac{757}{192}$  & $\frac{485}{192}$  & $-\frac{119}{192}$ \\ \hline
%%%%%%%%%%%%%%%%%%%%%%%%%%%%%%%%%%%%%%%%%%%%%%%%%%%%%%%%%%%%%%%%%%%%%%%%%%%%%%%%%%%%%%%%%%%%%%%%%%%%%%%%%%%%%%%%%%
LTS-AB4(3) &  0  & $\frac{871}{648}$  & $-\frac{387}{648}$  & $\frac{213}{648}$  & $-\frac{49}{648}$  \\
           &  1  & $\frac{1425}{648}$ & $-\frac{1437}{648}$ & $\frac{867}{648}$  & $-\frac{207}{648}$  \\
           &  2  & $\frac{2159}{648}$ & $-\frac{2955}{648}$ & $\frac{1917}{648}$ & $-\frac{473}{648}$  \\ \hline
\end{tabular} 
\caption{Coefficients of LTS-AB$k$($p$) schemes for $k~=~2, 3, 4$ and $p = 2, 3, 4$.} \label{tab4}
\end{center}
\end{table}

%%%%%%%%%%%%%%%%%%%%%%%%%%%%%%%%%%%%%%%%%%%%%%%%%%%%%%%%%%%%%%%%%%%%%%%%%%%%%%%%%%%%%%%%%%%%%%%%%%%%%%%%%%%%%%%%%%%%%%%%%%%
\subsection{Accuracy of the LTS scheme}
We now establish the accuracy of the LTS-AB$k$($p$) scheme. To do so, we first recall that for $k \geq 1$, we have 
\begin{equation} \label{prop_alpha}
\sum_{j=0}^{k-1} \alpha_j = 1\,,
\end{equation}
where $\alpha_j$, $j = 0, \dots, k-1$ are the standard coefficients of the $k$-step Adams-Bashforth scheme (\ref{stand_ab}); 
see Theorem 2.4 in \cite{HNW00} for details. Next, we shall need the following identity.
\begin{lemma}
For $k \geq 1$ and $p \geq 2$ we have
\begin{equation} \label{prop_beta}
\sum_{m=0}^{p-1} \beta_{m, \ell} = p \, \alpha_{\ell}\,, \quad \ell = 0, \dots, k-1\,,
\end{equation}
where $\alpha_\ell$ and $\beta_{m, \ell}$ ($\ell = 0, \dots, k-1$, $m = 0, \dots, p-1$) correspond to the standard 
coefficients of the $k$-step Adams-Bashforth scheme (\ref{stand_ab}) and the coefficients of the LTS-AB$k$($p$) scheme defined in (\ref{coeff_beta_lts}), respectively.
\end{lemma}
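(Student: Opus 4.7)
\medskip

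\noindent\textbf{Proof proposal.} The plan is to exploit the fact that the classical $k$-step Adams-Bashforth method is constructed so as to integrate exactly any polynomial of degree at most $k-1$, and to apply this exactness property to the scalar test right-hand side $\widetilde{\gamma}_j(\tau/\Delta t)$, which is a polynomial in $\tau$ of degree $j\leq k-1$. The intuition is simply that when $\mathbf{P}=\mathbf{0}$, the LTS-AB$k$($p$) scheme ought to collapse to a single AB$k$ step of length $\Delta t$; the identity \eqref{prop_beta} is the algebraic content of that reduction.

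First I would consider the scalar IVP $z'(\tau)=\widetilde{\gamma}_j(\tau/\Delta t)$, $z(0)=0$, whose exact solution is $z(\tau)=\Delta t\,\gamma_j(\tau/\Delta t)$. Applying the standard $k$-step AB quadrature with step $\Delta\tau=\Delta t/p$ to this RHS on the subinterval $[\tau_m,\tau_m+\Delta\tau]$ reproduces the integral exactly, since $\widetilde{\gamma}_j$ has degree $j\leq k-1$. Telescoping from $m=0$ to $m=p-1$ therefore yields
\begin{equation*}
\sum_{m=0}^{p-1}\sum_{\ell=0}^{k-1}\alpha_\ell\,\widetilde{\gamma}_j\!\left(\frac{m-\ell}{p}\right)=\frac{1}{\Delta\tau}\int_0^{\Delta t}\widetilde{\gamma}_j(\tau/\Delta t)\,d\tau=p\,\gamma_j(1),\qquad j=0,\dots,k-1. \tag{$\ast$}
\end{equation*}
This is the key identity; it is where the exactness of AB$k$ on low-degree polynomials enters.

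Second, I would translate between the backward-difference form \eqref{ab} of AB$k$ and the $y_{n-\ell}$ form \eqref{stand_ab}. Expanding $\nabla^j {\mathbf y}_n=\sum_{\ell=0}^{j}(-1)^\ell\binom{j}{\ell}{\mathbf y}_{n-\ell}$ in \eqref{ab} at $\xi=1$ and comparing with \eqref{stand_ab} gives the coefficient identity
\begin{equation*}
\alpha_\ell=\sum_{j=\ell}^{k-1}(-1)^\ell\binom{j}{\ell}\gamma_j(1),\qquad \ell=0,\dots,k-1. \tag{$\ast\ast$}
\end{equation*}
Exactly the same rearrangement applied to the definition \eqref{coeff_beta_lts} shows $\beta_{m,\ell}=\sum_{j=\ell}^{k-1}(-1)^\ell\binom{j}{\ell}\sum_{i=0}^{k-1}\alpha_i\,\widetilde{\gamma}_j((m-i)/p)$.

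Finally I would combine the three pieces. Summing the displayed expression for $\beta_{m,\ell}$ over $m$, interchanging the $m$-sum with the $j$-sum, invoking ($\ast$) on the innermost double sum over $m$ and $i$, and then using ($\ast\ast$), one obtains
\begin{equation*}
\sum_{m=0}^{p-1}\beta_{m,\ell}=\sum_{j=\ell}^{k-1}(-1)^\ell\binom{j}{\ell}\,p\,\gamma_j(1)=p\,\alpha_\ell,
\end{equation*}
which is \eqref{prop_beta}. The main obstacle is not analytical but notational: one must carefully track the switch between the backward-difference representation used to derive the LTS scheme and the $y_{n-\ell}$ representation in which $\beta_{m,\ell}$ is defined, so that the exactness identity ($\ast$) can be matched term-by-term with the coefficient identity ($\ast\ast$).
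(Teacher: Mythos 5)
Your proposal is correct, and it takes a genuinely different route from the paper: in fact, the paper does not prove the lemma at all. Its ``proof'' consists of verifying \eqref{prop_beta} by computer algebra for $k\leq 20$ and $p\leq 1000$ and remarking that the identity ``probably holds for all values of $k$ and $p$.'' Your argument settles that conjecture in full generality. The three ingredients all check out: (i) since $\widetilde{\gamma}_j(\tau/\Delta t)$ is a polynomial of degree $j\leq k-1$ in $\tau$, the $k$-point interpolant underlying the AB$k$ quadrature reproduces it exactly on every subinterval, so telescoping over the $p$ substeps and using $\gamma_j(\xi)=\int_0^{\xi}\widetilde{\gamma}_j(s)\,ds$ gives your identity $(\ast)$; (ii) expanding $\nabla^{j}{\mathbf y}_n=\sum_{\ell=0}^{j}(-1)^{\ell}\binom{j}{\ell}{\mathbf y}_{n-\ell}$ in \eqref{ab} at $\xi=1$ and comparing with \eqref{stand_ab} gives $(\ast\ast)$; and (iii) summing \eqref{coeff_beta_lts} over $m$ and interchanging sums combines the two. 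A quick check against Table \ref{tab4} (e.g.\ $k=3$, $p=2$: $\tfrac{17}{12}+\tfrac{29}{12}=2\cdot\tfrac{23}{12}$, $-\tfrac{7}{12}-\tfrac{25}{12}=2\cdot(-\tfrac{16}{12})$, $\tfrac{2}{12}+\tfrac{8}{12}=2\cdot\tfrac{5}{12}$) confirms the result. Your opening intuition is also the right one: \eqref{prop_beta} is precisely the statement that for ${\mathbf P}={\mathbf 0}$ the $p$ inner substeps collapse to a single AB$k$ step of length $\Delta t$. What your approach buys is substantial: it removes the only non-rigorous step in the paper's consistency argument (Theorem 1 relies on \eqref{prop_beta}), at the cost of nothing more than the standard exactness property of Adams--Bashforth quadrature on polynomials of degree at most $k-1$.
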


\begin{proof} The identity in (\ref{prop_beta}) was verified by computer algebra for all $k \leq 20$ and $p \leq 1000$, which is sufficient for all 
practical purposes. It probably holds for all values of $k$ and $p$. \end{proof}

We are now ready to establish the accuracy of the LTS-AB$k$($p$) scheme (Algorithm 3.1).

\begin{theorem}
The local time-stepping method LTS-AB$k$($p$) is consistent of order $k$.
\end{theorem}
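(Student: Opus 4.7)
My plan is to split the local truncation error $\mathbf{y}_{n+1}-\mathbf{y}(t_n+\Delta t)$ into a \emph{modelling} error and a \emph{substep} error, corresponding respectively to replacing the coarse part of the exact solution by an interpolation polynomial and to integrating the resulting auxiliary linear ODE (\ref{mod_pr}) with $p$ fractional steps of AB$k$. As is standard for a consistency statement, I assume the input values $\mathbf{y}_{n-\ell}$ and the stored fine values $\widetilde{\mathbf{y}}_{-\ell/p}$ coincide with the exact solution (respectively with $\mathbf{P}$ times the exact solution) at the corresponding times.

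First I would telescope the substep formula (\ref{lts_ab}) from $m=0$ to $m=p-1$. Thanks to Lemma~1, the coarse contribution collapses exactly into the classical $k$-step AB update (\ref{stand_ab}). Hence, in the degenerate case $\mathbf{P}=\mathbf{0}$ or $p=1$, LTS-AB$k$($p$) coincides with the standard AB$k$ scheme, which is consistent of order $k$. More importantly, this identifies (\ref{tmp5}) as the classical AB$k$ scheme applied with step size $\Delta\tau=\Delta t/p$ to the auxiliary linear problem (\ref{mod_pr}); this is the structural observation that drives the analysis of the general case.

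Second, I would bound the modelling error $\mathbf{y}(t_n+\Delta t)-\widetilde{\mathbf{y}}(\Delta t)$. The auxiliary problem (\ref{mod_pr}) is obtained from the exact identity (\ref{tmp1}) by replacing the unknown $\mathbf{y}(t)$ inside the coarse integral by the degree-$(k-1)$ polynomial interpolating $\mathbf{y}$ at $t_{n-k+1},\dots,t_n$. Classical Lagrange-remainder estimates, applied here on the extrapolation interval $[t_n,t_n+\Delta t]$, give an integrand error of $O(\Delta t^k)$, provided $\mathbf{y}\in C^k$ locally, which holds because (\ref{sd_mod_pr}) has a smooth linear right-hand side. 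Subtracting (\ref{mod_pr}) from (\ref{tmp1}) in integrated form and applying a Gr\"onwall inequality (to absorb the $\mathbf{B}\mathbf{P}\widetilde{\mathbf{y}}$ contribution) then yields $\widetilde{\mathbf{y}}(\Delta t)-\mathbf{y}(t_n+\Delta t)=O(\Delta t^{k+1})$.

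Third, I would bound the substep error $\widetilde{\mathbf{y}}_1-\widetilde{\mathbf{y}}(\Delta t)$. By the first step, (\ref{tmp5}) is simply AB$k$ applied to the smooth linear ODE (\ref{mod_pr}) over the fixed interval $[0,\Delta t]$ with step $\Delta\tau=\Delta t/p$ and consistent starting values. The classical AB$k$ analysis then gives a local truncation error of $O((\Delta\tau)^{k+1})$ per substep, hence an accumulated error of $O(p(\Delta\tau)^{k+1})=O(\Delta t^{k+1}/p^k)$ at $\tau=\Delta t$. Combining this with the modelling bound by the triangle inequality yields $\mathbf{y}_{n+1}-\mathbf{y}(t_n+\Delta t)=O(\Delta t^{k+1})$, which is precisely consistency of order $k$. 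The main obstacle is the modelling estimate: although the extrapolation remainder bound is classical, one must verify that its constants remain controlled uniformly in $\Delta t$ and that the Gr\"onwall argument correctly handles the $\mathbf{B}\mathbf{P}\widetilde{\mathbf{y}}$ term in (\ref{mod_pr}); the rest is bookkeeping on top of standard AB$k$ convergence results.
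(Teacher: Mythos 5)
Your proposal is correct in outline but takes a genuinely different route from the paper. The paper does not pass through the exact auxiliary solution $\widetilde{\mathbf y}(\tau)$ at all in its proof: it telescopes (\ref{lts_ab}) over $m=0,\dots,p-1$ using Lemma~1 and (\ref{prop_alpha}), Taylor-expands the discrete substep iterates $\widetilde{\mathbf y}_{m/p}$ about $\tau=0$ (after computing $\widetilde{\mathbf y}'(0),\widetilde{\mathbf y}''(0),\dots$ from (\ref{mod_pr})), and then verifies by explicit coefficient cancellation --- using the numerical values of the $\alpha_j$ --- that the result matches the Taylor series of $\mathbf y(t_{n+1})$; this is carried out case by case for $k=2,3,4$ only. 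Your decomposition into a modelling error $\mathbf y(t_{n+1})-\widetilde{\mathbf y}(\Delta t)$ (interpolation/extrapolation remainder of order $\Delta t^{k}$ fed through a Gr\"onwall bound over an interval of length $\Delta t$) and a substep error $\widetilde{\mathbf y}_1-\widetilde{\mathbf y}(\Delta t)$ (classical AB$k$ convergence on (\ref{mod_pr}) over $[0,\Delta t]$ with step $\Delta\tau=\Delta t/p$) is sound, works uniformly in $k$, and --- notably --- does not actually need Lemma~1 in its essential steps, whereas the paper's argument leans on that identity, which is itself only verified by computer algebra for finitely many $(k,p)$. What the paper's approach buys is a completely elementary, self-contained computation; what yours buys is generality and a cleaner structure, at the price of importing standard multistep convergence theory and of the one verification you correctly flag but do not carry out: that $\|\widetilde{\mathbf y}^{(k+1)}\|$ is bounded uniformly in $\Delta t$. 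That check does go through, since differentiating $\widetilde\gamma_j(\tau/\Delta t)$ $i$ times produces a factor $\Delta t^{-i}$ that is compensated by $\nabla^j\mathbf y_n=\mathcal O(\Delta t^{j})$ (and $\widetilde\gamma_j^{(i)}\equiv 0$ for $i>j$), so each derivative of the forcing term in (\ref{mod_pr}) is $\mathcal O(1)$. You should also make explicit that the stored starting values $\widetilde{\mathbf y}_{-\ell/p}=\mathbf P\mathbf y_{n-\ell/p}$ agree with $\mathbf P\widetilde{\mathbf y}(-\ell\Delta\tau)$ to $\mathcal O(\Delta t^{k+1})$, which follows from running your Gr\"onwall argument backward over $[-(k-1)\Delta\tau,0]$ (where the interpolation, not extrapolation, error bound applies); with that addition the argument is complete.
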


\begin{proof}
For simplicity, we restrict ourselves to the cases $k = 2$, 3 and 4, as the extension to the general case $k > 4$ 
is straightforward but cumbersome.

To prove the second-order consistency of LTS-AB$2$($p$), we need to show that the local error ${\mathbf y}_{n+1} - {\mathbf y}(t_{n+1})$ is 
${\mathcal O}(\Delta t^3)$. Since (\ref{lts_ab}) with $k=2$ and 
$\Delta \tau = \Delta t / p$ holds for all $m = 0, \dots, p-1$, we have that
\begin{equation*} \begin{split}
{\mathbf y}_{n+1} = {\widetilde {\mathbf y}}_1 = {\widetilde {\mathbf y}}_0 & + 
\frac{\Delta t}{p} {\mathbf B}({\mathbf I} - {\mathbf P}) 
\left ( \sum_{m=0}^{p-1} \beta_{m, 0} {\mathbf y}_n + \sum_{m=0}^{p-1} \beta_{m, 1} {\mathbf y}_{n-1} \right ) \\
& + \frac{\Delta t}{p} {\mathbf B}{\mathbf P} 
\left ( \alpha_0 {\widetilde {\mathbf y}}_{(p-1)/p} + \sum_{m=0}^{p-2} (\alpha_0 + \alpha_1) {\widetilde {\mathbf y}}_{m/p}
+ \alpha_1 {\widetilde {\mathbf y}}_{-1/p} \right ) \,.
\end{split} \end{equation*}
Next, we use that ${\widetilde{\mathbf y}}_0:= \mathbf y_n$ as well as (\ref{prop_alpha}) and (\ref{prop_beta}) with $k=2$. This yields
\begin{equation} \label{lts-ab2-1} 
{\mathbf y}_{n+1} = {\mathbf y}_n + \Delta t \, {\mathbf B}({\mathbf I} - {\mathbf P}) 
\left ( \alpha_0 {\mathbf y}_n + \alpha_1 {\mathbf y}_{n-1} \right ) + \frac{\Delta t}{p} {\mathbf B}{\mathbf P} 
\left ( \alpha_0 {\widetilde {\mathbf y}}_{(p-1)/p} + \sum_{m=0}^{p-2}{\widetilde {\mathbf y}}_{m/p} + \alpha_1 {\widetilde {\mathbf y}}_{-1/p} \right )\,.
\end{equation}
For $\tau = 0$ and $k=2$, we find from (\ref{mod_pr}) and ${\widetilde \gamma}_j(0) = 0$, $j \geq 1$, that
$$
{\widetilde {\mathbf y}}'(0) = {\mathbf B}({\mathbf I} - {\mathbf P}) {\mathbf y}_n + 
{\mathbf B}{\mathbf P} {\widetilde {\mathbf y}}(0) = {\mathbf B}{\mathbf y}_n = {\mathbf y}'(t_n)\,.
$$
Thus, we may expand ${\widetilde {\mathbf y}}_{m/p}$, $m = -1, \dots, p-1$, in Taylor series as
\begin{equation} \label{lts-ab2-2}
{\widetilde {\mathbf y}}_{m/p} = {\widetilde {\mathbf y}}(0) + \frac{m}{p} \, \Delta t \, {\widetilde {\mathbf y}}'(0) + {\mathcal O}(\Delta t^2) = 
{\mathbf y}_n +  \frac{m}{p} \, \Delta t \, {\mathbf y}'(t_n) + {\mathcal O}(\Delta t^2)\,.
\end{equation}
We now insert (\ref{lts-ab2-2}) into (\ref{lts-ab2-1}), replace ${\mathbf y}_{n-1}$ by its Taylor expansion, use 
(\ref{prop_alpha}) with $k=2$ and obtain after some simplifications
\begin{equation*}
{\mathbf y}_{n+1} = {\mathbf y}_n+ \Delta t \, {\mathbf B}({\mathbf I} - {\mathbf P})
\left ( {\mathbf y}_n - \alpha_1 \, \Delta t \, {\mathbf y}'(t_n) + {\mathcal O}(\Delta t^2) \right )
+ \Delta t \, {\mathbf B}{\mathbf P} \left ( {\mathbf y}_n + {\mathcal C}_1 \, \Delta t \, {\mathbf y}'(t_n) 
+ {\mathcal O}(\Delta t^2) \right ) \,,
\end{equation*}
where
$$
{\mathcal C}_1 := \frac{1}{p^2} \left [ \alpha_0 (p-1) + \frac{(p-1)(p-2)}{2} - \alpha_1 \right ]\,.
$$
With the coefficients of the classical two-step Adams-Bashforth scheme $\alpha_0 = 3/2$ and $\alpha_1 = -1/2$, we note that
$$
\alpha_1 + {\mathcal C}_1 = \frac{1}{2p^2} \left [ -p^2 + 3(p-1) + (p-1)(p-2) + 1 \right ] = 0\,.
$$
By differentiating (\ref{sd_mod_pr}), we thus find
\begin{equation*}
{\mathbf y}_{n+1} = {\mathbf y}_n + \Delta t \, {\mathbf B} 
\left ( {\mathbf y}_n - \alpha_1 \, \Delta t \, {\mathbf y}'(t_n) + {\mathcal O}(\Delta t^2) \right ) = 
{\mathbf y}_n + \Delta t \, {\mathbf y}'(t_n) + \frac{\Delta t^2}{2}\,{\mathbf y}''(t_n) + {\mathcal O}(\Delta t^3)\,,
\end{equation*}
which yields a local error of order ${\mathcal O}(\Delta t^3)$.

For $k = 3$, we need to prove a local error of order ${\mathcal O}(\Delta t^4)$. Similarly to the derivation of (\ref{lts-ab2-1}), we now find
\begin{equation} \label{lts-ab3-1} \begin{split}
{\mathbf y}_{n+1} & = {\mathbf y}_n + \Delta t \, {\mathbf B}({\mathbf I} - {\mathbf P}) 
\left ( \alpha_0 {\mathbf y}_n + \alpha_1 {\mathbf y}_{n-1} + \alpha_2 {\mathbf y}_{n-2} \right ) \\
& + \frac{\Delta t}{p} {\mathbf B}{\mathbf P} 
\left ( \alpha_0 {\widetilde {\mathbf y}}_{(p-1)/p} + (\alpha_0 + \alpha_1){\widetilde {\mathbf y}}_{(p-2)/p} 
+ \sum_{m=0}^{p-3}{\widetilde {\mathbf y}}_{m/p}
+ (\alpha_1+\alpha_2) {\widetilde {\mathbf y}}_{-1/p} + \alpha_2 {\widetilde {\mathbf y}}_{-2/p} \right )\,.
\end{split} \end{equation}
Again, from (\ref{mod_pr}) we have ${\widetilde {\mathbf y}}'(0) = {\mathbf y}'(t_n)$. By differentiation of (\ref{mod_pr}) and 
using Taylor expansions we also find
\begin{equation*} \begin{split}
{\widetilde {\mathbf y}}''(0) & = {\mathbf B}({\mathbf I} - {\mathbf P}) 
\sum_{j=0}^{2} \frac{1}{\Delta t} \, {\widetilde \gamma}_j'(0) \nabla^j {\mathbf y}_n + {\mathbf B}{\mathbf P} {\widetilde {\mathbf y}}'(0) 
= {\mathbf B}({\mathbf I} - {\mathbf P}) \frac{1}{\Delta t} \left ( \frac{3}{2} {\mathbf y}_n -2 {\mathbf y}_{n-1} + \frac{1}{2}{\mathbf y}_{n-2} \right ) 
+ {\mathbf B}{\mathbf P} {\mathbf y}'(t_n) \\
& = {\mathbf B}({\mathbf I} - {\mathbf P}) \frac{1}{\Delta t} 
\left ( \Delta t \, {\mathbf y}'(t_n) + {\mathcal O}(\Delta t^2) \right ) + {\mathbf B}{\mathbf P} {\mathbf y}'(t_n) 
= {\mathbf y}''(t_n) + {\mathcal O}(\Delta t)\,.
\end{split} \end{equation*}
Thus, we deduce that
$$
{\widetilde {\mathbf y}}_{m/p} = {\mathbf y}_n + \frac{m}{p} \, \Delta t \, {\mathbf y}'(t_n) + 
\frac{m^2}{p^2} \, \frac{\Delta t^2}{2} \, {\mathbf y}''(t_n) + {\mathcal O}(\Delta t^3)\,, \quad m = -2, \dots, p-1\,.
$$
By following similar arguments as for $k=2$, we now obtain
\begin{equation} \label{tmp_eq_k3} \begin{split}
{\mathbf y}_{n+1} = {\mathbf y}_n & + \Delta t \, {\mathbf B}({\mathbf I} - {\mathbf P})
\left ( {\mathbf y}_n -(\alpha_1+2 \alpha_2) \, \Delta t \, {\mathbf y}'(t_n)
+ (\alpha_1+4 \alpha_2) \, \frac{\Delta t^2}{2} \, {\mathbf y}''(t_n) + {\mathcal O}(\Delta t^3) \right ) \\
& + \Delta t \, {\mathbf B}{\mathbf P} \left ( {\mathbf y}_n + {\mathcal C}_1 \, \Delta t \, {\mathbf y}'(t_n) 
+ {\mathcal C}_2 \, \frac{\Delta t^2}{2} \, {\mathbf y}''(t_n) + {\mathcal O}(\Delta t^3) \right ) \,,
\end{split} \end{equation}
where
\begin{equation*} \begin{split}
{\mathcal C}_1 & :=  \frac{1}{p^2} \left [ \alpha_0 (p-1) + (\alpha_0 + \alpha_1)(p-2) + \frac{(p-2)(p-3)}{2} 
- (\alpha_1 + \alpha_2) - 2 \alpha_2 \right ]\,, \\
{\mathcal C}_2 & :=  \frac{1}{p^3} \left [ \alpha_0 (p-1)^2 + (\alpha_0 + \alpha_1)(p-2)^2 + \frac{(p-2)(p-3)(2p-5)}{6} 
+ (\alpha_1 + \alpha_2) + 4 \alpha_2 \right ]\,.
\end{split} \end{equation*}
With the coefficients of the classical three-step Adams-Bashforth scheme $\alpha_0 = 23/12$, $\alpha_1 = -16/12$ and 
$\alpha_2 = 5/12$, we can easily verify that
$$
\alpha_1 + 2 \alpha_2 + {\mathcal C}_1 = 0\,, \quad -\alpha_1 - 4 \alpha_2 + {\mathcal C}_2 = 0\,, \quad 
-\alpha_1 - 2 \alpha_2 = \frac{1}{2}\,, \quad \alpha_1 + 4 \alpha_2 = \frac{1}{3}\,.
$$
Hence (\ref{tmp_eq_k3}) reduces to
$$
{\mathbf y}_{n+1} = {\mathbf y}_n + \Delta t \, {\mathbf y}'(t_n) + \frac{\Delta t^2}{2}\,{\mathbf y}''(t_n) 
+ \frac{\Delta t^3}{6}\,{\mathbf y}'''(t_n) + {\mathcal O}(\Delta t^4)\,,
$$
which completes the proof for the LTS-AB$3$($p$) scheme.

The case $k=4$ follows from similar arguments. For $k=4$, we find from (\ref{mod_pr}) with $\tau = 0$ that 
${\widetilde {\mathbf y}}'(0) = {\mathbf y}'(t_n)$ and by using Taylor expansions that
\begin{equation*} \begin{split}
{\widetilde {\mathbf y}}''(0) & = {\mathbf B}({\mathbf I} - {\mathbf P}) \frac{1}{\Delta t} 
\left ( \frac{11}{6} {\mathbf y}_n -3 {\mathbf y}_{n-1} + \frac{3}{2}{\mathbf y}_{n-2} - \frac{1}{3}{\mathbf y}_{n-3} \right ) 
+ {\mathbf B}{\mathbf P} {\mathbf y}'(0) = {\mathbf y}''(t_n) + {\mathcal O}(\Delta t)\,,\\
{\widetilde {\mathbf y}}'''(0) & = {\mathbf B}({\mathbf I} - {\mathbf P}) \frac{1}{\Delta t^2} 
\left ( 2{\mathbf y}_n -5 {\mathbf y}_{n-1} + 4 {\mathbf y}_{n-2} - {\mathbf y}_{n-3} \right ) 
+ {\mathbf B}{\mathbf P} {\mathbf y}''(0) = {\mathbf y}'''(t_n) + {\mathcal O}(\Delta t)\,.
\end{split} \end{equation*}
This yields
\begin{equation} \label{lts-ab4-1}
{\widetilde {\mathbf y}}_{m/p} = {\mathbf y}_n + \frac{m}{p} \, \Delta t \, {\mathbf y}'(t_n) + 
\frac{m^2}{p^2} \, \frac{\Delta t^2}{2} \, {\mathbf y}''(t_n) + 
\frac{m^3}{p^3} \, \frac{\Delta t^3}{6} \, {\mathbf y}'''(t_n) + {\mathcal O}(\Delta t^4)\,, \quad m = -3, \dots, p-1\,.
\end{equation}
Next, we insert (\ref{lts-ab4-1}) into (\ref{lts_ab}) with $k=4$ and $\Delta \tau = \Delta t / p$, replace ${\mathbf y}_{n-1}$,  ${\mathbf y}_{n-2}$ and 
${\mathbf y}_{n-3}$ by their respective Taylor expansions, use (\ref{prop_alpha}) and (\ref{prop_beta}) with $k=4$, and find after further simplifications
$$
{\mathbf y}_{n+1} = {\mathbf y}_n + \Delta t \, {\mathbf y}'(t_n) + \frac{\Delta t^2}{2}\,{\mathbf y}''(t_n) 
+ \frac{\Delta t^3}{6}\,{\mathbf y}'''(t_n) + \frac{\Delta t^4}{24}\,{\mathbf y}''''(t_n) + {\mathcal O}(\Delta t^5)\,,
$$
which completes the proof for the LTS-AB$4$($p$) scheme.
\end{proof}

%%%%%%%%%%%%%%%%%%%%%%%%%%%%%%%%%%%%%%%%%%%%%%%%%%%%%%%%%%%%%%%%%%%%%%%%%%%%%%%%%%%%%%%%%%%%%%%%%%%%%%%%%%%%%%%%%%%%%%%%%%%
%%%%%%% Numerical experiments (Section 4) %%%%%%%%%%%%%%%%%%%%%%%%%%%%%%%%%%%%%%%%%%%%%%%%%%%%%%%%%%%%%%%%%%%%%%%%%%%%%%%%%
\section{Numerical experiments}
Here we present numerical experiments that illustrate the stability properties of the above LTS methods, validate their expected 
order of convergence and demonstrate their usefulness in the presence of complex geometry. First, we consider a simple 
one-dimensional test problem to study the stability of the different LTS schemes presented above and to show that they 
yield the expected overall rate of convergence when combined with a spatial finite element discretization of comparable 
accuracy, independently of the number of local time-steps $p$ used in the fine region. Then, we illustrate the versatility of our LTS schemes 
by simulating the propagation of a plane wave as it impinges on a roof mounted antenna.

%%%%%%%%%%%%%%%%%%%%%%%%%%%%%%%%%%%%%%%%%%%%%%%%%%%%%%%%%%%%%%%%%%%%%%%%%%%%%%%%%%%%%%%%%%%%%%%%%%%%%%%%%%%%%%%%%%%%%%%%%%%
\subsection{Stability}
We consider the one-dimensional homogeneous damped wave equation (\ref{model_eq_1}) with constant wave speed $c = 1$ and 
damping coefficient $\sigma = 0.1$ on the interval $\Omega = [0\, , \,6]$. Next, we divide $\Omega$ into three
equal parts.  The left and right intervals, $[0\, , \,2]$ and $[4\, , \,6]$, respectively, are discretized with
an equidistant mesh of size $h^{\mbox{\scriptsize coarse}}$, whereas the interval $\Omega_f = [2\, , \,4]$ is discretized with an
equidistant mesh of size $h^{\mbox{\scriptsize fine}} = h^{\mbox{\scriptsize coarse}} / p$ - see Fig. \ref{1D_domain}. Hence, the two outer intervals 
correspond to the coarse region whereas the inner interval $[2\, , \,4]$ to the refined region.

For every time-step $\Delta t$, we shall take $p$ steps of size $\Delta \tau = \Delta t / p$ inside the refined region $\Omega_f$. In the 
absence of local refinement, i.e., $p=1$, the mesh is equidistant throughout $\Omega$.  Then, the LTS-AB$k$($p$) algorithm 
reduces to the standard $k$-step Adams-Bashforth method and we denote by $\Delta t_{ABk}$ the largest time-step allowed. 
For $p \geq 2$, we let $\Delta t_{p}$ denote the maximal time-step permitted in the LTS-AB$k$($p$) algorithm; clearly, we always have 
$\Delta t_p \leq \Delta t_{ABk}$. When $\Delta t_{p} = \Delta t_{ABk}$, the LTS algorithm imposes no further restriction on $\Delta t$; we then call 
the CFL condition of the LTS scheme optimal. We shall now evaluate numerically the CFL condition of the various LTS schemes from Section 3. 
To do so,  we proceed as follows:
\begin{enumerate}
 \item Set $\Delta t_{ABk}$ to the maximal $\Delta t$ allowed for the equidistant mesh of mesh size $h^{\mbox{\scriptsize coarse}}$;
 \item refine the equidistant mesh $p$ times inside $\Omega_f$;
 \item determine the maximal time-step $\Delta t_{p}$ allowed by the LTS-AB$k$($p$) method on the locally refined mesh and compare $\Delta t_{p}$ with $\Delta t_{ABk}$.
\end{enumerate}

\begin{figure}[t]
\centerline{\includegraphics[scale=0.8]{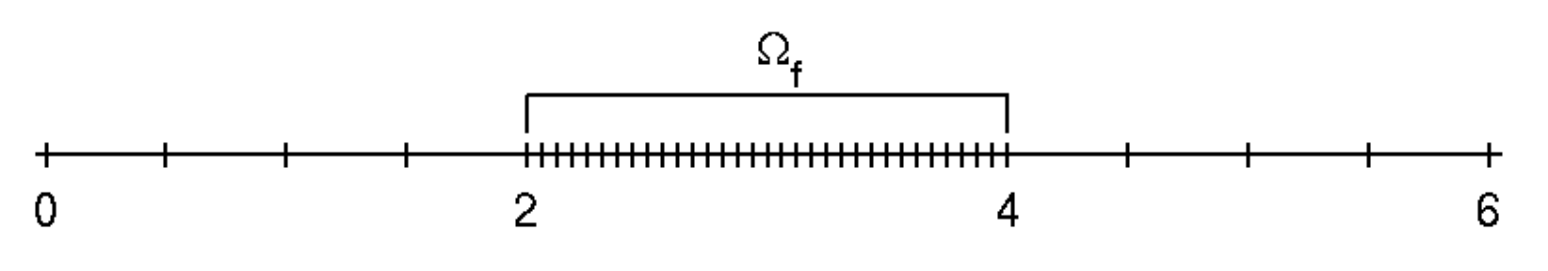}}
\caption{One-dimensional example: the computational domain $\Omega = [0, 6]$ with the refined region $\Omega_f = [2, 4]$.} 
\label{1D_domain}
\end{figure}

First, we consider a ${\cal P}^1$ continuous FE discretization with mass lumping and combine it with the second-order 
Adams-Bashforth scheme. We choose $h^{\mbox{\scriptsize coarse}} = 0.1$ and rewrite the two-step Adams-Bashforth method as the one-step method
$$
\left ( \begin{array}{l} {\mathbf y}_{n+1} \\ {\mathbf y}_{n} \end{array} \right ) = 
{\mathbf C}_{AB2} \left ( \begin{array}{l} {\mathbf y}_{n} \\ {\mathbf y}_{n-1} \end{array} \right ) \,, \quad 
{\mathbf C}_{AB2} = \left ( \begin{array}{cc}
{\mathbf I} + \frac{3}{2} \Delta t {\mathbf B} & -\frac{\Delta t}{2} {\mathbf B} \\[5pt]
{\mathbf I} & {\mathbf 0}
\end{array} \right )\,.
$$
It is stable if $\rho(C_{AB2}) \leq 1$, where $\rho(C_{AB2})$ denotes the spectral radius of the matrix $C_{AB2}$ \cite{HNW00}. 
Progressively increasing $\Delta t$ while monitoring $\rho(C_{AB2})$, we find that the maximal time-step allowed is $\Delta t_{AB2} = 0.0106$ for $p=1$. 
Next, we refine by a factor $p = 2$ those 
elements that lie inside the interval $[2,4]$, that is $h^{\mbox{\scriptsize fine}} = 0.05$, and set to one all corresponding entries in $P$. 
Hence, for every time-step $\Delta t$, we shall take two steps of size $\Delta \tau = \Delta t / 2$ inside the refined region with the second-order 
time-stepping scheme LTS-AB$2$($2$). To determine the stability of the LTS-AB$2$($2$) scheme, we also rewrite it as a one-step method:
 $$
\left ( \begin{array}{l} {\mathbf y}_{n+1} \\ \widetilde {\mathbf y}_{1/2} \\ {\mathbf y}_{n} \end{array} \right ) = 
{\mathbf C}_{LTS-AB2(2)} 
\left ( \begin{array}{l} {\mathbf y}_{n} \\ \widetilde{\mathbf y}_{-1/2} \\ {\mathbf y}_{n-1} \end{array} \right ) \,, \quad 
{\mathbf C}_{LTS-AB2(2)} = \left ( \begin{array}{ccc}
{\mathbf C}_{11} & {\mathbf C}_{12} &  {\mathbf C}_{13} \\
{\mathbf C}_{21} & {\mathbf C}_{22} &  {\mathbf C}_{23} \\
{\mathbf I} & {\mathbf 0} & {\mathbf 0}
\end{array} \right )\,,
$$
with
\begin{equation*} \begin{split}
{\mathbf C}_{11} & = {\mathbf I} + \frac{3}{2} \, \Delta \, {\mathbf B} - \frac{1}{4} \, \Delta t \, \mathbf {BP} 
+ \frac{3}{8} \left ( \frac{\Delta t}{2} \right )^2 (\mathbf {BP})^2 + \frac{15}{8}  \left ( \frac{\Delta t}{2} \right )^2 \mathbf {BPB}\,, \\
{\mathbf C}_{12} & = - \frac{1}{4} \, \Delta t \, \mathbf {BP}  - \frac{3}{4} \left ( \frac{\Delta t}{2} \right )^2 (\mathbf {BP})^2 \,, \\
{\mathbf C}_{13} & = - \frac{1}{2} \, \Delta \, {\mathbf B} + \frac{1}{2} \, \Delta t \, \mathbf {BP} 
+ \frac{3}{8} \left ( \frac{\Delta t}{2} \right )^2 (\mathbf {BP})^2 - \frac{3}{8}  \left ( \frac{\Delta t}{2} \right )^2 \mathbf {BPB}\,, \\
{\mathbf C}_{21} & = {\mathbf I} + \frac{5}{8} \, \Delta \, {\mathbf B} + \frac{1}{8} \, \Delta t \, \mathbf {BP} \,, \quad 
{\mathbf C}_{22} = - \frac{1}{4} \, \Delta t \, \mathbf {BP} \,, \quad 
{\mathbf C}_{23} = - \frac{1}{8} \, \Delta \, {\mathbf B} + \frac{1}{8} \, \Delta t \, \mathbf {BP} \,.
\end{split} \end{equation*}
The LTS-AB$2$($2$) scheme is stable if $\rho({\mathbf C}_{LTS-AB2(2)}) \leq 1$, where $\rho({\mathbf C}_{LTS-AB2(2)})$ denotes 
the spectral radius of the matrix ${\mathbf C}_{LTS-AB2(2)}$. 

To determine the range of values $\Delta t$ for which the LTS-AB$2$($2$) scheme is stable, we compute the spectral radius of 
${\mathbf C}_{LTS-AB2(2)}$ for varying $\Delta t$. As shown in the right frame of Fig.~\ref{plot_eigs_AB2}, the spectral radius transgresses the 
stability threshold at one for a time-step about 80\% of $\Delta t_{AB2}$. Thus, for all time-steps $\Delta t \leq 0.8 \, \Delta t_{AB2}$, the LTS-AB$2$($2$) 
scheme is stable; still, its CFL condition is not optimal.

\begin{figure}[t]
\centering
\begin{tabular}{cc}
\includegraphics[scale=0.25]{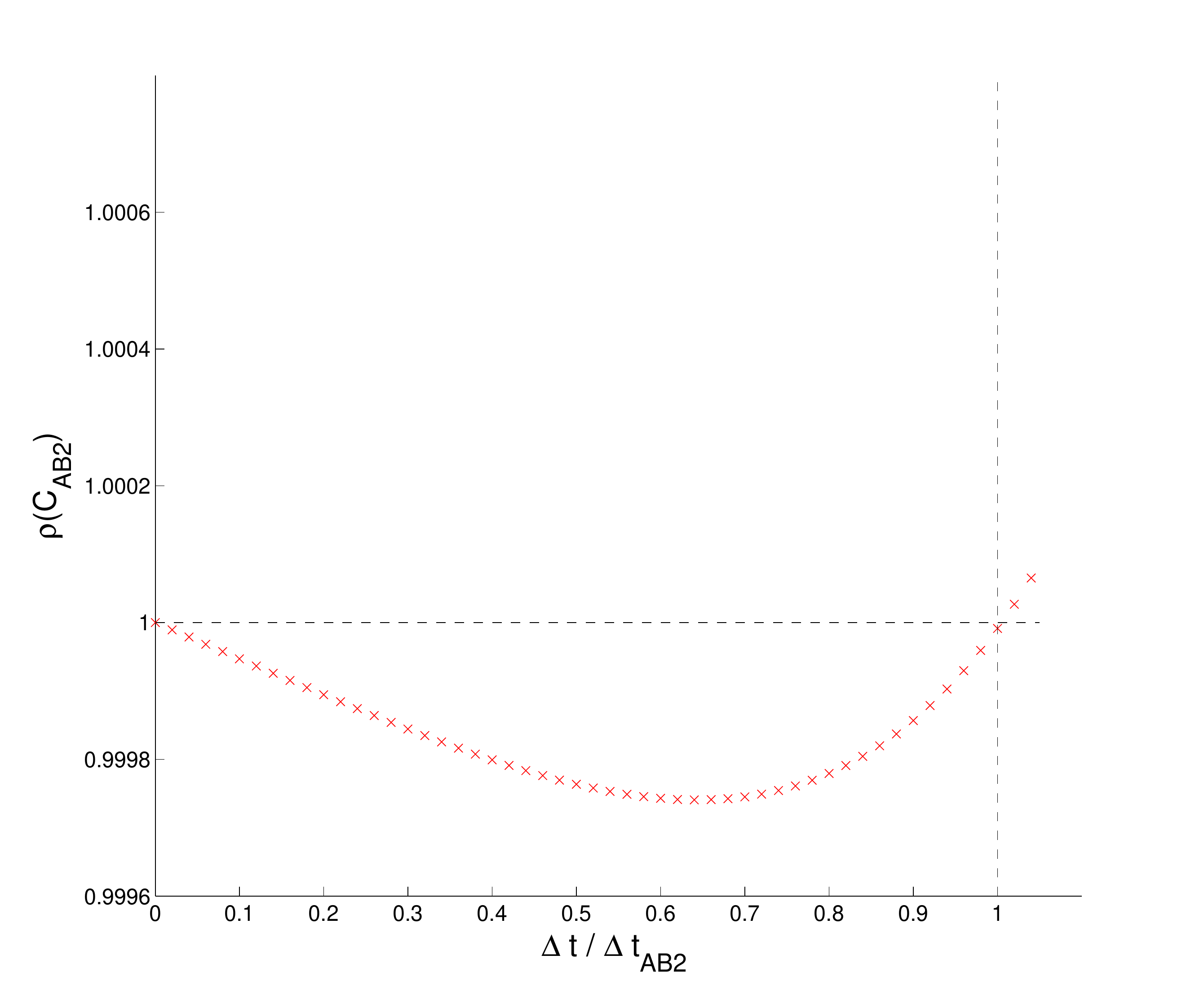}  & \includegraphics[scale=0.25]{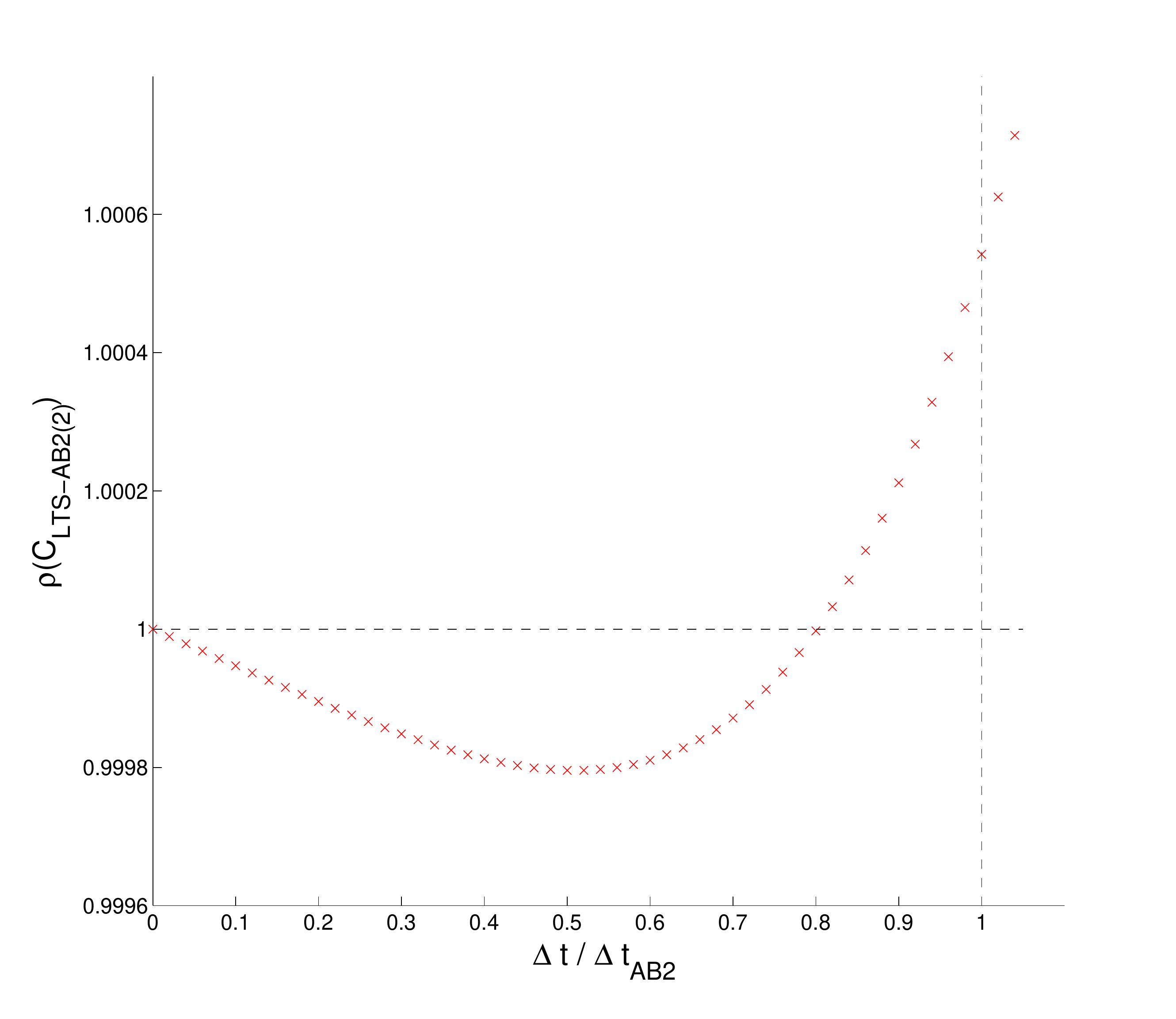}  
\end{tabular}
\caption{The classical AB$2$ and the LTS-AB$2$($2$) schemes combined with ${\cal P}^1$ continuous FE: the spectral radius of ${\mathbf C}_{AB2}$ (left) 
and ${\mathbf C}_{LTS-AB2(2)}$ (right) is shown for varying $\Delta t / \Delta t_{AB2}$.} 
\label{plot_eigs_AB2}
\end{figure}

\begin{figure}[t]
\centering
\begin{tabular}{cc}
\includegraphics[scale=0.25]{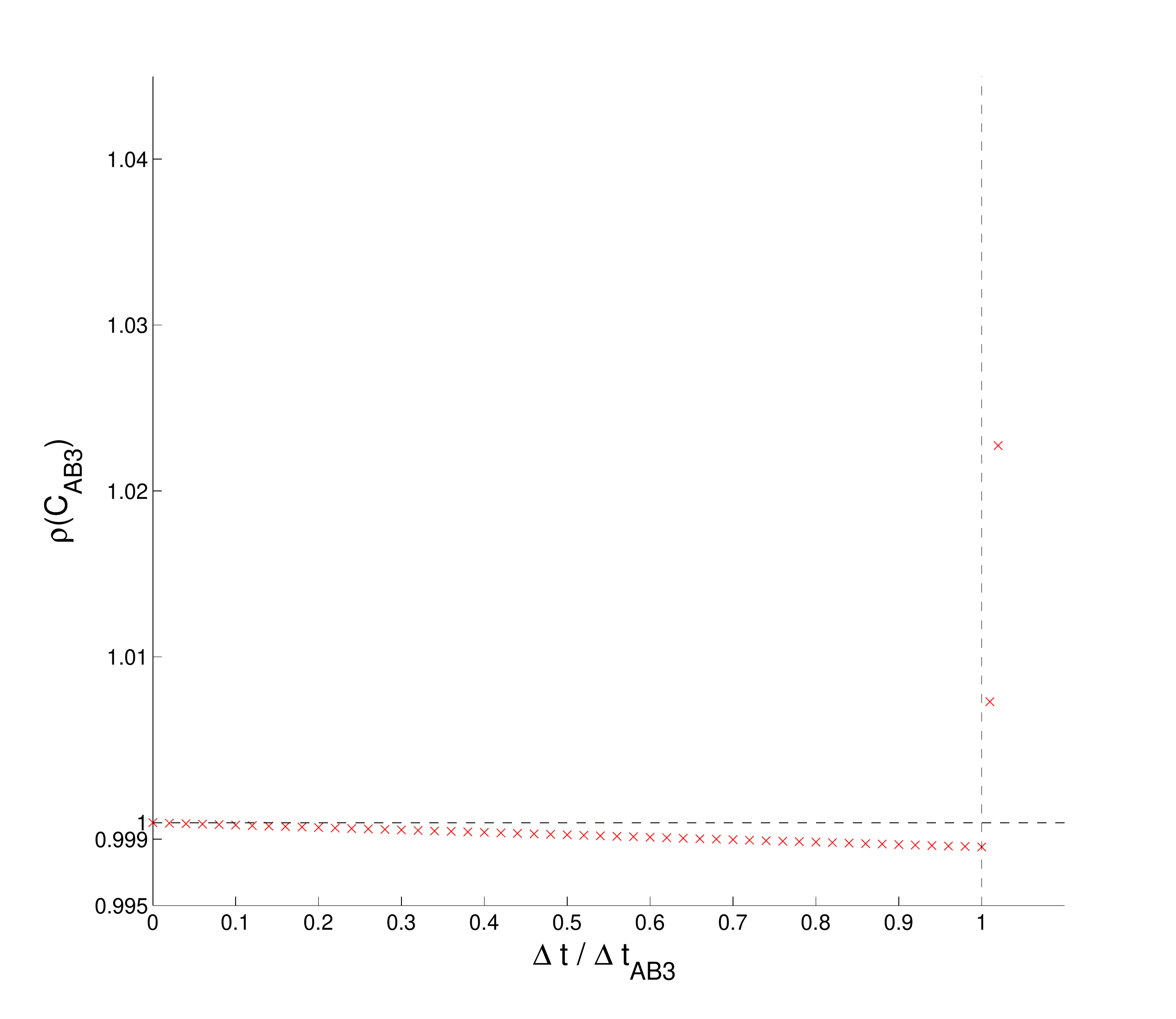}  & \includegraphics[scale=0.25]{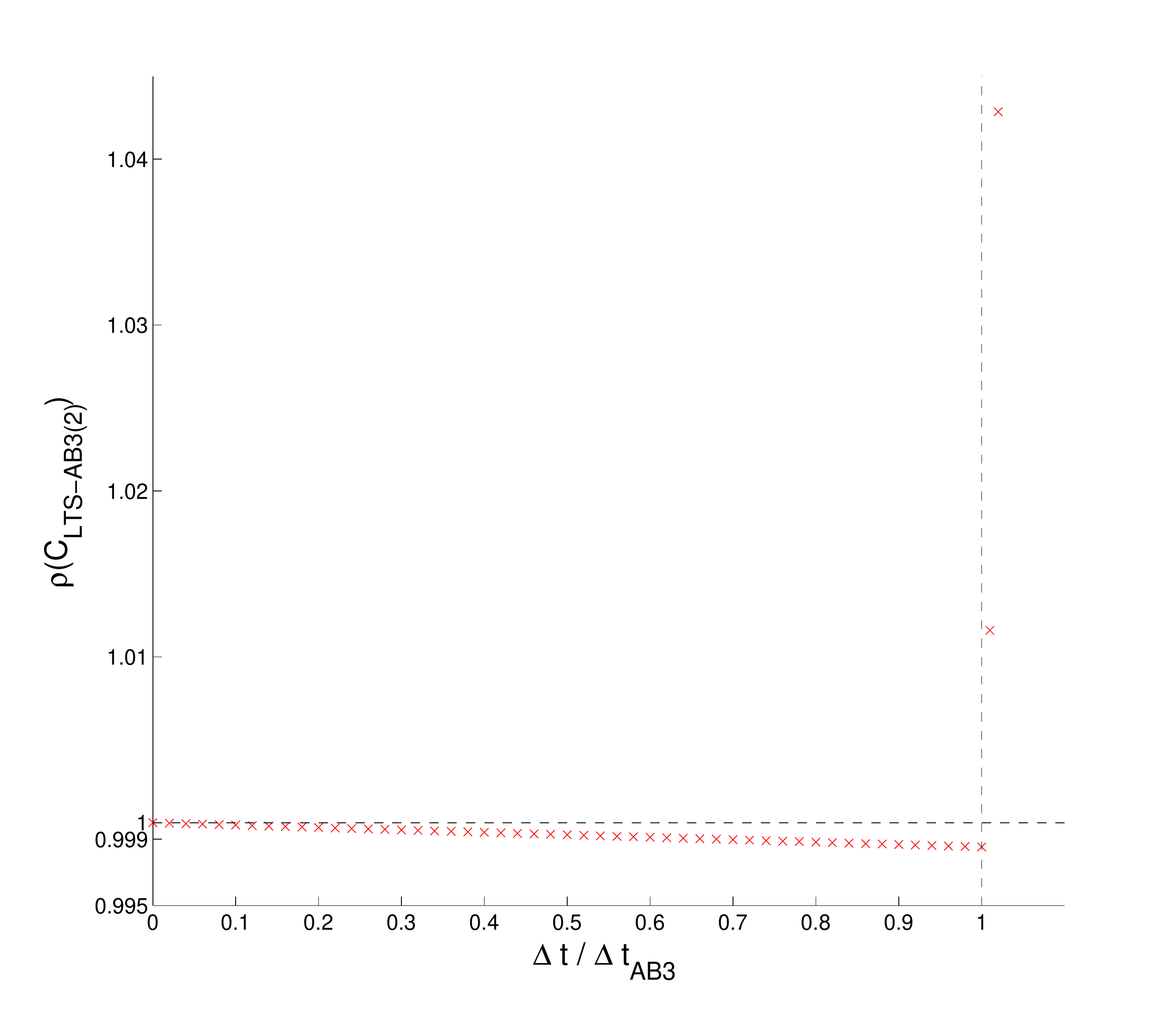}  
\end{tabular}
\caption{The classical AB$3$ and the LTS-AB$3$($2$) schemes combined with ${\cal P}^2$ continuous FE: the spectral radius of ${\mathbf C}_{AB3}$ (left) 
and ${\mathbf C}_{LTS-AB3(2)}$ (right) is shown for varying $\Delta t / \Delta t_{AB3}$.} 
\label{plot_eigs_AB3}
\end{figure}

Next, we consider the third-order Adams-Bashforth scheme and combine it with a ${\cal P}^2$ continuous FE discretization 
with mass lumping. We choose $h^{\mbox{\scriptsize coarse}} = 0.2$, which yields the maximal time-step 
$\Delta t_{AB3} = 0.029$ for $p=1$. Again, we refine 
by a factor $p = 2$ those elements that lie inside the interval $[2,4]$, that is $h^{\mbox{\scriptsize fine}} = 0.1$. 
Hence, for every time-step $\Delta t$, we shall take two steps of size $\Delta \tau = \Delta t / 2$ in the refined region with the third-order LTS-AB$3$($2$) scheme. 

To study its stability properties, we again reformulate both the classical AB$3$ and the LTS-AB$3$($2$) schemes as 
one-step methods:
$$
\left ( \begin{array}{l} {\mathbf y}_{n+1} \\ {\mathbf y}_{n} \\ {\mathbf y}_{n-1} \end{array} \right ) = 
{\mathbf C}_{AB3} \left ( \begin{array}{l} {\mathbf y}_{n} \\ {\mathbf y}_{n-1} \\ {\mathbf y}_{n-2} \end{array} \right ) 
\,, \quad
%%%%%
\left ( \begin{array}{l} 
{\mathbf y}_{n+1} \\ \widetilde {\mathbf y}_{1/2} \\ {\mathbf y}_{n} \\ {\mathbf y}_{n-1}
\end{array} \right ) = 
{\mathbf C}_{LTS-AB3(2)} 
\left ( \begin{array}{l} 
{\mathbf y}_{n} \\ \widetilde{\mathbf y}_{-1/2} \\ {\mathbf y}_{n-1} \\ {\mathbf y}_{n-2}
\end{array} \right ) \,
$$
and compute the spectral radius of ${\mathbf C}_{AB3}$ and ${\mathbf C}_{LTS-AB3(2)}$ for varying $\Delta t$. 
In the right frame of Fig.~\ref{plot_eigs_AB3}, we observe that the spectral radius of ${\mathbf C}_{LTS-AB3(2)}$ lies below one 
for all time-steps $\Delta t \leq \Delta t_{AB3}$. Hence, the LTS-AB$3$($2$) scheme is stable up to the maximal time-step allowed by 
the standard third-order Adams-Bashforth method on an equidistant mesh; therefore its CFL condition is optimal.

\begin{figure}[t]
\centering
\begin{tabular}{cc}
\includegraphics[scale=0.24]{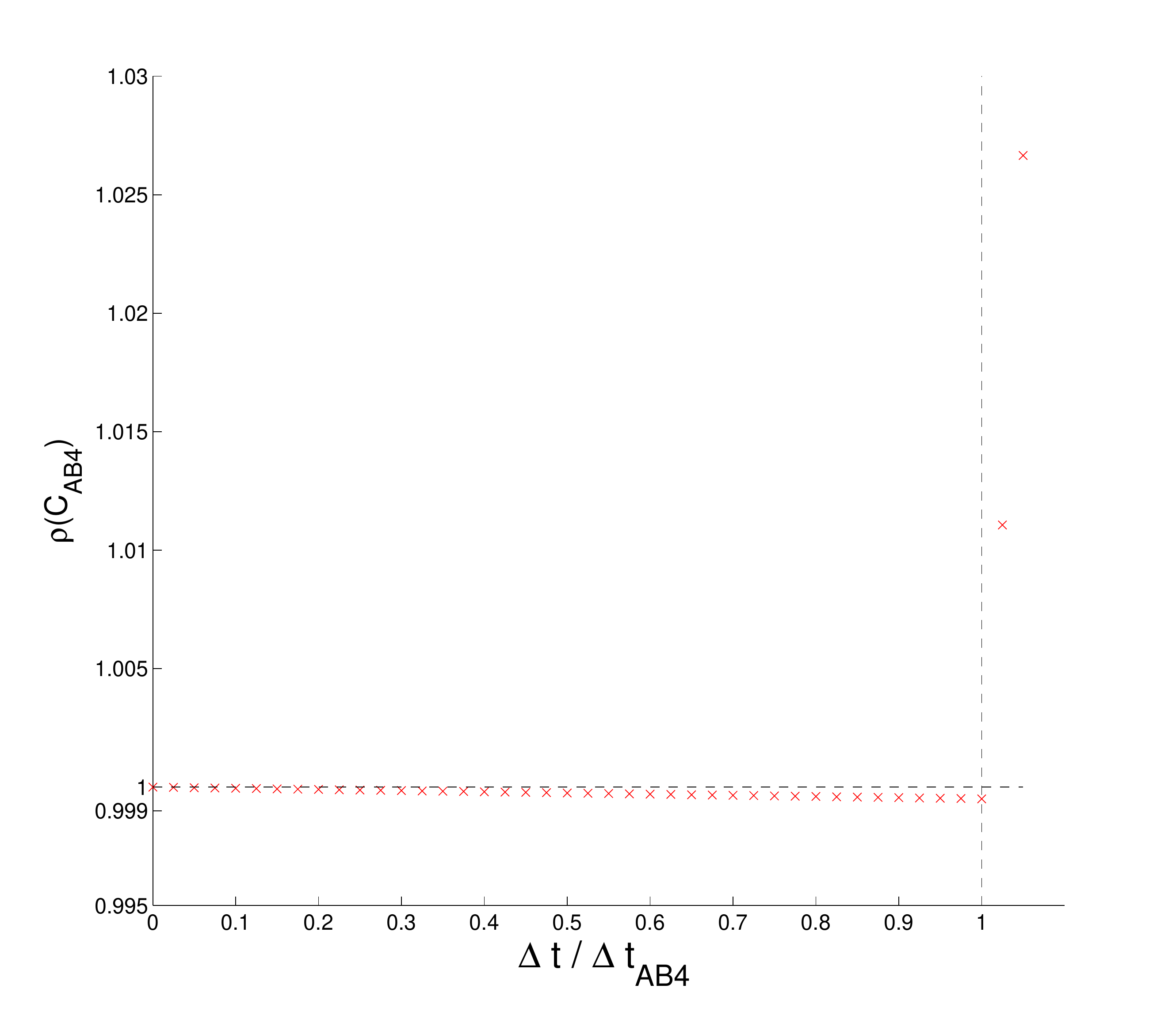}  & \includegraphics[scale=0.24]{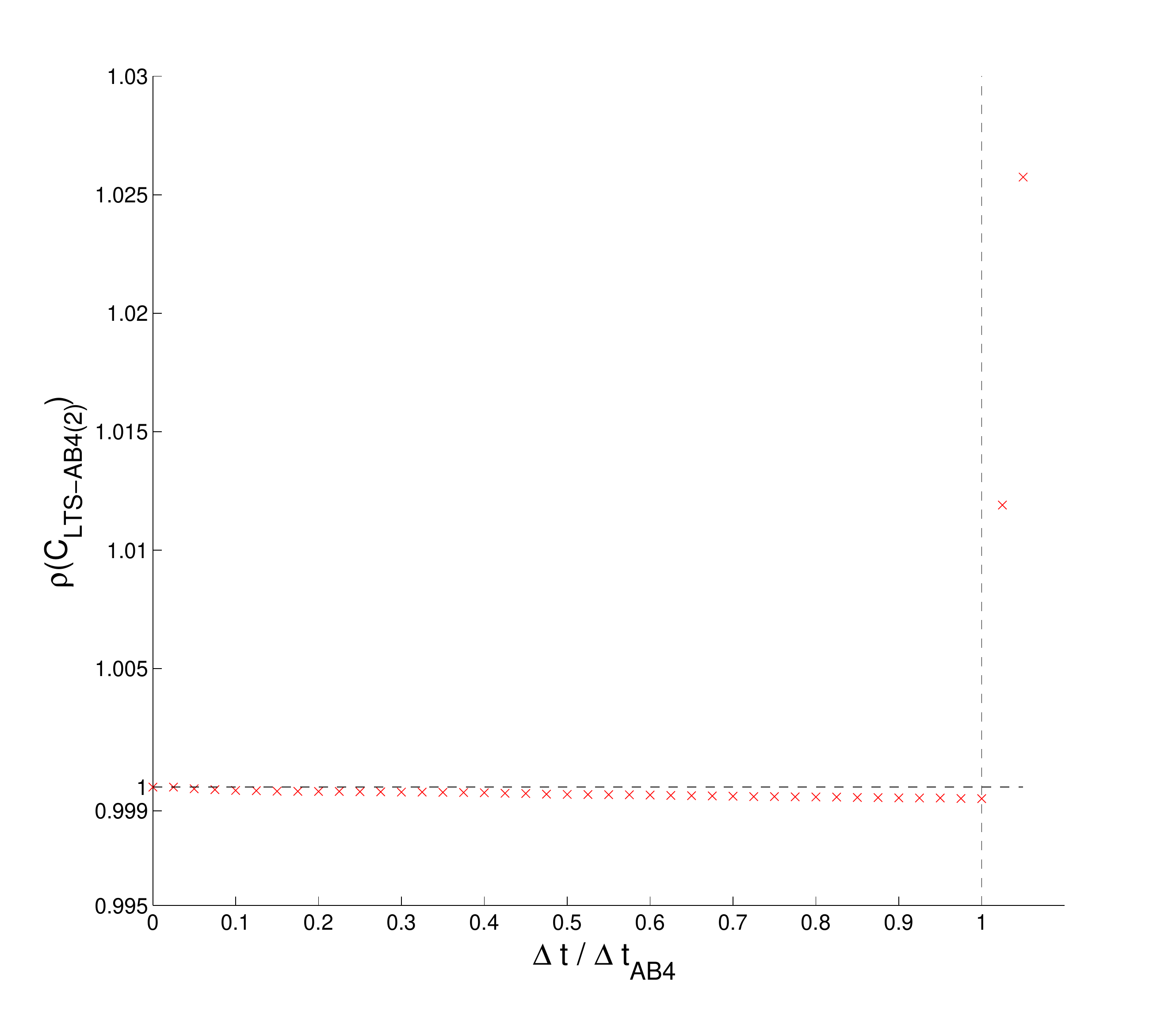}  
\end{tabular}
\caption{The classical AB$4$ and the LTS-AB$4$($2$) schemes combined with ${\cal P}^3$ continuous FE: the spectral radius of ${\mathbf C}_{AB4}$ (left) 
and ${\mathbf C}_{LTS-AB4(2)}$ (right) is shown for varying $\Delta t / \Delta t_{AB4}$.} 
\label{plot_eigs_AB4}
\end{figure}

\begin{figure}[t]
\centering
\begin{tabular}{cc}
\includegraphics[scale=0.24]{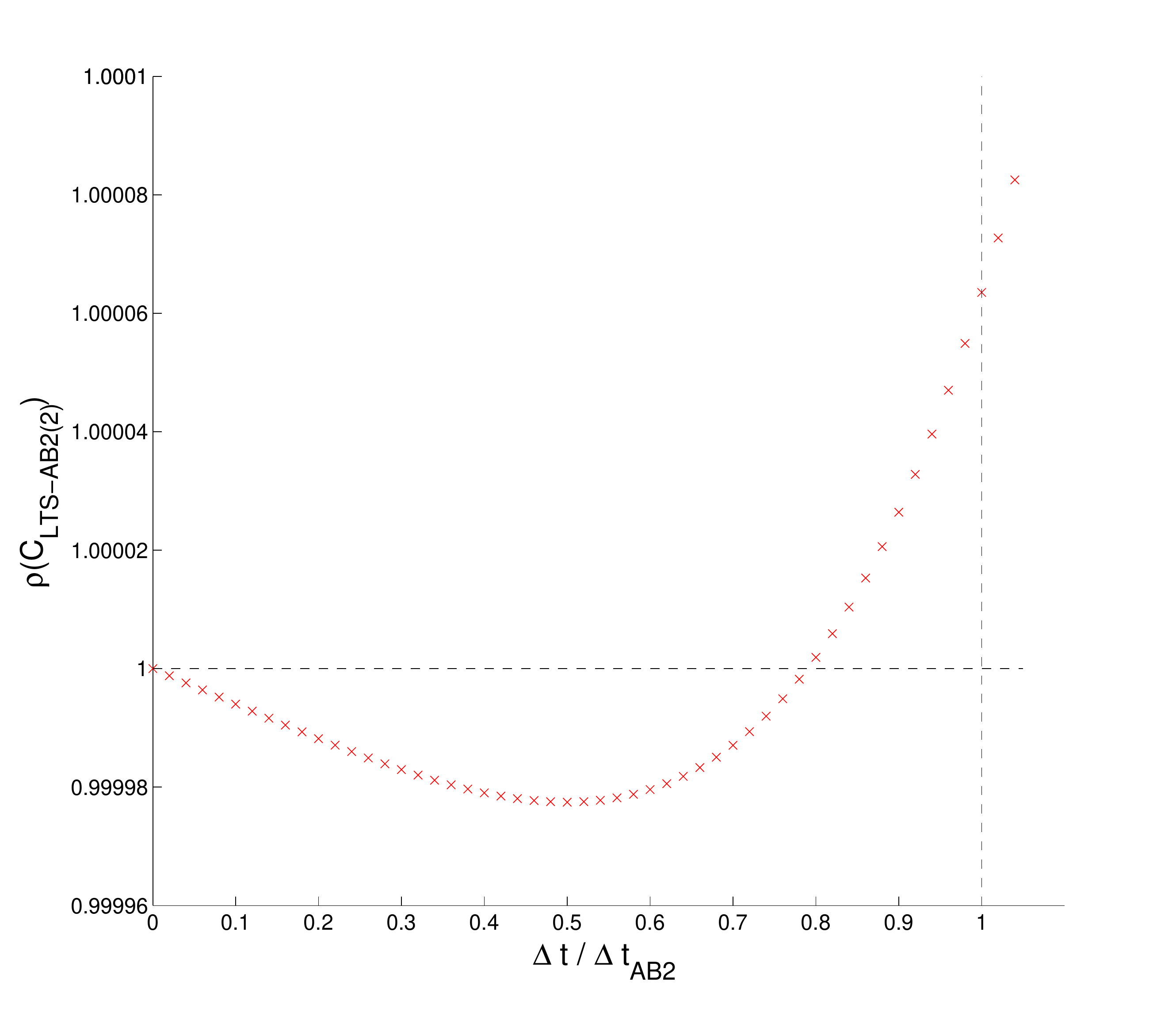}  & \includegraphics[scale=0.24]{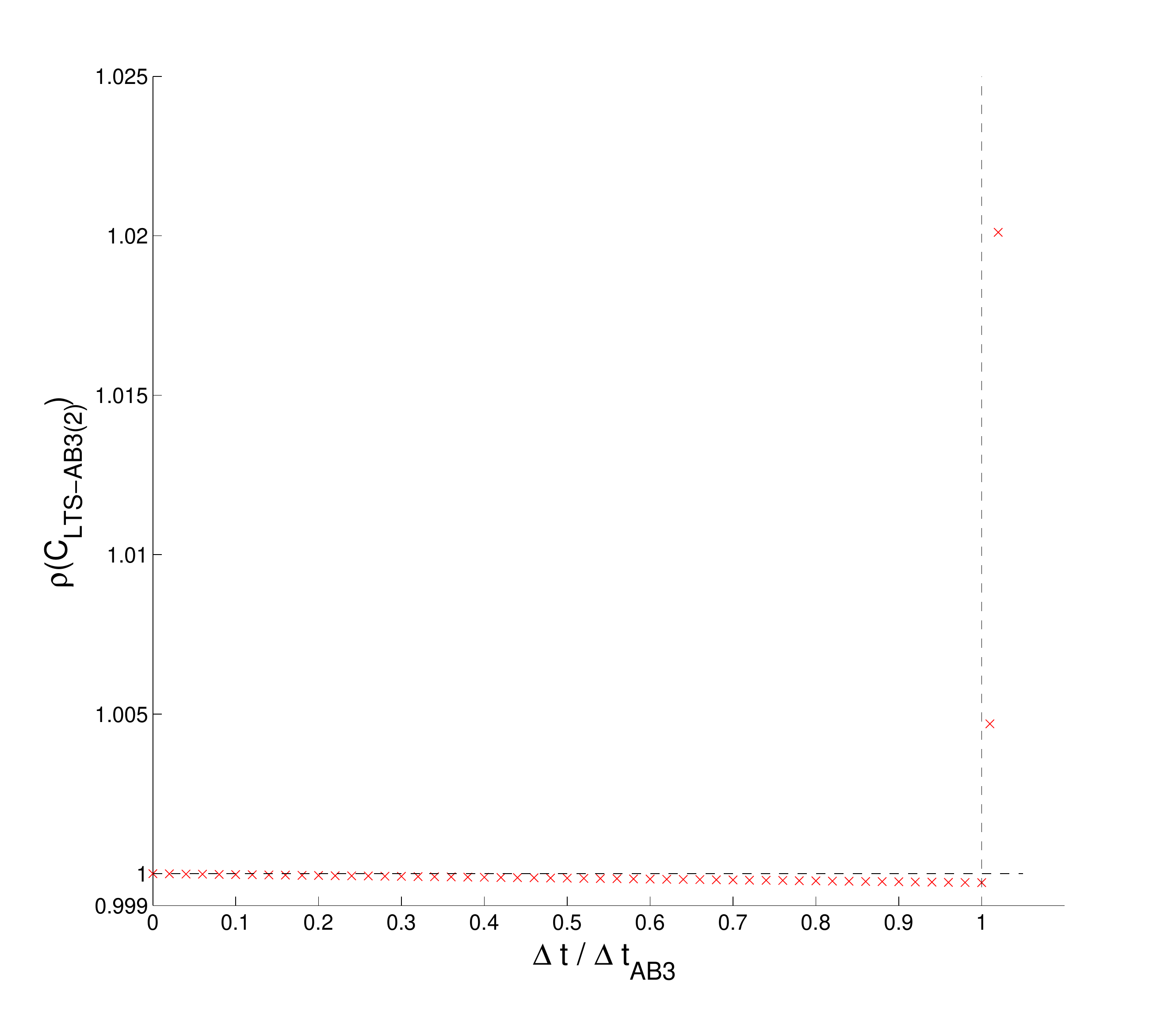}  
\end{tabular}
\caption{The $k$-th order LTS-AB$k$($2$) scheme combined with IP-DG ${\cal P}^{k-1}$-elements: the spectral radius of ${\mathbf C}_{LTS-ABk(2)}$ with 
$k=2$ (left) and $k=3$ (right) is shown for varying $\Delta t / \Delta t_{ABk}$.} 
\label{plot_eigs_AB_IPDG}
\end{figure}

Finally, to study the stability of the fourth-order LTS scheme, we consider a ${\cal P}^3$ continuous FE discretization with mass lumping. Again, 
we choose $h^{\mbox{\scriptsize coarse}} = 0.2$, which now yields the maximal time-step $\Delta t_{AB4} = 0.0099$ for 
$p=1$ and refine by a factor $p = 2$ those elements that lie inside the interval $[2,4]$. Hence, for every time-step $\Delta t$, we 
shall use the fourth-order time-stepping scheme LTS-AB$4$($2$) with $\Delta \tau = \Delta t / 2$ in the refined region. After refomulating the 
AB$4$ and the LTS-AB$4$($2$) schemes as one-step methods
$$
\left ( \begin{array}{l} {\mathbf y}_{n+1} \\ {\mathbf y}_{n} \\ {\mathbf y}_{n-1} \\ {\mathbf y}_{n-2} \end{array} \right ) = 
{\mathbf C}_{AB4} \left ( \begin{array}{l} {\mathbf y}_{n} \\ {\mathbf y}_{n-1} \\ {\mathbf y}_{n-2} \\ {\mathbf y}_{n-3} \end{array} \right ) 
\,, \quad
%%%%%
\left ( \begin{array}{l} 
{\mathbf y}_{n+1} \\ \widetilde {\mathbf y}_{1/2} \\ {\mathbf y}_{n} \\ \widetilde{\mathbf y}_{-1/2} \\ {\mathbf y}_{n-1} \\ \widetilde{\mathbf y}_{-3/2}
\end{array} \right ) = 
{\mathbf C}_{LTS-AB4(2)} 
\left ( \begin{array}{l} 
{\mathbf y}_{n} \\ \widetilde{\mathbf y}_{-1/2} \\ {\mathbf y}_{n-1} \\ \widetilde{\mathbf y}_{-3/2} \\ {\mathbf y}_{n-2} \\ {\mathbf y}_{n-3}
\end{array} \right ) \,,
$$
we compute the spectral radius of ${\mathbf C}_{AB4}$ and ${\mathbf C}_{LTS-AB4(2)}$ for varying $\Delta t / \Delta t_{AB4}$. As shown in the right frame of 
Fig.~\ref{plot_eigs_AB4}, the spectral radius of ${\mathbf C}_{LTS-AB4(2)}$ lies below one for all time-steps $\Delta t \leq \Delta t_{AB4}$. Thus, 
the LTS-AB$4$($2$) scheme is stable up to the optimal time-step.

So far, we have restricted the detailed numerical stability study of the LTS schemes to standard continuous FE 
discretizations, where the accuracy of the spatial discretization matches that of the time discretization.
We obtain similar stability results, when the LTS-AB$k$ schemes are combined with a $k$-th order IP-DG or nodal DG discretization in space. For instance, as shown 
in the left frame of Fig.~ \ref{plot_eigs_AB_IPDG}, the largest time-step allowed by the LTS-AB$2$(2) scheme when combined with IP-DG ${\cal P}^{1}$-elements 
($\alpha = 5$ in (\ref{param_alpha})) again is 
about 80\% of $\Delta t_{AB2}$. Similarly, the spectral radius of ${\mathbf C}_{LTS-AB3(2)}$, shown in the right frame of Fig.~\ref{plot_eigs_AB_IPDG} 
for varying $\Delta t / \Delta t_{AB3}$, reveals that the LTS-AB$3$(2) scheme, when combined with IP-DG ${\cal P}^{2}$-elements (with $\alpha = 12$), is again stable for the optimal 
time-step $\Delta t_{AB3}$.

Finally to illustrate the effect of $\sigma$ on the stability, we display in Table \ref{table_sigma}
the time-step ratio $\Delta t_2 / \Delta t_{ABk}$ for varying $\sigma$, 
either with a conforming or an IP-DG finite element discretization for $k=2, 3, 4$. We observe that
the LTS-AB$k$ methods with $k \geq 3$ yield the optimal CFL condition independently of $\sigma$. In fact,  
they can even be used for $\sigma = 0$, that is in the undamped regime. In contrast, the LTS-AB$2$ scheme
typically yields only about 80\% of the optimal the CFL condition and can only be used if $\sigma >0$.  

\begin{table}[t!]
\begin{center}
\begin{tabular}{c||c|c||c|c||c|c} \hline
& \multicolumn{2}{c||}{LTS-AB2(2)} & \multicolumn{2}{c||}{LTS-AB3(2)} & \multicolumn{2}{c}{LTS-AB4(2)}\\
$\sigma$ & cont. FE & IP-DG & cont. FE & IP-DG & cont. FE & IP-DG \\ \hline
0 & - & - & 1.0 & 1.0 & 1.0 & 1.0 \\ \hline
0.001 & 0.79 & 0.8 & 1.0 & 1.0 & 1.0 & 1.0 \\ \hline
0.1 & 0.8 & 0.81 & 1.0 & 1.0 & 1.0 & 1.0 \\ \hline
1 & 0.8 & 0.81 & 1.0 & 1.0 & 1.0 & 1.0 \\ \hline
10 & 0.86 & 0.87 & 1.0 & 1.0 & 1.0 & 1.0 \\ \hline
\end{tabular} 
\caption{The LTS-AB$k$(2) scheme for $k=2, 3, 4$ combined with ${\mathcal P}^{k-1}$ continuous FE or IP-DG ${\mathcal P}^{k-1}$ elements: the ratio 
         $\Delta t_2 / \Delta t_{ABk}$ is shown for varying $\sigma$.}
\label{table_sigma}
\end{center}
\end{table}

\begin{remark}
In summary, our numerical experiments for
       $h=0.1, 0.2$, $1\leq p\leq 13$, and $0\leq \sigma\leq 10$ 
show for a spatial discretization with standard continuous, IP-DG or nodal DG finite elements:
\begin{itemize}
 \item the maximal time-step $\Delta t_{p}$ allowed by the LTS-AB\,$2$($p$) scheme is about 80 \% of the optimal time-step $\Delta t_{AB2}$ for $\sigma > 0$;
 \item the CFL stability condition of the LTS-AB\,$3$($p$) and LTS-AB\,$4$($p$) schemes is optimal.
\end{itemize}
These numerical results suggest that the above two claims probably also
hold for all other values of $p$, $h$ or $\sigma>0$.
\end{remark}

%%%%%%%%%%%%%%%%%%%%%%%%%%%%%%%%%%%%%%%%%%%%%%%%%%%%%%%%%%%%%%%%%%%%%%%%%%%%%%%%%%%%%%%%%%%%%%%%%%%%%%%%%%%%%%%%%%%%%%%%%%%
\subsection{Convergence}
We consider the one-dimensional homogeneous model problems (\ref{model_eq_1}) and (\ref{model_eq_2}) with constant wave speed $c = 1$ and 
damping coefficient $\sigma = 0.1$ on the interval $\Omega = [0\, , \,6]$. The initial conditions are chosen to yield 
the exact solution
\begin{equation*} \begin{split}
u(x, t) & = \frac{2 e^{-\frac{\sigma t}{2}}}{\sqrt{4\pi^2 - \sigma^2}} \sin(\pi x) 
\sin\left ( \frac{t}{2} \sqrt{4\pi^2 - \sigma^2}\right ) \,, \\
v(x, t) & =  \frac{\partial u}{\partial t}(x, t) \,, \quad {\mathbf w} (x, t) = - \nabla u (x, t) \,.
\end{split} \end{equation*}
Again, we divide $\Omega$ into three equal parts. The left and right intervals, $[0, 2]$ and $[4, 6]$, 
respectively, are discretized with an equidistant mesh of size $h^{\mbox{\scriptsize coarse}}$, whereas the interval
$[2, 4]$ is discretized with an equidistant mesh of size 
$h^{\mbox{\scriptsize fine}} = h^{\mbox{\scriptsize coarse}} / p$. Hence, the two outer
intervals correspond to the coarse region and the inner interval $[2\, , \,4]$ to the refined region. 
The first $k-1$ time-steps of each LTS-AB$k$($p$) scheme are initialized by using the exact solution.

\begin{figure}[t!]
\centering
\begin{tabular}{cc}
\includegraphics[scale=0.2]{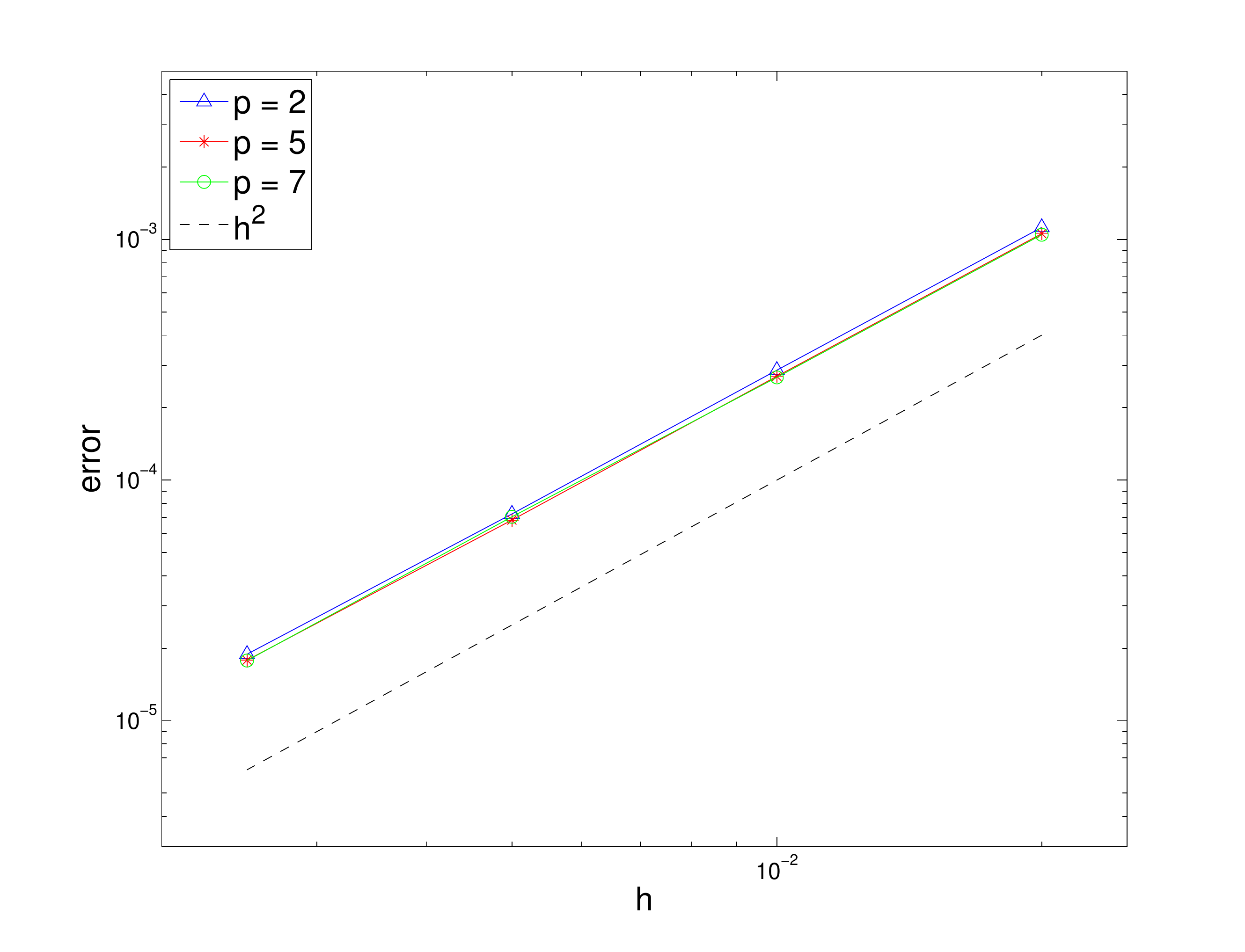}  & 
\includegraphics[scale=0.2]{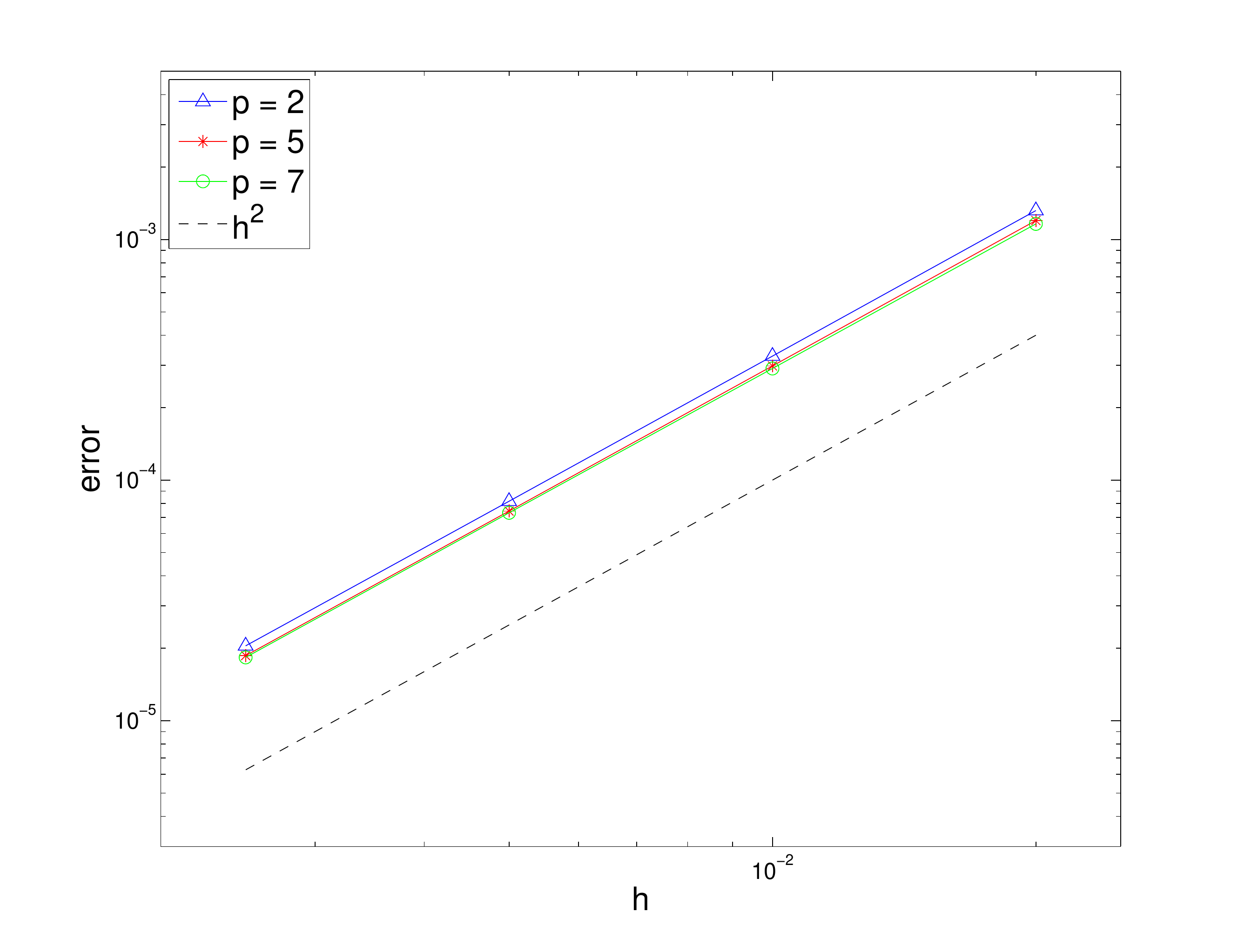}  \\
{\footnotesize (a) continuous FE ($h$ = 0.02, 0.01, 0.005, 0.0025)} & {\footnotesize (b) IP-DG ($h$ = 0.02, 0.01, 0.005, 0.0025)}
\end{tabular}
\begin{tabular}{c}
\includegraphics[scale=0.2]{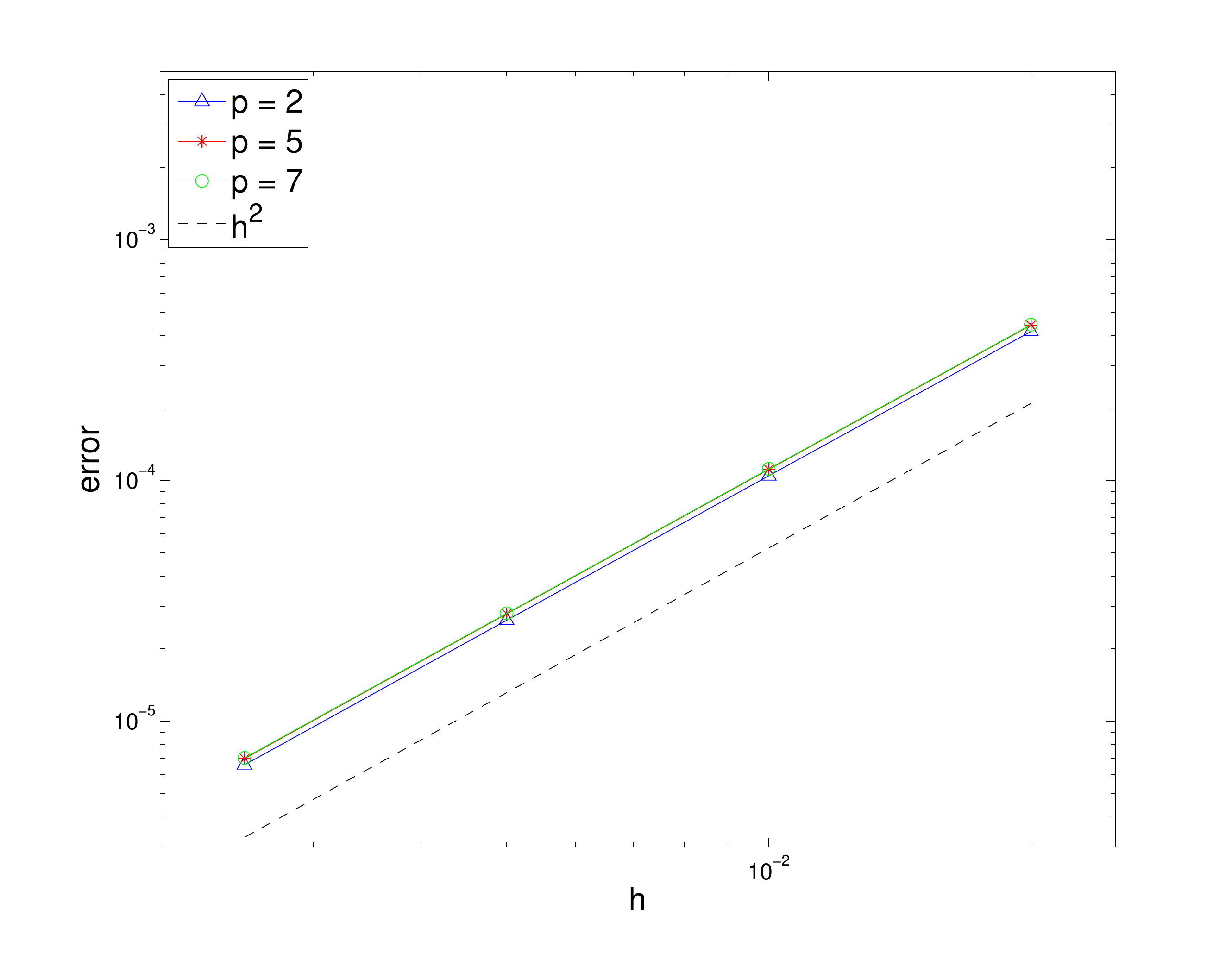} \\ {\footnotesize (c) nodal DG ($h$ = 0.02, 0.01, 0.005, 0.0025)}
\end{tabular}
\caption{LTS-AB$2$($p$) error vs.\ $h = h^{\mbox{\scriptsize coarse}}$ for ${\cal P}^1$ finite elements with $p=2, 5, 7$.} \label{1d_err_P1}
\end{figure}

First, we consider a ${\cal P}^1$ continuous FE discretization with mass lumping \cite{Coh02, CJT94} and a sequence of increasingly finer meshes. For every time-step $\Delta t$, we shall 
take $p \geq 2$ local steps of size $\Delta \tau = \Delta t / p$ in the refined region, with the second-order 
time-stepping scheme LTS-AB$2$($p$). According to our previous results on stability from Section 4.1, we set $\Delta t = 0.8 \cdot \Delta t_{AB2}$; note that 
$\Delta t_{AB2}$ depends on the mesh size. As we systematically reduce the global mesh size 
$h^{\mbox{\scriptsize coarse}}$, while simultaneously reducing $\Delta t$, we monitor the $L^2$ space-time error 
in the numerical solution $\| u(\cdot, T) - u^h(\cdot, T)\|_{L^2(\Omega)}$ at the final time $T = 10$. In frame (a) 
of Fig.~\ref{1d_err_P1}, the numerical error is shown vs.\ the mesh size $h = h^{\mbox{\scriptsize coarse}}$: 
regardless of the number of local time-steps $p =2$, 5 or 7, the numerical method converges with order two.

We now repeat the same experiment with the IP-DG ($\alpha = 5$ in (\ref{param_alpha})) and the nodal DG discretizations with ${\cal P}^1$-elements. As 
shown in frames (b) and (c) of Fig.~\ref{1d_err_P1}, the LTS-AB$2$($p$) method again yields overall second-order 
convergence independently of $p$.

Next, we consider the third-order LTS-AB$3$($p$) scheme and combine it with any one of the three FE discretizations with ${\cal P}^2$-elements. 
Thus, we expect all numerical schemes to exhibit overall third-order convergence with respect to the $L^2$-norm. We set $\Delta t = \Delta t_{AB3}$, the 
largest possible time-step allowed by the third-order Adams-Bashforth approach on an equidistant mesh with 
$h = h^{\mbox{\scriptsize coarse}}$. In Fig.~\ref{1d_err_P2} we display the space-time $L^2$-errors of the 
numerical solutions at $T = 10$ for a sequence of meshes and different values of $p$. The continuous FEM with mass lumping, 
the IP-DG method (with $\alpha = 12$) and the nodal DG discretization  all yield the expected third-order convergence.

Finally, to demonstrate the order of convergence of the fourth-order LTS-AB$4$($p$) scheme, we consider 
again the continuous FE or the two DG discretizations with ${\cal P}^3$-elements. Here, we set the penalty 
parameter $\alpha = 20$ for the IP-DG method and let $\Delta t =\Delta t_{AB4}$, the corresponding largest possible time-step allowed by 
the Adams-Bashforth approach of order four on an equidistant mesh with $h = h^{\mbox{\scriptsize coarse}}$. We 
monitor the $L^2$ space-time error in the numerical solution at $T = 10$ for the sequence of meshes and different values of $p$. Again, 
the numerical results shown in Fig.~\ref{1d_err_P3} corroborate the expected fourth-order convergence.

\begin{figure}[t!]
\centering
\begin{tabular}{cc}
\includegraphics[scale=0.2]{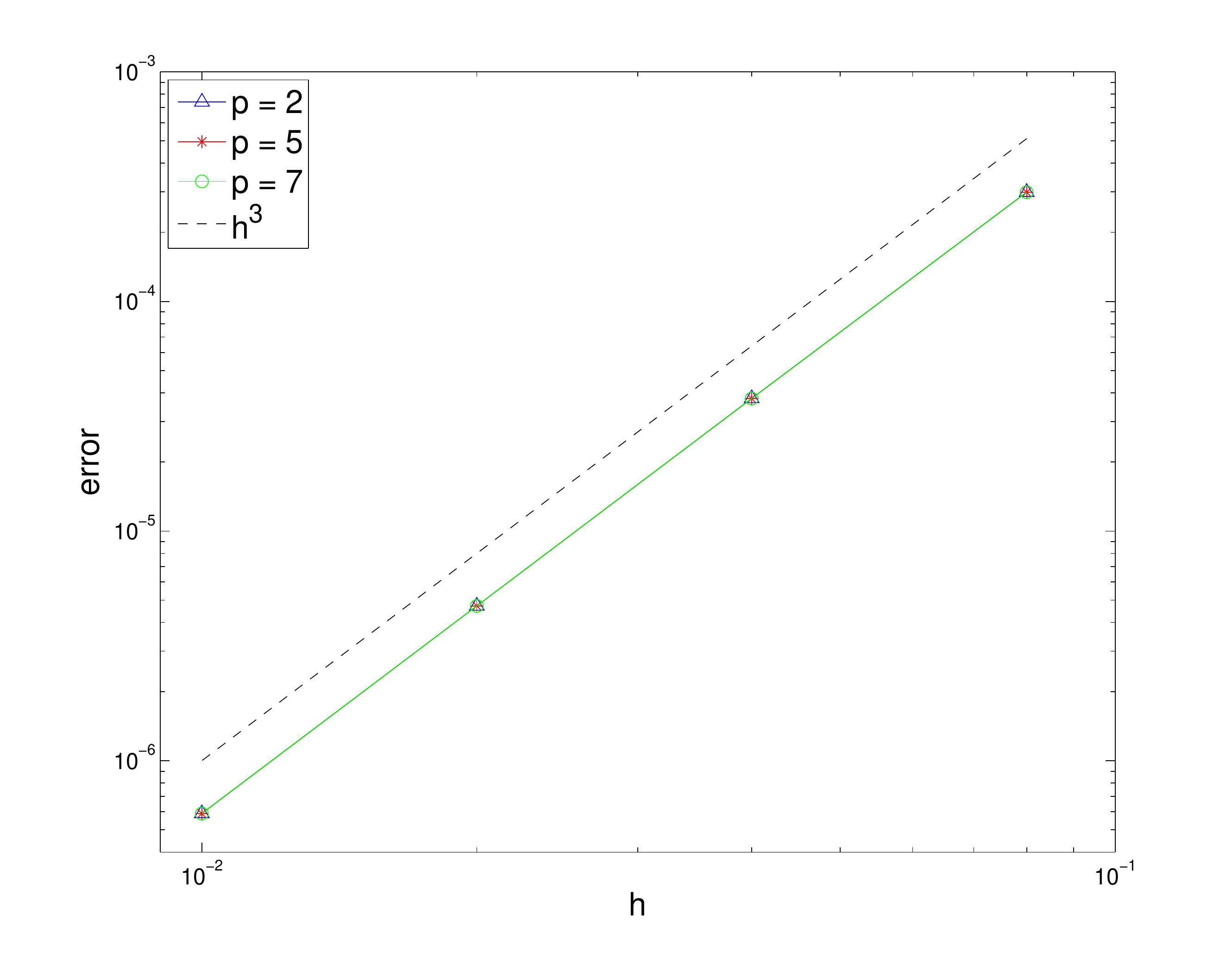}  & 
\includegraphics[scale=0.2]{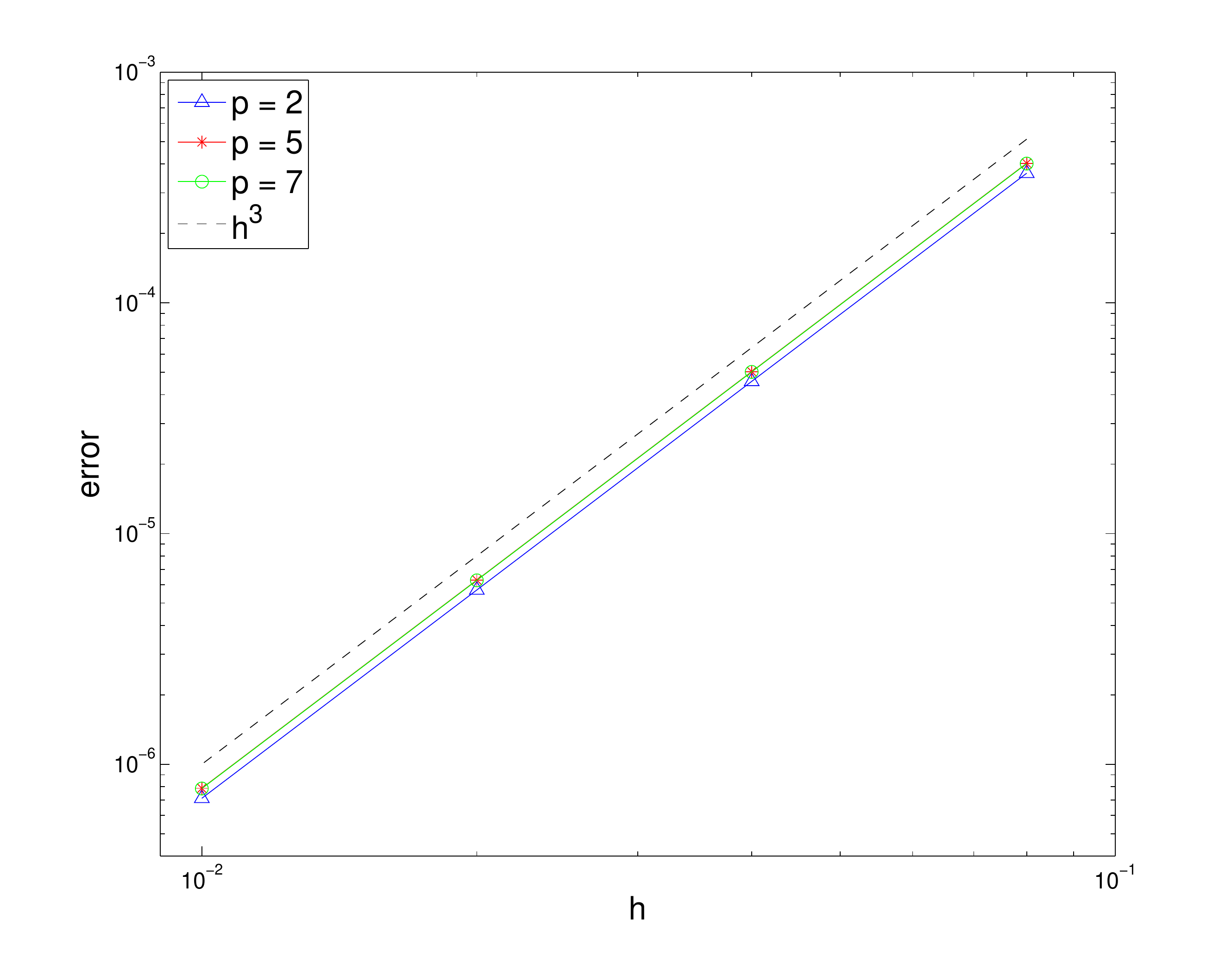}  \\
{\footnotesize (a) continuous FE ($h$ = 0.08, 0.04, 0.02, 0.01)} & (b) {\footnotesize IP-DG ($h$ = 0.08, 0.04, 0.02, 0.01)}
\end{tabular}
\begin{tabular}{c}
\includegraphics[scale=0.2]{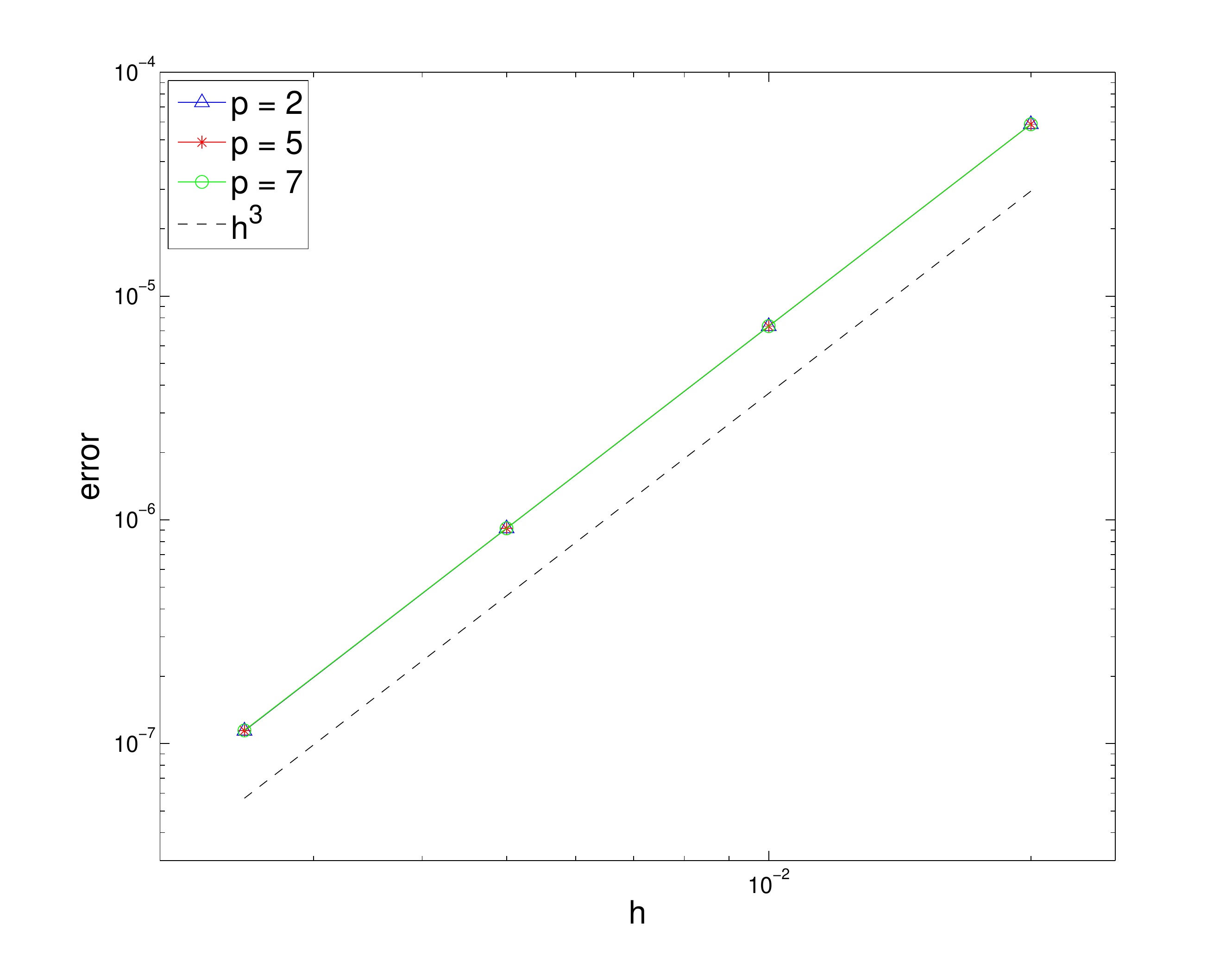} \\ {\footnotesize (c) nodal DG ($h$ = 0.02, 0.01, 0.005, 0.0025)}
\end{tabular}
\caption{LTS-AB$3$($p$) error vs.\ $h = h^{\mbox{\scriptsize coarse}}$ for ${\cal P}^2$ finite elements with $p=2, 5, 7$.} \label{1d_err_P2}
\end{figure}

\begin{figure}[t!]
\centering
\begin{tabular}{cc}
\includegraphics[scale=0.2]{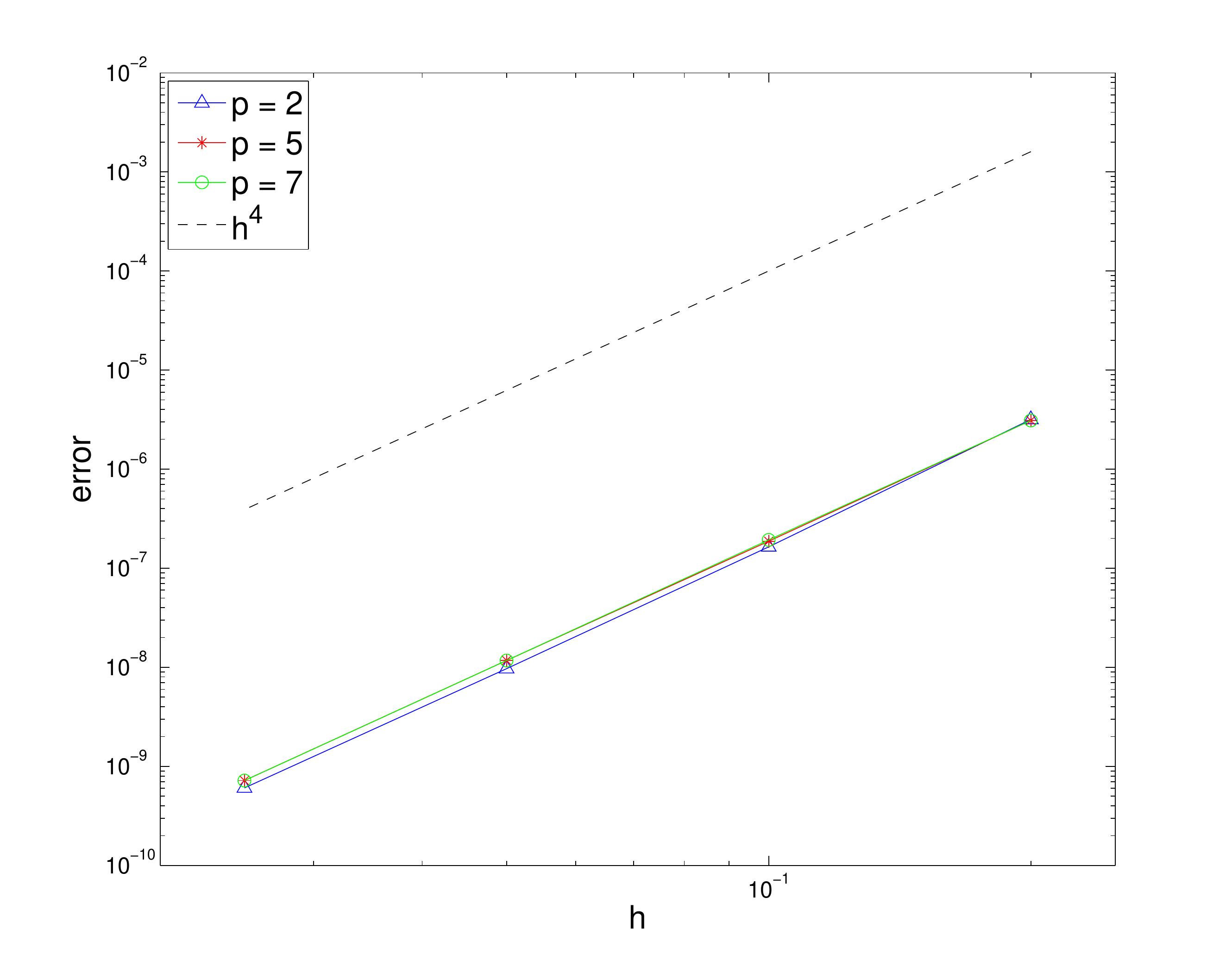}  & 
\includegraphics[scale=0.2]{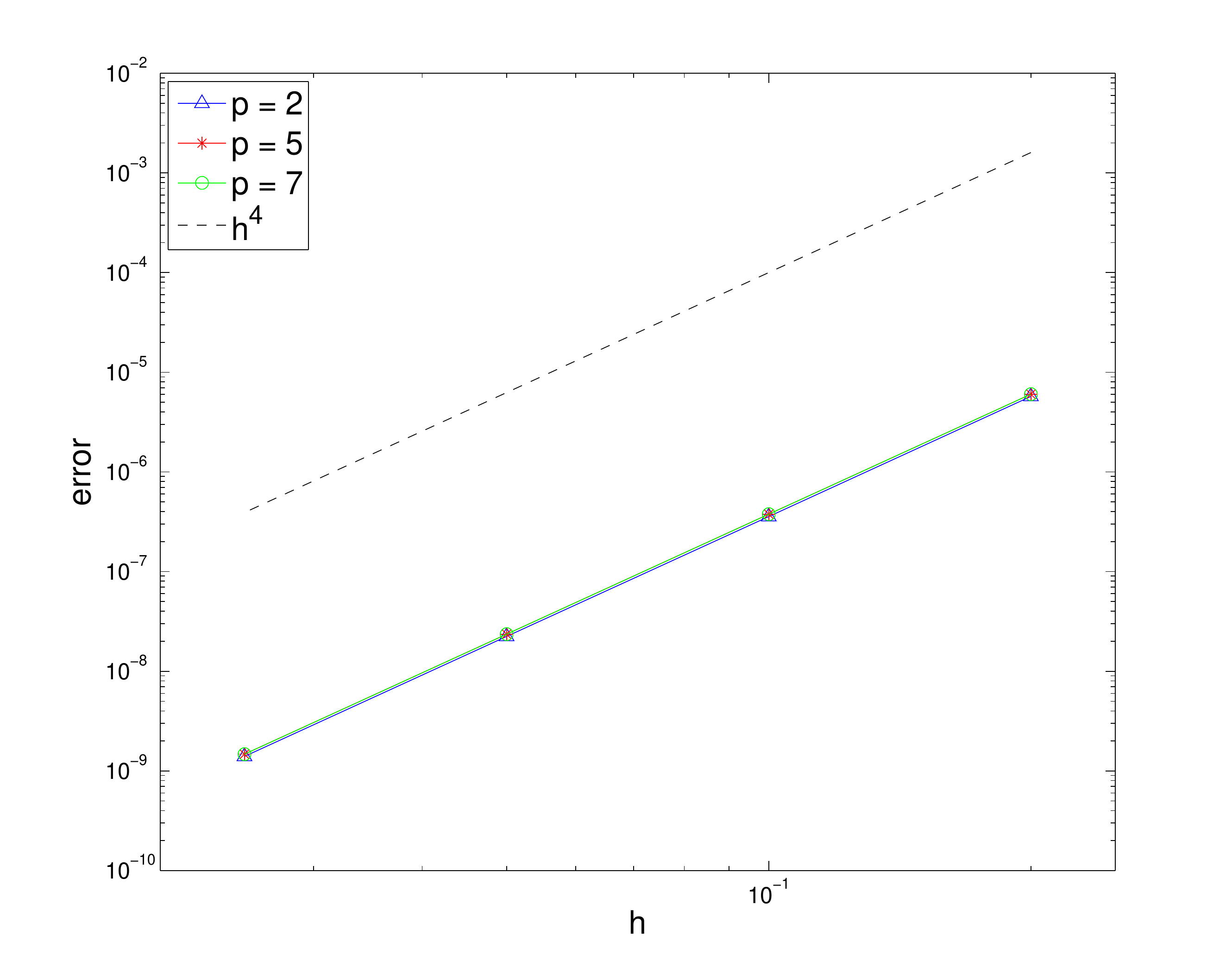}  \\
{\footnotesize (a) continuous FE ($h$ = 0.2, 0.1, 0.05, 0.025)} & {\footnotesize (b) IP-DG ($h$ = 0.2, 0.1, 0.05, 0.025)}
\end{tabular}
\begin{tabular}{c}
\includegraphics[scale=0.2]{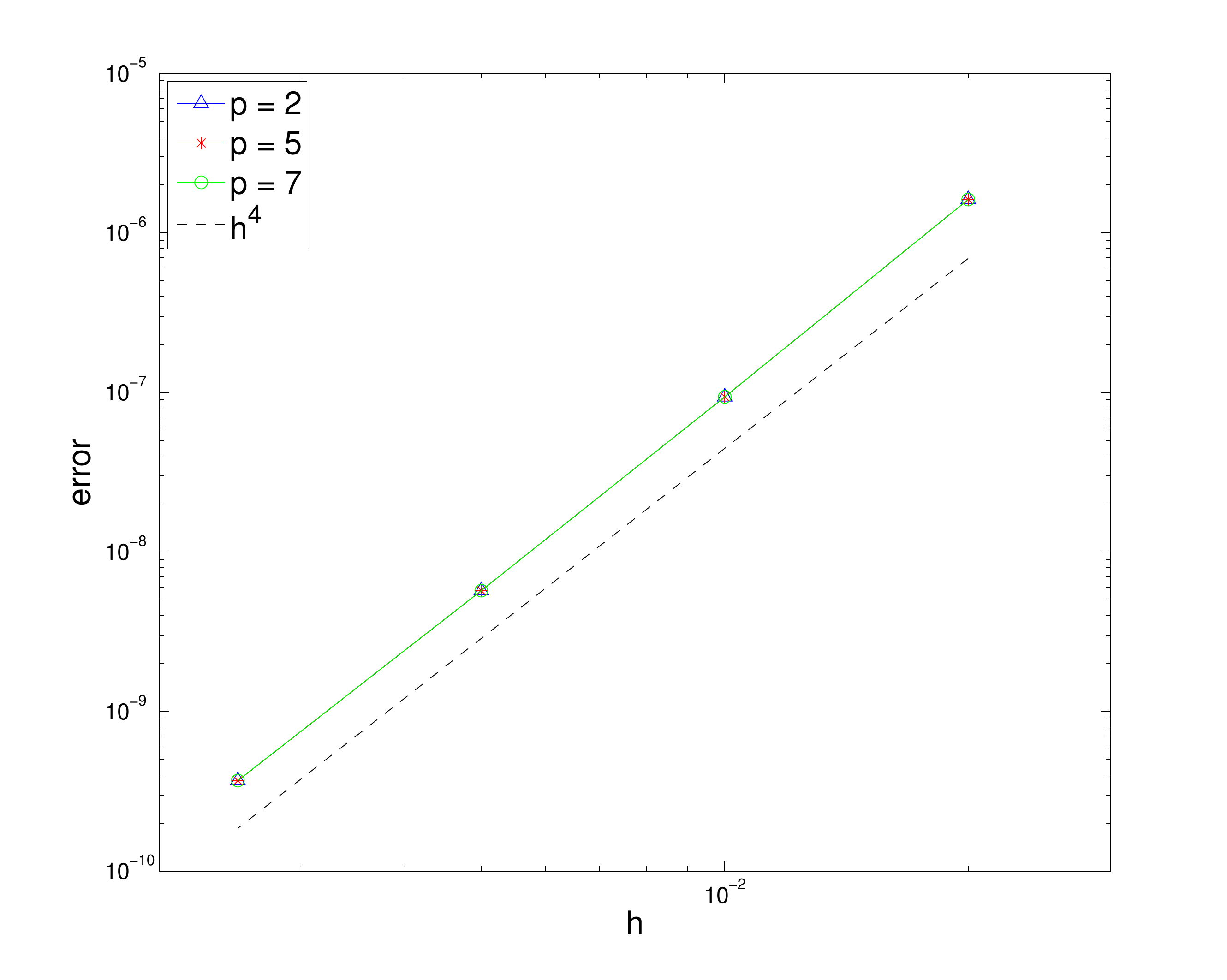} \\ {\footnotesize (c) nodal DG ($h$ = 0.02, 0.01, 0.005, 0.0025)}
\end{tabular}
\caption{LTS-AB$4$($p$) error vs.\ $h = h^{\mbox{\scriptsize coarse}}$ for ${\cal P}^3$ finite elements with $p=2, 5, 7$.} \label{1d_err_P3}
\end{figure}

\begin{remark}
We obtain similar convergence results for other values of $p$ and $\sigma$. In summary, we observe the expected convergence of order $k$ for the 
LTS-AB$k$($p$) schemes, regardless of the spatial FE discretization and independently of the number of local time-steps $p$ and the damping coefficient $\sigma$.
\end{remark}

%%%%%%%%%%%%%%%%%%%%%%%%%%%%%%%%%%%%%%%%%%%%%%%%%%%%%%%%%%%%%%%%%%%%%%%%%%%%%%%%%%%%%%%%%%%%%%%%%%%%%%%%%%%%%%%%%%%%%%%%%%%
\subsection{Two-dimensional example}
To illustrate the usefulness of the LTS approach, we consider (\ref{model_eq_1}) in the computational domain $\Omega$, 
shown in Fig.~\ref{mesh_antenna}: it corresponds to an initially rectangular domain of size 
$[0, 3] \times [0, 1]$ from which the shape of a roof mounted antenna of thickness 0.01 has ben cut out. We set the 
constant wave speed $c = 1$ and the constant damping coefficient $\sigma = 0.1$.  On the boundary of $\Omega$, we 
impose homogeneous Neumann instead of Dirichlet conditions and choose as initial conditions the vertical Gaussian plane wave 
\begin{equation*} \begin{split}
u_0(x, y) & = \exp \left ( -(x-x_0)^2 / \delta^2 \right ) \,, \quad \mbox{for all} \ (x,y) \in \Omega\,, \\
v_0(x, y) & = 0 \,, \quad \mbox{for all} \ (x,y) \in \Omega\,,
\end{split} \end{equation*}
centered about $x_0 = 1.3$ and of width $\delta=0.01$.

\begin{figure}[t]
\begin{center}
\includegraphics[width=.55\textwidth]{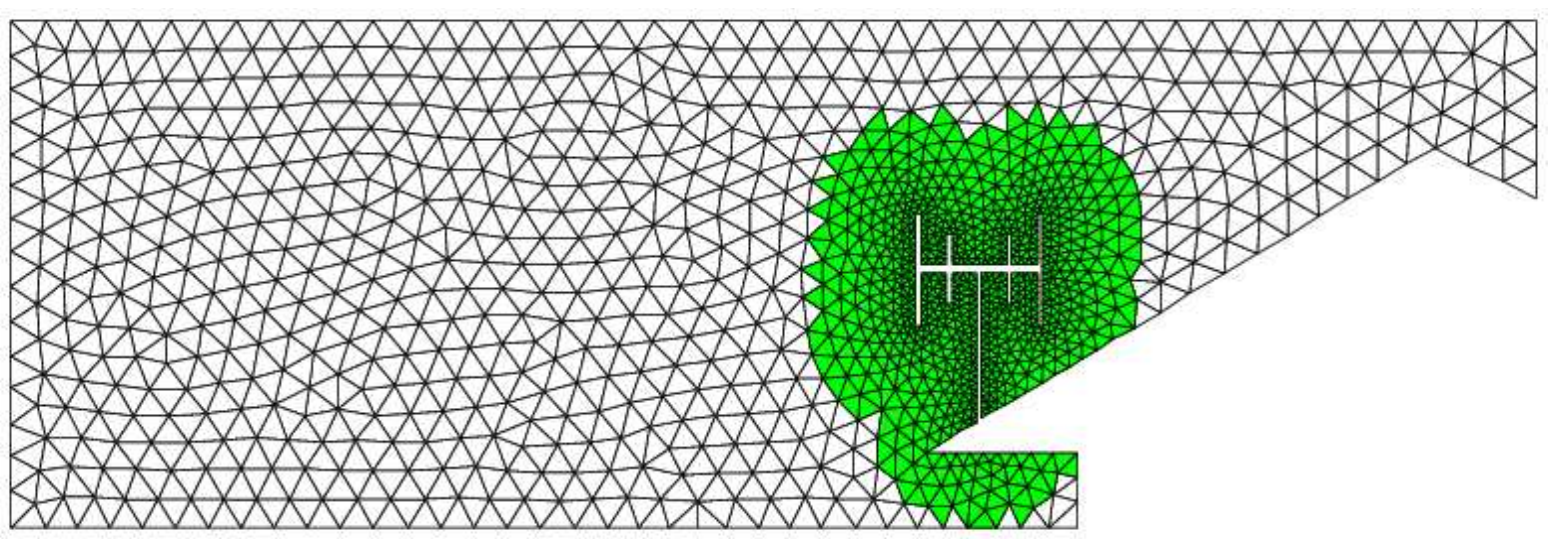} \hspace{1cm}
\includegraphics[scale = 0.25]{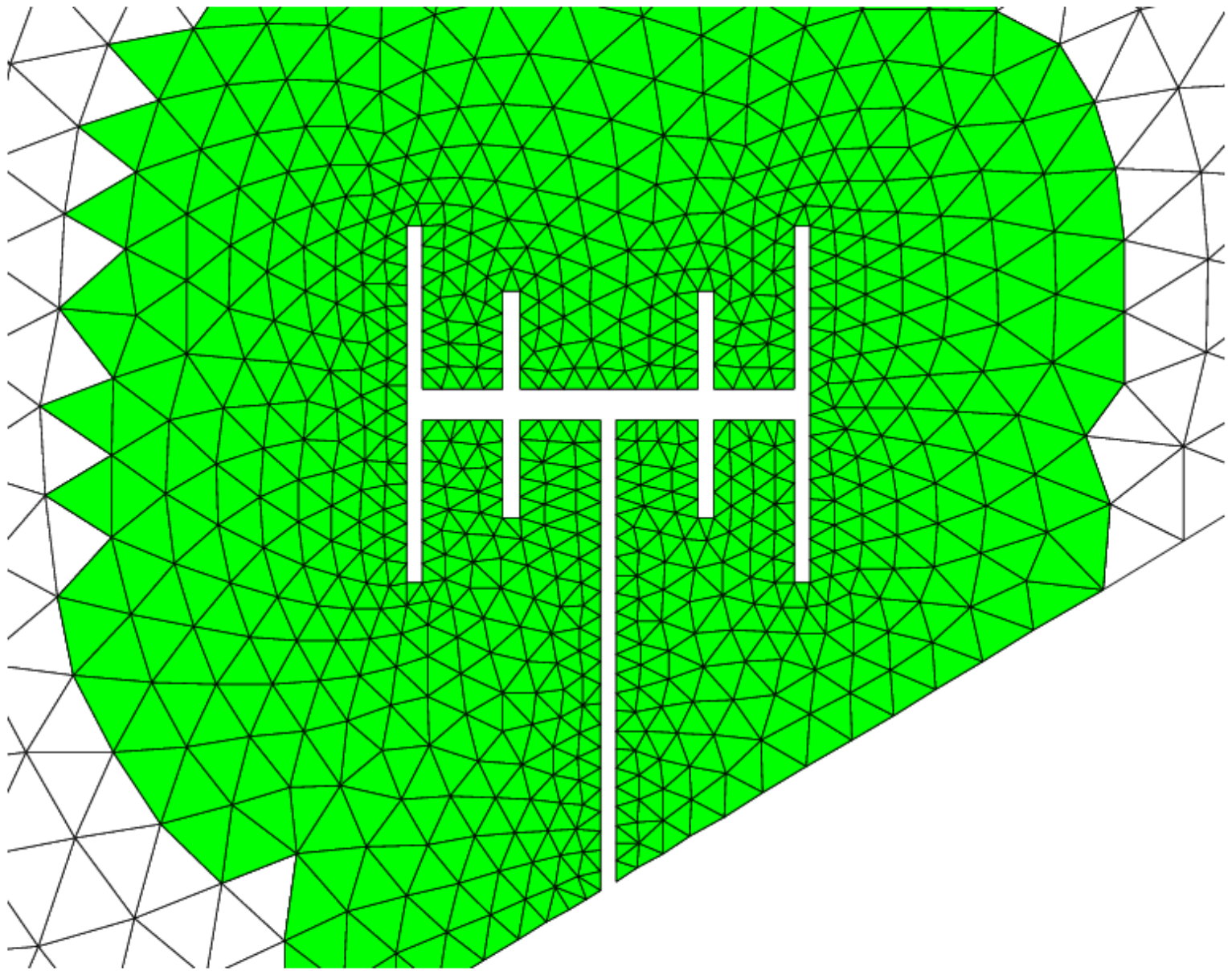}
\end{center}
\caption{The initial triangular mesh of the computational domain $\Omega$ (left); zoom on the ``fine'' mesh indicated by the darker (green) triangles (right).}
\label{mesh_antenna}
\end{figure}

For the spatial discretization we opt for ${\cal P}^2$ continuous finite elements with mass lumping \cite{CJRT01, Coh02}. First, 
$\Omega$ is discretized with triangles of maximal size $h^{\mbox{\scriptsize coarse}}~=~0.05$. However, such coarse triangles 
do not resolve the small geometric features of the antenna, which require 
$h^{\mbox{\scriptsize fine}} \approx h^{\mbox{\scriptsize coarse}} / 7$, as shown in Fig.~\ref{mesh_antenna}. 
Then, we successively refine the entire mesh three times, each time splitting every triangle into four. 
Since the initial mesh in $\Omega$ is unstructured, the boundary between the fine and the coarse mesh is not well-defined. 
Here we let the fine mesh correspond to all triangles with $h < 0.6 \, h^{\mbox{\scriptsize coarse}}$ in size, i.e. the darker (green) 
triangles in Fig.~\ref{mesh_antenna}. The corresponding degrees of freedom in the finite element solution are then selected merely by setting to one the 
corresponding diagonal entries of the matrix ${\mathbf P}$ (see Section 3.2).

For the time discretization, we choose the third-order LTS-AB$3$($7$) time-stepping scheme with $p = 7$, which for every time-step $\Delta t$ takes seven local 
time-steps inside the refined region that surrounds the antenna (see Fig.~\ref{mesh_antenna}). Thus, the numerical method is third-order accurate in both space 
and time under the CFL condition $\Delta t = 0.07 \, h^{\mbox{\scriptsize coarse}}$, determined experimentally. If instead the same (global) time-step 
$\Delta t$ was used everywhere inside $\Omega$, it would have to be about seven times smaller than necessary in most of $\Omega$. As a starting procedure, 
we employ a standard fourth-order Runge-Kutta scheme.

In Fig.~\ref{antenna_sol}, snapshots of the numerical solution are shown at different times. The vertical Gaussian pulse initiates two plane waves propagating 
in opposite directions. At $t=0.5$, the right-moving wave impinges first on the antenna and than on the tip of roof. Multiple reflections occur as the lower part 
of the wave bounces back. Meanwhile, the upper part of the plane wave has proceeded to the right without any spurious reflection between the coarse and the refined 
regions.

\begin{figure}[t]
\begin{center}
\includegraphics[width=.45\textwidth]{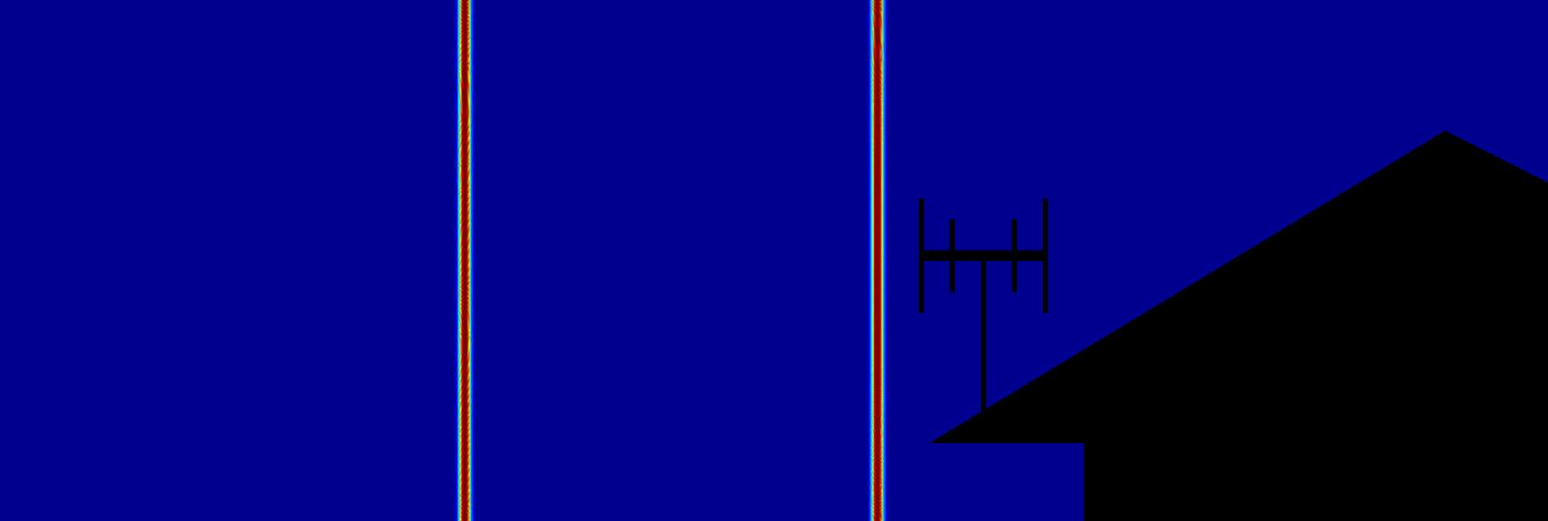} \hspace{1cm}
\includegraphics[width=.45\textwidth]{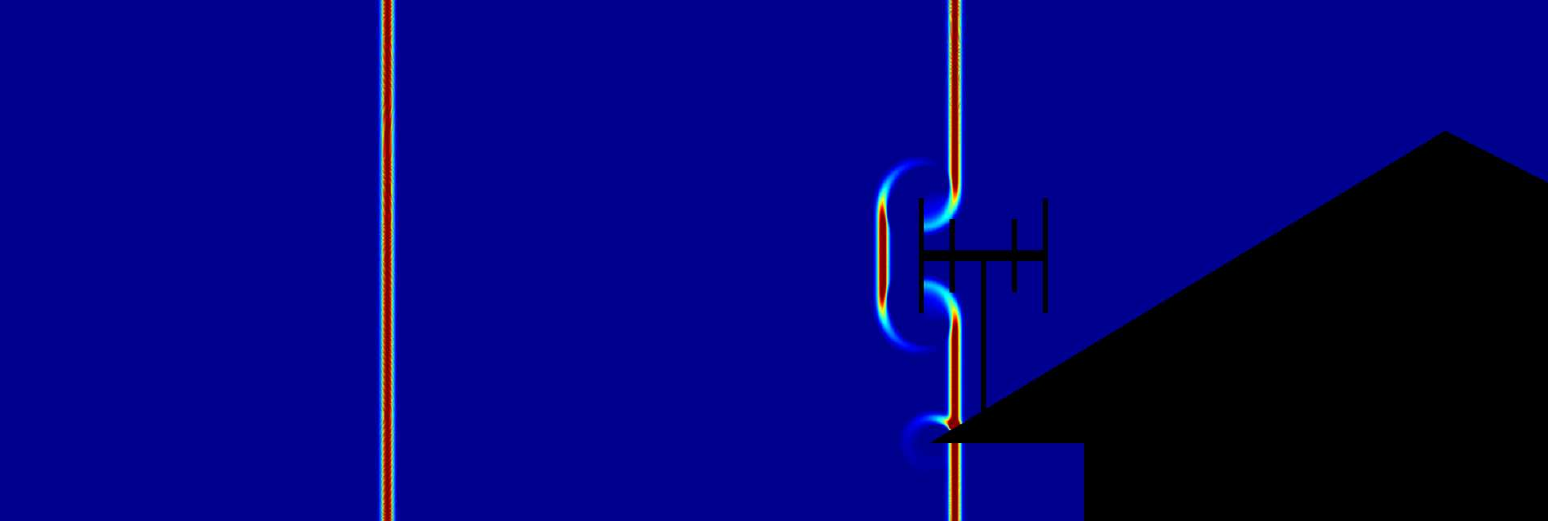}

\bigskip

\includegraphics[width=.45\textwidth]{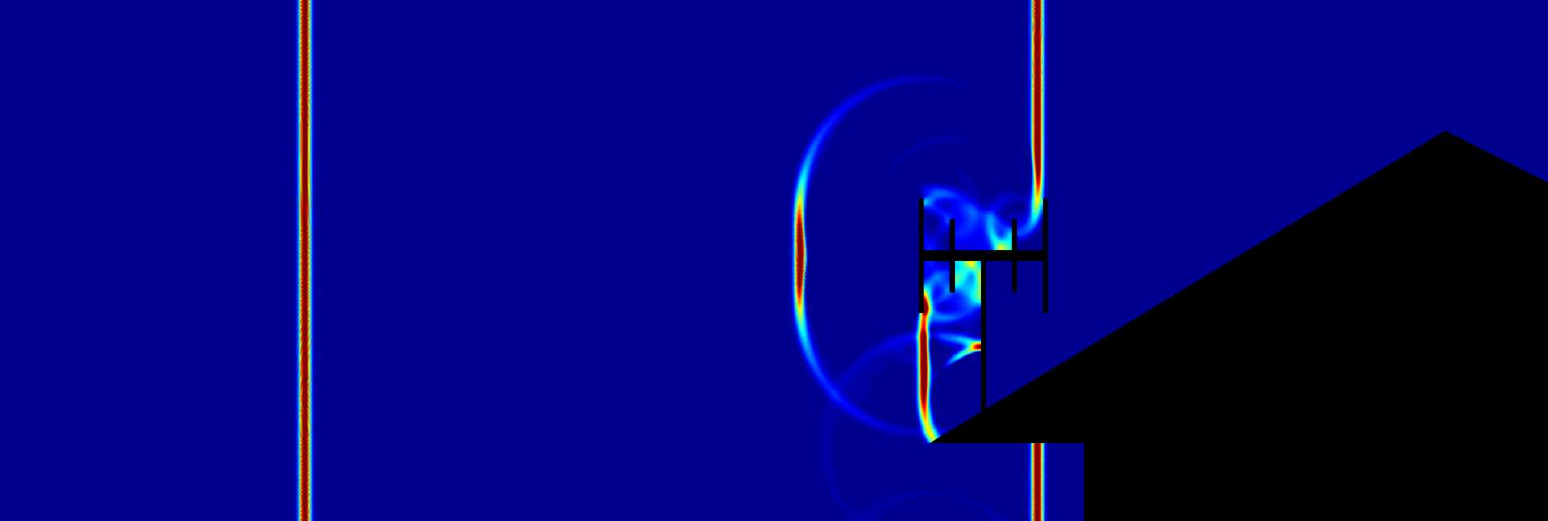} \hspace{1cm}
\includegraphics[width=.45\textwidth]{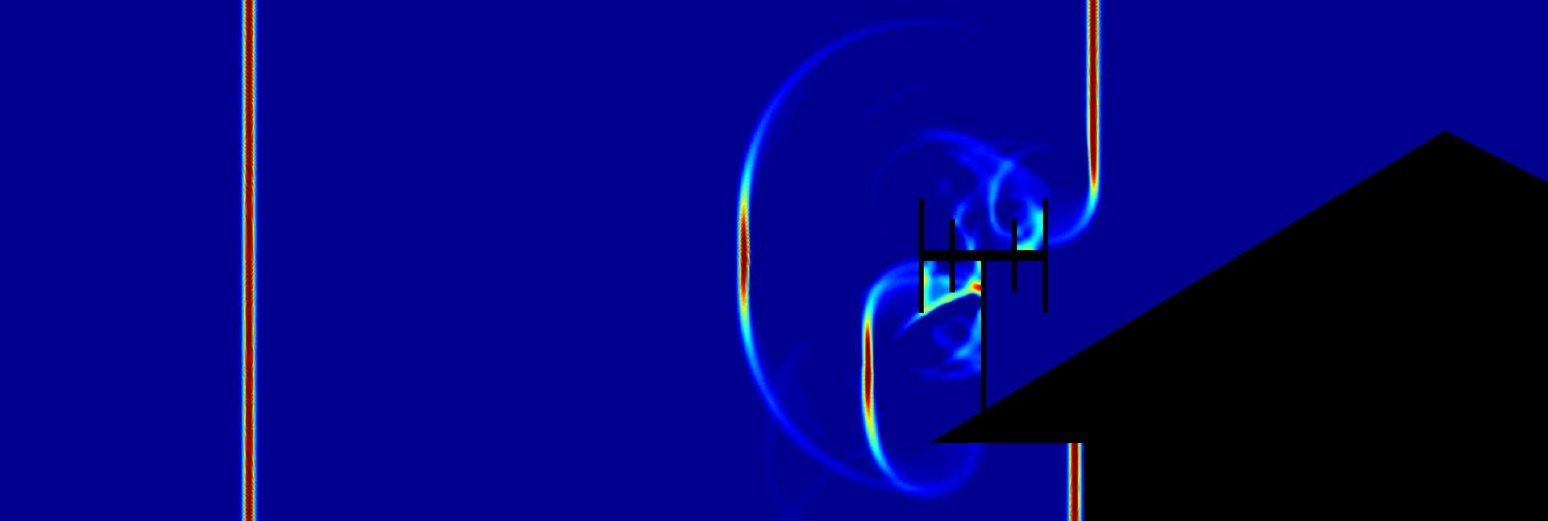}

\bigskip

\includegraphics[width=.45\textwidth]{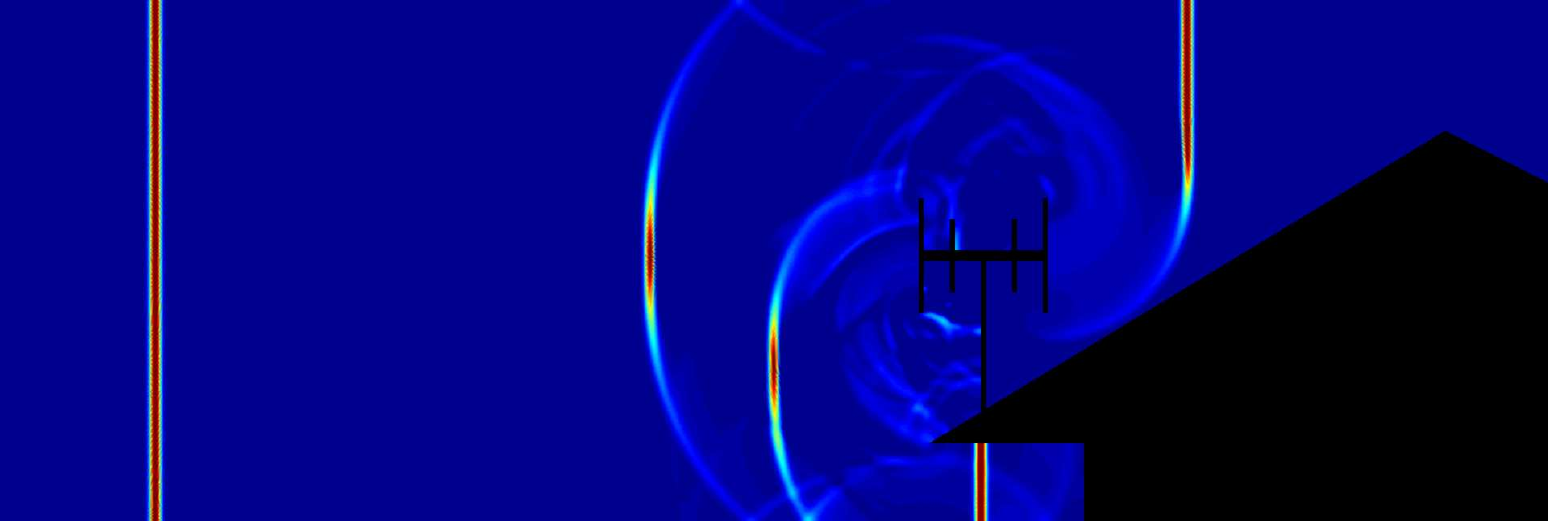} \hspace{1cm}
\includegraphics[width=.45\textwidth]{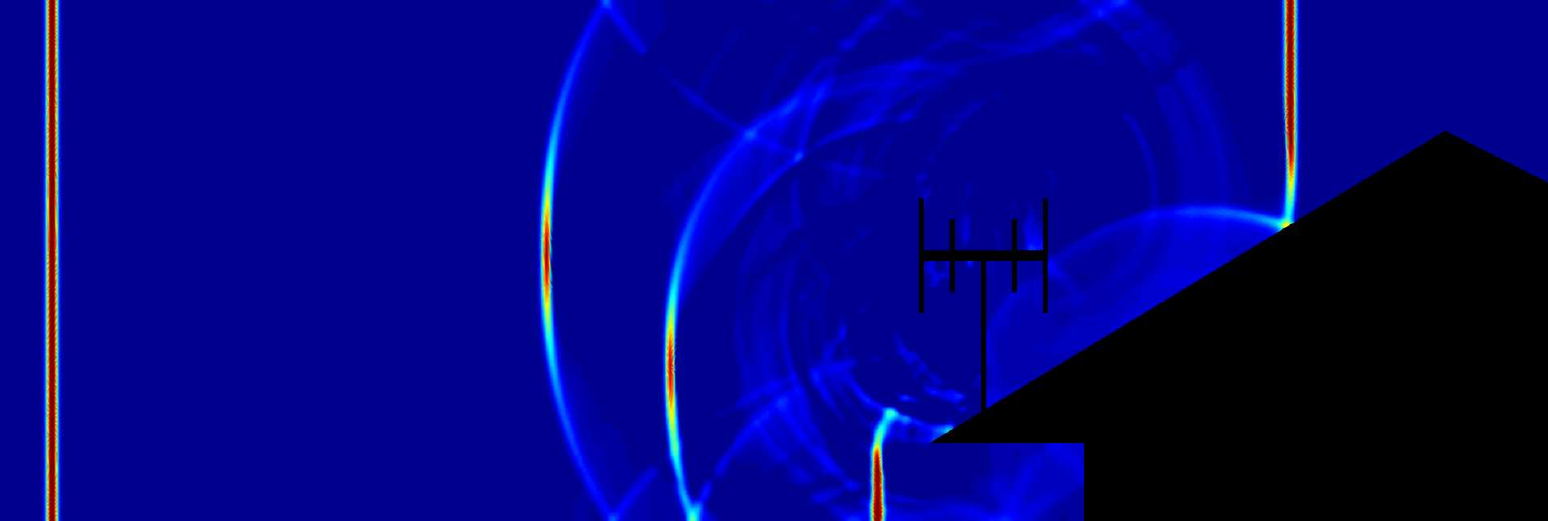}
\end{center}
\caption{Two-dimensional example: the solution is shown at times $t=$0.4, 0.55, 0.71, 0.82, 1 and 1.2.} \label{antenna_sol}
\end{figure}

%%%%%%%%%%%%%%%%%%%%%%%%%%%%%%%%%%%%%%%%%%%%%%%%%%%%%%%%%%%%%%%%%%%%%%%%%%%%%%%%%%%%%%%%%%%%%%%%%%%%%%%%%%%%%%%%%%%%%%%%%%%
%%%%%%% Conclusion (Section 5) %%%%%%%%%%%%%%%%%%%%%%%%%%%%%%%%%%%%%%%%%%%%%%%%%%%%%%%%%%%%%%%%%%%%%%%%%%%%%%%%%%%%%%%%%%%%
\section{Conclusion}
Starting from classical Adams-Bashforth methods, we have presented explicit local time-stepping (LTS) schemes 
for damped wave equations, which permit arbitrarily small time-steps precisely where the smallest elements 
in the mesh are located. Thus, when combined with a
spatial finite element discretization with an essentially diagonal mass matrix, 
the resulting time-marching schemes are fully explicit. 
The general $k$-th order LTS scheme, denoted by LTS-AB$k$, is given by the LTS-AB$k$($p)$ Algorithm 
in Section 3.2. As shown in Section 3.4, it is 
$k$-th order accurate in time. Thus when combined with a spatial finite element discretization of order 
$k-1$, the resulting numerical scheme yields optimal $k$-th order space-time convergence in the $L^2$-norm. 
The derivation of the LTS-AB$k$($p)$ algorithm immediately generalizes to
the inhomogeneous case with nonzero external forcing. 

The LTS-AB$k$($p)$ scheme has been combined with three distinct finite element discretizations: 
standard $H^1$-conforming finite elements, 
an IP-DG formulation, and nodal DG finite elements.
In all cases, our numerical results demonstrate that the resulting LTS-AB$k$ schemes of order $k \geq 3$ 
have optimal CFL stability properties regardless of the mesh size $h$, the global to local step-size ratio $p$,
or the dissipation $\sigma$. For $k=2$, however, the CFL condition is sub-optimal as the maximal 
time-step allowed by the LTS-AB2 scheme is only about 80\% of that permitted by the standard second-order
Adams-Bashforth scheme on an equidistant mesh.
In contrast to the energy conserving LTS methods that were developed for the (undamped) wave equation in
\cite{DG09}, the LTS-AB$k$ methods with $k\geq 3$ always achieve the optimal CFL condition without 
overlap of the fine and the coarse region. In fact, they even do so in the undamped regime; hence, the
LTS-AB$k$ methods with $k\geq 3$ can even be used when $\sigma$ vanishes locally or throughout $\Omega$,
yet even in the absence of damping and forcing, they will not conserve the energy.

Since the LTS methods presented here are truly explicit, their parallel implementation is straightforward. Let $\Delta t$ denote the 
time-step imposed by the CFL condition in the coarser part of the mesh. Then, during every (global) time-step $\Delta t$, each local time-step of size 
$\Delta t / p$ inside the fine region of the mesh, simply corresponds to sparse matrix-vector multiplications that only involve the degrees of freedom
associated with the fine region of the mesh. Those ``fine'' degrees of freedom can be selected individually and without any restriction by setting the 
corresponding entries in the diagonal projection matrix $P$ to one; in particular, no adjacency or coherence in the numbering of the degrees of freedom is assumed. 
Hence the implementation is straightforward and requires no special data structures.  

In the presence of multi-level mesh refinement, each local time-step in the fine
region can itself include further local time-steps inside a smaller subregion with an even higher degree of local mesh refinement.
The explicit local time-stepping schemes developed here for the scalar damped wave 
equation immediately apply to other 
damped wave equations, such as in electromagnetics 
or elasticity; in fact, they can be used for general linear first-order hyperbolic systems.

%%%%%%%%%%%%%%%%%%%%%%%%%%%%%%%%%%%%%%%%%%%%%%%%%%%%%%%%%%%%%%%%%%%%%%%%%%%%%%%%%%%%%%%%%%%%%%%%%%%%%%%%%%%%%%%%%%%%%%%%%%%
%%%%%%% Acknowledgements %%%%%%%%%%%%%%%%%%%%%%%%%%%%%%%%%%%%%%%%%%%%%%%%%%%%%%%%%%%%%%%%%%%%%%%%%%%%%%%%%%%%%%%%%%%%%%%%%%
\section*{Acknowledgements}
We thank Manuela Utzinger and Max Rietmann for their help with the numerical experiments.

%%%%%%%%%%%%%%%%%%%%%%%%%%%%%%%%%%%%%%%%%%%%%%%%%%%%%%%%%%%%%%%%%%%%%%%%%%%%%%%%%%%%%%%%%%%%%%%%%%%%%%%%%%%%%%%%%%%%%%%%%%%
%%%%%%% Bibliography %%%%%%%%%%%%%%%%%%%%%%%%%%%%%%%%%%%%%%%%%%%%%%%%%%%%%%%%%%%%%%%%%%%%%%%%%%%%%%%%%%%%%%%%%%%%%%%%%%%%%%


\begin{thebibliography}{00}
\bibitem{Ciar78} P. Ciarlet, The Finite Element Method for Elliptic Problems, North-Holland, Amsterdam, 1978. 
%%%
\bibitem{Hug00} T. Hughes, The finite element method : linear static and dynamic finite element analysis, Dover Publications, 2000.
%%%
\bibitem{CJRT01} G. Cohen, P. Joly, J. Roberts, N. Tordjman, Higher order triangular finite elements with mass lumping for the wave equation,
SIAM J. Numer. Anal. 38 (2001) 2047--2078.
%%%
\bibitem{GT06} F.~X. Giraldo, M.~A. Taylor, A diagonal-mass-matrix triangular-spectral-element method based on cubature points, 
J. Engrg. Math. 56 (2006) 307--322.
%%%
\bibitem{CS89} B. Cockburn, C.-W. Shu, TVB Runge-Kutta local projection discontinuous Galerkin method for conservation laws II: General 
framework, Math. Comp. 52 (1989) 411--435.
%%%
\bibitem{RW03} B. Riviere, M.~F.~Wheeler, Discontinuous Finite Element Methods for Acoustic and Elastic Wave Problems, 
Contemporary Mathematics 329 (2003) 271--282.
%%%
\bibitem{AMM06} M.~Ainsworth, P.~Monk, W.~Muniz, Dispersive and dissipative properties of discontinuous {G}alerkin finite element methods for the 
second-order wave equation, J. Sci. Comput. 27 (2006) 5--40.
%%%
\bibitem{GSS06} M.~J. Grote, A.~Schneebeli, D.~Sch{\"o}tzau, Discontinuous {G}alerkin finite element method for the wave equation, 
SIAM J.~Numer.~Anal. 44 (2006) 2408--2431.
%%%
\bibitem{GSS07} M.~J. Grote, A.~Schneebeli, D.~Sch{\"o}tzau, Interior penalty discontinuous {G}alerkin method for 
{M}axwell's equations: Energy norm error estimates, J. Comput. Appl. Math. 204 (2007) 375--386.
%%%
\bibitem{HW08} J.S. Hesthaven, T. Warburton, Nodal Discontinuous Galerkin Methods, Springer, 2008.
%%%
\bibitem{CFJ03} F. Collino, T. Fouquet, P. Joly, A conservative space-time mesh refinement method for the 1-D wave equation. I. Construction, 
Numer. Math. 95 (2003) 197--221.
%%%
\bibitem{CFJ06} F. Collino, T. Fouquet, P. Joly, Conservative space-time mesh refinement method for the FDTD solution of Maxwell's equations, 
J. Comput. Phys. 211 (2006) 9--35.
%%%
\bibitem{DFFL10} V. Dolean, H. Fahs, L. Fezoui, S. Lanteri, Locally implicit discontinuous Galerkin method for time domain electromagnetics,
J. Comput. Phys. 229 (2010) 512--526.
%%%
\bibitem{Pip06} S. Piperno, Symplectic local time-stepping in non-dissipative DGTD methods applied to wave propagation problems, 
M2AN Math. Model. Numer. Anal. 40 (2006) 815--841.
%%%
\bibitem{MPFC08} E. Montseny, S. Pernet, X. Ferri\'{e}res, G. Cohen, Dissipative terms and local time-stepping improvements in a spatial high order Discontinuous 
Galerkin scheme for the time-domain Maxwell's equations, J. Comp. Physics 227 (2008) 6795--6820.
%%%
\bibitem{TDMS09} A. Taube, M. Dumbser, C.-D. Munz, R. Schneider, A high-order discontinuous Galerkin method with time-accurate local time stepping for the Maxwell equations, 
Int. J. Numer. Model. 22 (2009) 77--103.
%%%
\bibitem{DKT07} M. Dumbser, M. K\"aser, E. Toro, An arbitrary high-order Discontinuous Galerkin method for elastic waves on unstructured 
meshes - V. Local time stepping and $p$-adaptivity, Geophys. J. Int. 171 (2007) 695--717.
%%%
\bibitem{EJ10} A. Ezziani, P. Joly, Local time stepping and discontinuous Galerkin methods for symmetric first order hyperbolic systems, 
J. Comput. Appl. Math. 234 (2010) 1886--1895.
%%%
\bibitem{CS07} E. Constantinescu, A. Sandu, Multirate timestepping methods for hyperbolic conservation laws, J. Sci. Comput. 33 (2007) 239--278.
%%%
\bibitem{CS09} E. Constantinescu, A. Sandu, Multirate explicit Adams methods for time integration of conservation laws, J. Sci. Comput. 38 (2009) 229--249.
%%%
\bibitem{HO11} M. Hochbruck, A. Ostermann, Exponential multistep methods of Adams-type, BIT 51 (2011) 889--908. 
%%%
\bibitem{DG09} J. Diaz, M.J. Grote, Energy conserving explicit local time-stepping for second-order wave equations, SIAM J. Sci. Comput. 31 (2009) 1985--2014.
%%%
\bibitem{GM10} M.J. Grote, T. Mitkova, Explicit local time-stepping for Maxwell's equations, J. Comput. App. Math. 234 (2010) 3283--3302.
%%%
\bibitem{LM72} J.L. Lions, E. Magenes, Non-homogeneous Boundary Value Problems and Applications, 
Vol. I, Springer-Verlag, 1972.
%%%
\bibitem{Coh02} G. Cohen, High-Order Numerical Methods for Transient Wave Equations, Springer-Verlag, 2002.
%%%
\bibitem{CJT93} G. Cohen, P. Joly, N. Tordjman, Construction and analysis of higher order finite elements with mass lumping for wave
equation, in Proceedings of Second International Conference on Mathematical and Numerical Aspects of Wave Propagation Phenomena,
SIAM, 1993, pp. 152--160.
%%%
\bibitem{CJT94} G. Cohen, P. Joly, N. Tordjman, Higher-order finite elements with mass-lumping for the 1D wave equation,
Finite Elem. Anal. Des. 16 (1994) 329--336.
%%%%
\bibitem{BD76} Baker, V. Douglas, The effect of quadrature errors on finite element approximations for second order hyperbolic equations, 
SIAM J. Numer. Anal. 13 (1976) 577--598.
%%%
\bibitem{BPW09} A. Buffa, I. Perugia, T. Warburton, The Mortar-Discontinuous Galerkin method for the 2D Maxwell eigenproblem, 
J. Sci. Comp. 40 (2009) 86--114.
%%%
\bibitem{ABCM01} D.~N. Arnold, F.~Brezzi, B.~Cockburn, and L.~D. Marini, Unified analysis of discontinuous {G}alerkin methods for 
elliptic problems, SIAM J.~Numer.~Anal. 39 (2002) 1749--1779.
%%%
\bibitem{AD10} C.~Agut, J.~Diaz, Stability analysis of the Interior Penalty Discontinuous Galerkin method for the wave equation, 
INRIA Research Report 7494, 2010.
%%%
\bibitem{GSS08} M.~J. Grote, A.~Schneebeli, and D.~Sch{\"o}tzau, Interior penalty discontinuous {G}alerkin method for {M}axwell's 
equations: Optimal ${L}^2$-norm error estimates, IMA J.~Numer.~Anal. 28 (2008) 440--468.
%%%
\bibitem{GS09} M.J. Grote, D. Sch\"otzau, Optimal Error Estimates for the Fully Discrete Interior Penalty DG Method 
for the Wave Equation, J. Sci. Comput. 40 (2009) 257--272.
%%%
\bibitem{HNW00} E. Hairer, S. N\o rsett, G. Wanner, Solving Ordinary Differential Equations I: Nonstiff Problems, Springer, 2000.
\end{thebibliography}
\end{document}